  \renewcommand{\setminus}{\mathbin{\backslash}}%
\newtheorem{thm}{Theorem}
\numberwithin{thm}{section}
\Crefname{thm}{Theorem}{Theorems}
\newtheorem{lem}[thm]{Lemma}
\Crefname{lem}{Lemma}{Lemmata}
\newtheorem{prp}[thm]{Proposition}
\Crefname{prp}{Proposition}{Propositions}
\newtheorem{cor}[thm]{Corollary}
\newtheorem*{claim*}{Claim}
\theoremstyle{definition}
\newtheorem{defn}[thm]{Definition}
\newtheorem{ex}[thm]{Example}
\newtheorem{problem}[thm]{Problem}
\theoremstyle{remark}
\newtheorem*{rem}{Remark}
\newtheorem*{notation}{Notation}
\newcommand{\union}{\cup}
\newcommand{\inter}{\cap}
\newcommand{\mc}{\mathcal}
\renewcommand{\H}{\mathbb{H}}
\newcommand{\R}{\mathbb{R}}
\newcommand{\C}{\mathbb{C}}
\newcommand{\Z}{\mathbb{Z}}
\newcommand{\Q}{\mathbb{Q}}
\newcommand{\fib}{\mathsf{fib}}
\DeclareMathOperator{\PSL}{PSL}
\DeclareMathOperator{\SL}{SL}
\DeclareMathOperator{\Mod}{Mod}
\DeclareMathOperator{\ceil}{ceil}
\DeclareMathOperator{\floor}{floor}
\title{The combinatorics of Farey words and their traces}
\author{Alex Elzenaar}
\author{Gaven Martin}
\author{Jeroen Schillewaert}
\address[AE]{Max Planck Institute for Mathematics in the Sciences, Leipzig, Germany}
\address[GM]{Institute for Advanced Study, Massey University, Auckland, New Zealand}
\address[JS]{Department of Mathematics, The University of Auckland, New Zealand}
\email[AE]{aelz176@aucklanduni.ac.nz}
\email[GM]{g.j.martin@massey.ac.nz}
\email[JS]{j.schillewaert@auckland.ac.nz}
\keywords{Kleinian groups, Schottky groups, two-bridge links, mapping class groups, trace identities, Stern-Brocot tree, Farey sequences, recurrence relations}
\subjclass[2020]{11B37, 11B39, 11B57, 20F36, 57K20 (primary), 32G15, 30F40, 33C45, 41A10 (secondary)}
\begin{document}
\begin{abstract}
We introduce a family of $3$-variable ``Farey polynomials'' that are closely connected with the geometry and topology of $3$-manifolds and orbifolds as they can be used to produce concrete realisations of the boundaries and local coordinates for one-dimensional (over $ \C $) deformation spaces of Kleinian groups.  As such, this family of polynomials has a number of quite remarkable properties. We study these polynomials from an abstract combinatorial viewpoint, including a recursive definition extending that which is known in the literature for the special case of manifolds, even beyond what the geometry predicts. We also present some intriguing examples and conjectures which we would like to bring to the attention of researchers interested in algebraic combinatorics and hypergeometric functions.

The results in this paper additionally provide a practical approach to various classification problems for rank-two subgroups of $\PSL(2,\C)$  since they, together with other recent work of the authors, make it possible to provide certificates that certain groups are discrete and free,  and effective ways to identify relators.
\end{abstract}

\maketitle

\section{Introduction}
We introduce a fascinating family of $3$-variable polynomials $\Phi^{a,b}_{p/q}(z)$ defined for all $ a, b \in \C \union \{\infty\} $ (though the geometric meaning of these polynomials only makes sense for $ a $ and $ b $ that are integers that are at least $2$ or $ \infty $ and which are not both $ 2 $) and all rational numbers $p/q \in \Q \union \{\infty\}$. The goal of this paper is to study the abstract combinatorial properties of these polynomials, which we call \textbf{Farey polynomials}. These polynomials have been studied before in the case $ a = b = \infty $, since they appear naturally in studies of the Riley slice of Schottky space, a much studied model for one complex dimensional deformation spaces of Kleinian groups; even in that case many of our results or their proofs seem to be new.

We will now explain the mathematical background behind this paper in more detail.
A \textbf{Kleinian group} is a discrete subgroup of $ \PSL(2,\C) $; these groups have been intensively studied for a long time in association with hyperbolic geometry and
conformal geometry, because they act naturally on hyperbolic space $ \H^3 $ as isometries, and on its visual boundary $ \partial \H^3 = \C \union \{\infty\} $ as conformal
maps. Given a Kleinian group $ G $, let $ \Omega(G) $ be its domain of discontinuity in $ \C \union \{\infty\} $; then $ \H^3/G $ is a hyperbolic 3-fold and, if $ \Omega(G) \neq \emptyset $,
then $ \Omega(G)/G $ is an orbifold surface which can be identified with the boundary of this 3-fold. We assume that the reader is familiar with the basic theory of M\"obius transformations;
all standard terms which we use but do not define may be found in the textbook \autocite{beardon}.

Up to conjugacy in $ \PSL(2,\C) $, every group on two parabolic generators may be written as
\begin{displaymath}
  \Gamma = \left\langle X = \begin{bmatrix} 1&1 \\ 0&1 \end{bmatrix}, Y = \begin{bmatrix} 1 & 0 \\ z & 1 \end{bmatrix} \right\rangle;
\end{displaymath}
Much interest has surrounded the behaviour of these groups for different $ z $. When $ \abs{z} > 4 $, this group is discrete and free;
however, there are many $ z $ inside the radius $4$ circle which also give free and discrete groups. The set of all $ z $ for which $ \Gamma $
is free, discrete, and non-elementary is a connected subset of $ \hat{\C} $; in fact, it is isotopic to the punctured disc $ \abs{z} \geq 4 $. The interior of
this set is called the \textbf{Riley slice}, and we discuss the geometric interest of this set in \cref{sec:context}. However, for the remainder
of this paper, the reader need only know that ``$ z $ is in the Riley slice'' means that the group $ \Gamma $ is discrete, free, and has
quotients $ \H^3/\Gamma $ and $ \Omega(\Gamma)/\Gamma $ homeomorphic to respectively a 3-ball with two tubes drilled out and a sphere with four punctures.

Punctured spheres can be viewed is a limiting case of cone-pointed spheres as the cone angles tend to zero. It is therefore natural to also study the case of hyperbolic orbifolds that
are 3-balls with two finite-order singular arcs (so with conformal boundaries that are spheres with four cone points joined in pairs). This corresponds to replacing the parabolic generators
of $ \Gamma $ with elliptics, producing the three-parameter family of groups
\begin{equation}\label{eq:rileygp}
  \Gamma_z^{a,b} = \left\langle X = \begin{bmatrix} \alpha&1 \\ 0&\alpha^{-1} \end{bmatrix}, Y_z = \begin{bmatrix} \beta & 0 \\ z & \beta^{-1} \end{bmatrix} \right\rangle;
\end{equation}
where $ \alpha = \exp(\pi i/a) $ and $ \beta = \exp(\pi i/b) $ (so $ X $ and $ Y_z $ have respective orders $ a $ and $ b $) and $ z $ is some element of $ \C $
such that  $ \Gamma_z^{a,b} $ is discrete and isomorphic to $ \Z/a\Z * \Z/b\Z $.

By the standard theory of coverings by discrete groups, elements of $ \Gamma = \Gamma_z^{a,b} $ correspond to free homotopy classes of simple closed curves in the quotient
$ \Omega(\Gamma)/\Gamma$. In this paper we study certain words in the group $ \Gamma $ that are defined combinatorially in \cref{sec:cutting}, called Farey words. We are
motivated to study these words because, for $ z $ in the Riley slice, their conjugacy classes are exactly the conjugacy classes in the group that represent free homotopy
classes of simple non-boundary-parallel curves on the four-times punctured sphere $ \Omega(\Gamma)/\Gamma $ that do not bound compression discs in the 3-manifold when the surface
is viewed as the conformal boundary of $ \H^3/\Gamma $.

\begin{notation}
  In the remainder, we will often talk of curves when we mean free homotopy classes of simple closed curves. All of the formal results we will prove are results on the combinatorics of these free homotopy classes
  themselves (so it is all right to pick any specific curve in the free homotopy class, so long as it is in general position) or on the free groups uniformising the hyperbolic 3-fold (in which case the words represent free homotopy
  classes directly). Note that the surface $ \Omega(\Gamma)/\Gamma $ carries only a conformal metric, and so it is not normally possible to talk about geodesics. However, there is a sense in which there is
  a natural measure of length on these free homotopy classes; this length plays an important role in the broader theory, but here we need not worry about it except when we want to give some intuition: we will occasionally
  say that the length of a curve `goes to zero', and by this we mean only the purely topological fact that in the limit the surface curve becomes boundary parallel (surrounding a puncture).
\end{notation}

The abstract setting in which we work is a formalised version of this setup. Throughout:
\begin{itemize}
  \item $ z \in \C\setminus\{0\} $ is a complex parameter (which we do not assume lies in a Riley slice unless otherwise stated);
  \item $a $ and $ b $ are real numbers (not necessarily integral) or infinity: we write $ a,b \in \hat{\R} \coloneq \R \union \{\infty\} $
        and use `hats' in this way for all subsets of $ \C $, so for instance $ \hat{\C} $ is the Riemann sphere;
  \item $ \alpha = \exp(\pi i/a) $ and $ \beta = \exp(\pi i/b) $;
  \item and $ \Gamma_z^{a,b}  $ is the group generated by the elements $ X $ and $ Y_z $ defined symbolically in \cref{eq:rileygp}.
\end{itemize}
To simplify notation when $ z $ is fixed we often drop the subscripts, so $ Y = Y_z $ and $ \Gamma = \Gamma_z^{a,b} $. We also
use the convention $ x \coloneq X^{-1} $, $ y \coloneq Y^{-1} $ (and we use similar conventions throughout without comment when writing group words).

\subsection{The recursion formula}
Our main result is Theorem \ref{thm:recursion}, which gives a straightforward recursion formula for these Farey polynomials that is independent of the geometry and topology from which they naturally arise. These formulas make it possible to effectively compute deep into rank-two Kleinian groups to look at special words (Farey words) conjecturally describing the relators in a non-free group. Similar recursions for
the commutators $ [X^n,Y]-2 $ and the traces $ \tr^2(X^n)-4 $ were found by Alaqad, Gong, and Martin in \autocite{martin21}, and for the analogues of the
Farey polynomials for the Maskit slice in \autocite[283-285]{indras_pearls}. Links between these Farey words and other combinatorial words which occur in Teichm\"uller theory have been surveyed by Gilman \autocite{gilman06}.

\begin{ex}\label{ex:special_case}
  We describe the interesting special case of our formula where $ \alpha = \beta = \exp(i\theta) $ (so $ a = b = \pi/\theta $: note that they are not necessarily
  integral). Write $ f_{p/q} $ for $ \Phi^{\pi/\theta,\pi/\theta}_{p/q} $. Then our recursion becomes
  \begin{displaymath}
    f_{_{\frac{r_1}{s_1} \oplus \frac{r_2}{s_2} }}(z) = 8 \cos^2(\theta) - f_{_{\frac{r_1}{s_1}}}(z) \; f_{_{\frac{r_2}{s_2}}}(z) -f_{_{\frac{r_1}{s_1} \ominus \frac{r_2}{s_2} }}(z)  \\
  \end{displaymath}
  with base cases $ f_{0/1}(z) = 2-z $ and $ f_{1/1}(z) = z+2\cos(2\theta) $.

  One expects that the Chebyshev polynomials should appear,  and indeed  it is not difficult to see that
  \begin{displaymath}
    \Phi_{1/n}^{\infty,\infty}(z)=\frac{2}{z-4}\left( T_n\left(\frac{z}{2}-1\right)z-4\right)
  \end{displaymath}
  where $T_n$ is the $n$th Chebyshev polynomial of the first kind, see \cref{sec:properties}. More generally, for other values of $\theta$,
  \begin{displaymath}
    f_{1/n}(z) = \frac{1}{z-4}\left(2 T_n\left(\frac{z}{2}-1\right)(z-4\sin^2(\theta))-8\cos^2(\theta)\right).
  \end{displaymath}
\end{ex}

\subsection{Structure of the paper}
The remainder of this introduction describes some the applications of our results, including some geometric context; it is not necessary for the reader who is only
interested in the abstract combinatorics of the polynomials to read this, and they may skip to \cref{sec:cutting} which gives
a combinatorial definition of the Farey words using cutting sequences; we take care to avoid requiring any knowledge of curve coding. In \cref{sec:farey}
we list some definitions and results about Farey sequences from classical number theory. Our
main result (\cref{thm:recursion}) is proved in \cref{sec:recursion}; in the following section, \cref{sec:commutators}, we use our results to compute the commutators of the Farey words
in a couple of different ways. The final part of the paper consists of two sections; in \cref{sec:properties}, we apply the work of Chesebro and his collaborators \autocite{chesebro19,chesebro20}
to show a formal analogy between the Farey polynomials and the Chebyshev polynomials. Then in \cref{sec:contin_frac} we discuss some applications to the approximation of irrational pleating
rays and cusps, and give some computational results which show that there are interesting connections with dynamical systems and number theory for future work to explore.

\subsection{Applications and connections to geometry and topology}\label{sec:context}
It is the following remarkable connection to three dimensional hyperbolic geometry and potential applications that motivates our study; we give only a very rough idea here, and precise definitions
can be found in our work \autocite{ems21}. Fix $a,b$ integers which are at least $2$ or infinity, not both $2$, and let $\mathcal{R}^{a,b}$ denote the closure of the quasiconformal deformation
space of faithful representations (up to conjugacy) of the free product group $\Z/a\Z * \Z/b\Z$ into ${\PSL}(2,\mathbb{C})$.  There is a natural embedding of $\mathcal{R}^{a,b}$ in the complex plane,
namely the set of $ z \in \C $ such that the group $ \Gamma_z^{a,b} $ is discrete and isomorphic to $ \Z/a\Z * \Z/b\Z $. In this guise, it forms the region of \cref{RileyCusped} (which shows the
case $ a = b = \infty $, though the cases for finite $ a $ and $ b $ look similar) laminated with smooth curves (along with the boundary of that set). The interior of this set is the $(a,b)$-Riley slice, $ \mc{R}^{a,b} $.

\begin{figure}
  \centering
  \scalebox{0.5}{\includegraphics[viewport=-30 330 600 900]{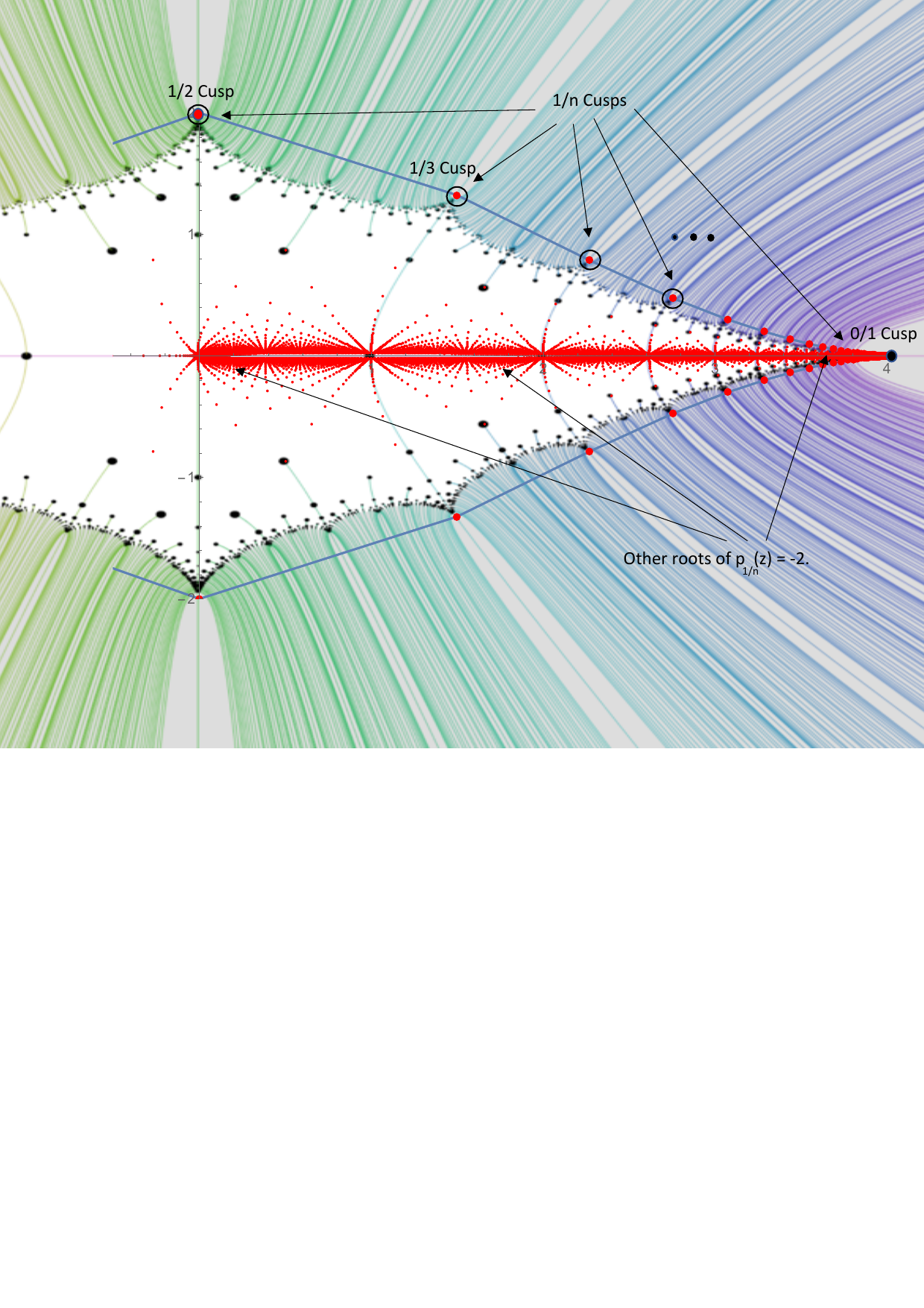}}\\
  \caption[Cusp groups on the boundary of the Riley slice]{Cusp groups on the boundary of the Riley slice $ \mc{R}^{\infty,\infty} $ (courtesy of Y. Yamashita). As $n\to\infty$ the $\frac{1}{n}$-cusp groups accumulate at the $(\infty,\infty,\infty)$-triangle group. The other solutions to $ \Phi^{\infty,\infty}_{1/n}(z)=-2$ are shown in red.\label{RileyCusped}}
\end{figure}

A detailed historical account of the Riley slice, together with background information for the non-expert and motivating applications (including a brief outline of earlier work
on the classification of two-generated arithmetic groups), can be found in \autocite{ems22M}.
Though it was first defined in the mid-20th century, recent work on the Riley slice follows on from a 1994 paper of Linda Keen and Caroline Series \autocite{keen94}
(with some corrections by Yohei Komori and Caroline Series \autocite{komoriseries98}). In this paper, they constructed a foliation of the Riley slice for two parabolic generators $ \mc{R} = \mc{R}^{\infty,\infty} $ via a two-step process:
\begin{enumerate}
  \item Firstly, a lamination of $ \mc{R} $ is defined, with leaves indexed by $ \Q/2\Z $. These leaves consist of certain branches of preimages of $ (-\infty,-2) $ under a family of
        polynomials $ \Phi_{p/q} $ ($p/q \in \Q$) and are called the \textbf{rational pleating rays}.
  \item Then, by a completion construction similar to the completion of $ \Q $ via Dedekind cuts, the lamination is extended to a foliation whose
        leaves are indexed by $ \R/2\Z $. The leaves adjoined at this step are called the \textbf{irrational pleating rays}.
\end{enumerate}
The polynomials $ \Phi_{p/q} $ are constructed as trace polynomials of certain words in the group  $ \Gamma_z $, which we call \textbf{Farey words}; these words
enumerate all but one of the non-boundary-parallel simple closed curves on the four-punctured sphere up to homotopy (the one that is missing is exactly the one bounding
a compression disc in the 3-manifold), and the distinguished branches on the preimages of $ (-\infty,-2) $ correspond to curves in the Riley slice along which the
length of the pleating locus of the convex curve core (which is homotopic in the 3-manifold to the $p/q$ curve on $ \Omega(\Gamma_z)/\Gamma_z $ when $ z $ lies on
the curve in the Riley slice corresponding to the $ p/q $ polynomial) changes in a particularly natural way; in particular, the algebraic limit of $ \Gamma $
 as $ z $ travels down the $ (p/q)$-pleating ray towards the preimage of $ -2 $ has surface quotient equal to a pair of three-times
punctured spheres, where the additional pair of punctures appears as the loxodromic word $ W_{p/q} $ is pinched to a parabolic word with trace $ -2 $.
The group corresponding to this limit is known as a \textbf{cusp group}. The set of points corresponding to cusp groups are dense in the boundary of the
slice by a result of Canary, Culler, Hersonsky, and Shalen \autocite{canary03} and the structure of the cusp points in the boundary was studied in detail by Wright \autocite{wright05}.

Generalising work of Keen, Komori, and Series \autocite{keen94,komoriseries98}, we establish the following analogue for groups with elliptic generators in \autocite{ems22M,elzenaar22,ems22c}.
\begin{thm}\label{thm1}
  The set $\mathcal{R}^{a,b}$ is the interior of the unbounded component of
  \begin{displaymath}
    \mathbb{C}\setminus \overline{\bigcup_{p/q\in[0,1]} \{z: \Phi^{a,b}_{p/q}(z)=-2\}}
  \end{displaymath}
\end{thm}

A theorem of Miyachi and Ohshika \autocite{ohshika10}  informs us that the complement of the Riley slice is a Jordan domain which is not a quasidisc, so one expects a complicated boundary.
A point $z\in \partial \mathcal{R}^{a,b}$ where $\Phi^{a,b}_{p/q}(z)=-2$ is called a \textbf{cusp}, and in fact the Chebyshev polynomial identity in \cref{ex:special_case} can be used to
precisely identify the shape of the most prominent cusp of the moduli space $\mathcal{R}^{a,b}$. Miyachi \autocite{miyachi03} states that there is a ``universal shape'' to the all cusps
in $\mc{R}^{\infty,\infty}$ of the form $y^2=t x^3$,  with $t$ depending on the particular cusp,  and this persists for $\mathcal{R}^{a,b}$ (work in progress based on the current paper).
For example, for the Riley slice at the $0/1$-cusp at $4$ we have $t=\frac{1}{\pi^2}$.

A point to \cref{thm1}, and evidenced by \cref{ex:special_case} where we allowed $ \theta $ to range over values which do not correspond to orbifolds uniformised by discrete groups,
is that the Farey polynomials have allowed us to abstract away the very challenging questions concerning the discreteness of Kleinian groups and replace them with the study of root sets of
recursively defined polynomials to obtain information about the geometry of deformation spaces. These formulae may be used to generate high-resolution pictures of the deformation spaces far
more quickly than working directly from the definition of cutting sequences as it bypasses slow matrix multiplications; a working software implementation of this in \texttt{Python} may be found
online \autocite{Elzenaar_bella}.

We wish to make a few more observations about these Farey polynomials and their geometric interpretations.  The first is that the Farey polynomials are ``trace polynomials''.  The simple closed
curves on the four times punctured sphere which separate one pair of points from another are enumerated by a rational slope $p/q\in [0,1]$.  Each such slope is associated with a so-called \textbf{Farey word}
$ W_{p/q} $ in the fundamental group of the four times punctured sphere which represents the $p/q $ simple closed curve. As noted above, this was used by Keen and Series \autocite{keen94}
in this context and is carefully explained in \cref{sec:cutting} below via cut sequences. Relatedly, these rational slopes and Farey words also enumerate the two-bridge knots and links.
Riley \autocite{riley72} proved that the fundamental group of any hyperbolic two-bridge link has a faithful presentation on the two parabolic generators
\begin{displaymath}
 \begin{bmatrix} 1&1 \\ 0&1  \end{bmatrix} \mbox{ and } \begin{bmatrix}1 & 0 \\ z & 1  \end{bmatrix},
\end{displaymath}
for some $ z \in \C $; that is, these fundamental groups are of the form $ \Gamma^{\infty,\infty}_{z} $ for some $ z $. Farey words can be viewed as relators in the fundamental groups
of the $3$-manifold knot or link complement $\mathbb{S}^3\setminus K$ when the generators are chosen as meridians, and in fact this can be taken as another definition of a Farey
word \autocite{riley72}. In particular when $ z $ is such that $ \Gamma^{\infty,\infty}_z $ is a knot group the word $ W_{p/q} $ is the identiy, hence for the knot or link of slope $p/q$
we have $\Phi^{\infty,\infty}_{p/q}(z)=2$. One might ask what geometric interpretations exist when the parameters $ a, b $ are non-integral.

Far more is true due to the classification theorem \autocite{akiyoshi2020classification} of all discrete groups generated by two parabolics. A result from that paper which illustrates the connection of this classification
to the Farey words which we discuss in this paper is the following:
\begin{thm}\label{fareythm2}
  If $ \Gamma = \Gamma^{\infty,\infty}_z $ is discrete then either $\Gamma$ is free or there is a Farey word $W_{p/q}$ and integers $r$ and $s$ so that
  \begin{displaymath}
  \Phi^{\infty,\infty}_{p/q}(z_0) = 2\cos \frac{r\pi}{s} \in [-2,2].
  \end{displaymath}
\end{thm}
That is, all the discrete groups of the form $ \Gamma_{z}^{\infty,\infty} $ which lie in the Riley slice exterior can be detected by the Farey words as well. In upcoming work, Chesebro,
Schillewaert, and Martin will use similar techniques to prove an analogue of \cref{fareythm2} for groups generated by pairs of elliptic transformations, and so the results of the current
paper apply also to the enumeration of all discrete groups on two elliptic generators. The case of hyperbolic generators (which correspond to genus two surfaces) is much harder. Results
of Thurston and others on the combinatorics of surfaces suggest that there will be analogues of the Farey words in this case, but they will arise in systems of three (so the Farey
polynomials, which are algebraic maps into $ \C $, will become algebraic maps into $ \C^3 $). Our experiments lead us to believe that there are
challenges in this direction that will require new ideas compared to the ideas in this paper, and that the study of the genus two case will be interesting to undertake. As a measure of complexity, in the current
paper we see that the Farey polynomials are strongly related to the group of orientation-preserving mapping classes of the four-punctured sphere. It is well-known that this group embeds into $ \SL(2,\C) $.
The corresponding group for the genus two surface is also linear, but of dimension thirty-two times larger \autocite{bigelow00}.

\subsection{Conclusion}
Taken altogether, these observations concerning the Farey polynomials yield a practical and effective approach to computing in these quasiconformal deformation spaces, see as illustrated in \cref{deformationspaces}. We hope to ultimately describe effective algorithms (perhaps partial algorithms) to solve the following decision problem for rank-two subgroups of $\SL(2,\C)$:

\begin{problem}
  Given two matrices in $\SL(2,\C)$, decide whether the group $G$ they generate is discrete. If $G$ is discrete decide whether it is free or not, and if $G$ is not free effectively produce a relator.
\end{problem}

The combination of the current paper with our paper \autocite{ems21} goes some way towards solving this problem.

\begin{figure}
\centering
\includegraphics[width=0.75\textwidth]{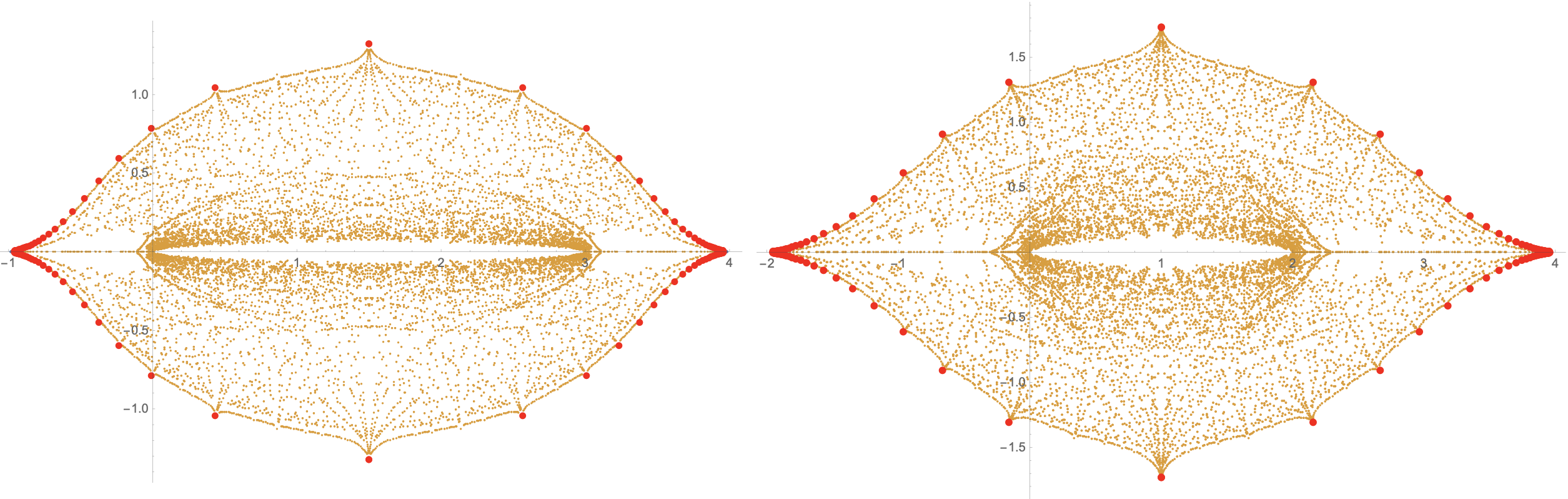}
\caption[Two rapidly computed deformation space complements.]{Two rapidly computed deformation space complements. Left: the quasiconformal deformation space of Kleinian groups generated by two elements of order $3$.  Right: the quasiconformal deformation space of Kleinian groups generated by two elements of order $4$.
\label{deformationspaces}}
\end{figure}

\subsection{Acknowledgements}
We thank the referee for the numerous good suggestions which have improved this paper.
Much of this paper appeared as part of A.E.'s MSc thesis \autocite{elzenaar22}. We would like to thank the organisers of the Groups and Geometry workshop on Waiheke Island, New Zealand, in December, 2021
for the opportunity to speak, and the attendees of that conference for their interesting questions and helpful comments. A.E. also thanks Thu Hien Nguyen for hosting him at the Mathematisches
Forschungsinstitut Oberwolfach during June 2022.

\section{Cutting sequences and Farey words}\label{sec:cutting}
We define the Farey word\footnote{We name these words and the related polynomials after John Farey Sr. as they are closely related to the so-called Farey sequences of rational numbers which we
will discuss briefly later in this paper; with regard to this attribution, we quote from the historical notes to Chapter III of Hardy and Wright \autocite[36--37]{hardywright}: ``The history of
`Farey series' is very curious... [their properties] seem to have been stated and proved first by Haros in 1802... Farey did not publish anything on the subject until 1816. [...] Mathematicians
generally have followed Cauchy's example in attributing the results to Farey, and the series will no doubt continue to bear his name. Farey has a notice of twenty lines in the \textit{Dictionary
of national biography} where he is described as a geologist. As a geologist he is forgotten, and his biographer does not mention the one thing in his life which survives.''} of slope $ p/q $ via
cutting sequences: this is essentially an interpretation of Dehn's classical algorithm for curves on surfaces \autocite[Chapter 6]{stillwell} in the language of symbolic dynamics and mapping class
groups by Birman and Series \autocite{birman-series}, and for the concrete case of interest to us we follow \autocite{seriesESH} and \autocite[\S 2.3]{keen94}.

Since we want to give a purely abstract version of the theory, we work in quite a formal way and do not use the geometric language except in some remarks; for the interested reader,
we describe the geometric motivation briefly now. The reader who is interested in the combinatorics only may skip directly to \cref{sec:formal_farey} where we give
our formalised definition of the Farey words that is the basis for the combinatorial theory in the remainder of the paper.

Consider a group $ \Gamma_z $ such that $ \Omega(\Gamma_z)/\Gamma_z $ is a four-times punctured sphere. Cut along two disjoint arcs on the surface, one for each generator of $ \Gamma $ joining the two images of the fixed point of the generator (i.e. horocyclic arcs); this induces a lift of the quotient surface to $ \Omega(\Gamma) $ which is bounded by four curves (not circular arcs in general) paired by the group generators and therefore we obtain a
tiling of $ \Omega{\Gamma} $ where each edge has two labels, one from each tile incident to the edge. We cut again along an arc
joining the fixed points of the two generators; the resulting object is a hexagon with sides identified in pairs, and this object tiles $ \R^2 $ via the edge pairings as shown in \cref{fig:fundamental_domain};
simple closed curves on the four-times punctured sphere lift to lines on $ \R^2 $ that miss the vertices of these hexagons, and the word representing the curve is read off by listing the edges that the line
crosses (always listing the label on a consistent side of each edge). The result is called a \textbf{cutting sequence} for the curve. There is a similar picture in the case of elliptic generators, but if
we restrict to simple closed curves up to orbifold homotopy we get the same result.

This procedure actually induces a bijection between elements of $ \hat{\Q} $ and non-boundary-parallel simple geodesics of minimal length on the four-punctured sphere $ S_{0,4} $ (given any particular choice of
hyperbolic structure): there is a unique such geodesic in the free homotopy class of the projection from $ \R^2 $ of a line of slope $ p/q $ (where the line is chosen so that it does not meet the lattice $ \Z^2 $;
the choice of line does not matter). The vertical line (the one with slope $ 1/0 $) does not meet any of the labelled edges, and so is given the `empty' cutting sequence that corresponds to the trivial word. This is the geodesic in
the unique free homotopy class of non-boundary-parallel curves on $ \Omega(\Gamma)/\Gamma $ that are isotopic to the identity in $ \H^3/\Gamma $.

\begin{figure}
  \centering
  \includegraphics[width=\textwidth]{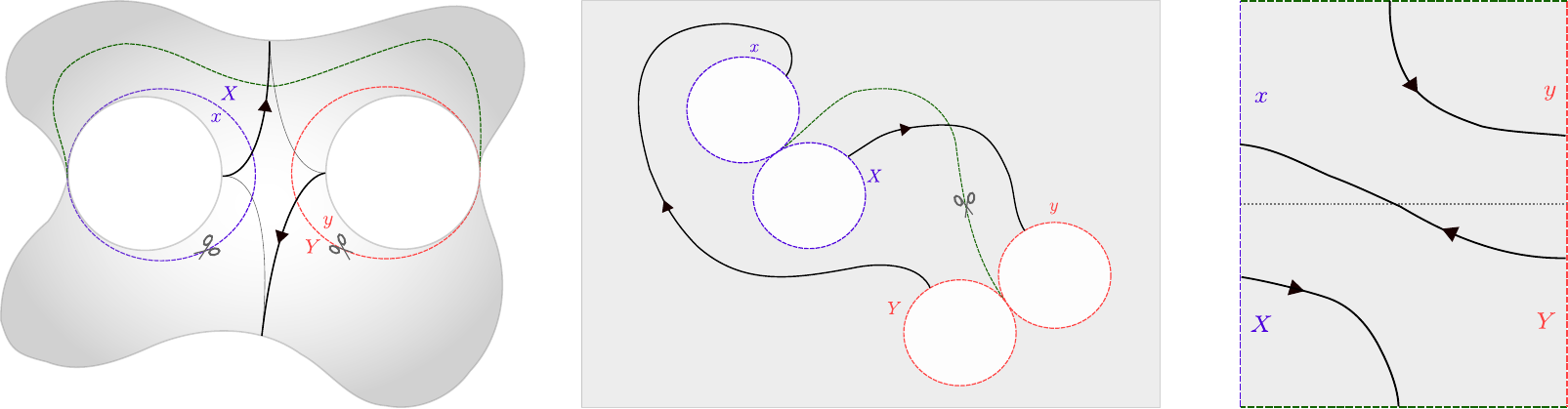}
  \caption{Geodesic coding of free homotopy classes of simple closed curves on the four-punctured sphere.\label{fig:fundamental_domain}}
\end{figure}

\subsection{Formalised definition of the Farey words}\label{sec:formal_farey}
Fix a rational number $ p/q $. Consider the marked tiling of $ \R^2 $ shown in \cref{fig:tiling}, and let $ L_{p/q} $ be the line through $ (0,0) $ of slope $ p/q $.
The \textbf{Farey word of slope $ p/q $},  $ W_{p/q} $, is the word of length $ 2q $ such that the $k$th letter is the label on the right-hand side of the $k$th vertical line segment
crossed by $ L_{p/q} $ (i.e. the label to the right of the point $ (p/q) k $). If $ (p/q) k $ is a point with integer coordinates then this definition is ambiguous and by convention we take the label
on the north-east side (we `nudge' the sloped line up around the lattice points). In other words, the symbols alternate between $ Y^{\pm 1} $ and $ X^{\pm 1} $, with the $k$th exponent given by
\begin{gather*}
  -1^{\ceil [(p/q)k]} \text{ if $k$ is odd (corresponding to $ Y$'s)},\\
  -1^{1+\ceil [(p/q)k]} \text{ if $k$ is even (corresponding to $X$'s)}
\end{gather*}
in which the ceiling is taken with the convention that $ \ceil n = n+1 $ for integral $ n $. By convention, we also define $ W_{1/0} \coloneq 1 $.

\begin{figure}
  \centering
  \includegraphics[width=0.5\textwidth]{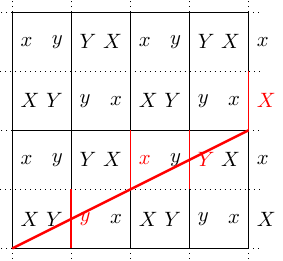}
  \caption[Another view of the cutting sequence for $ W_{1/2}$.]{The cutting sequence of the $1/2$ Farey word.\label{fig:tiling}}
\end{figure}

\begin{ex}
  As an example of the construction process, from \cref{fig:tiling} we can read off that $ W_{1/2} = yxYX $; we include a list of the Farey words for small $q$ as \cref{tab:words}.
\end{ex}

\begin{table}
  \centering
  \caption{Farey words $ W_{p/q} $ and $ W_{-p/q} $ for small $ q $. The vertical bars splitting the words are explained in \cref{lem:top_bottom_reflection}.\label{tab:words}}
  \begin{tabular}{lll}
    \toprule
    $p/q$ & $ W_{p/q} $ & $ W_{-p/q} $\\%
    \midrule
    $0/1$ & $|yX$ & $yX$\\
    $1/1$ & $|YX$ & $YX$\\
    $1/3$ & $yX|YxYX$ & $YxYXyX$\\
    $2/3$ & $yxyX|YX$ & $YXyxyX$\\
    $1/5$ & $yXyX|YxYxYX$ & $YxYxYXyXyX$\\
    $2/5$ & $yX|YxyXyxYX$ & $YxyXyxYXyX$\\
    $3/5$ & $yxYXYxyX|YX$ & $YXyxYXYxyX$\\
    $4/5$ & $yxyxyX|YXYX$ & $YXYXyxyxyX$\\
    $1/7$ & $yXyXyX|YxYxYxYX$ & $YxYxYxYXyXyXyX$\\
    $2/7$ & $yXyxYxyXyX|YxYX$ & $YxYXyXyxYxyXyX$\\
    $3/7$ & $yX|YxyXYxYXyxYX$ & $YxyXYxYXyxYXyX$\\
    $4/7$ & $yxYXyxyXYxyX|YX$ & $YXyxYXyxyXYxyX$\\
    $5/7$ & $yxyX|YXYxyxYXYX$ & $YXYxyxYXYXyxyX$\\
    $6/7$ & $yxyxyxyX|YXYXYX$ & $YXYXYXyxyxyxyX$\\
    $1/9$ & $yXyXyXyX|YxYxYxYxYX$ & $YxYxYxYxYXyXyXyXyX$\\
    $2/9$ & $yXyX|YxYxyXyXyxYxYX$ & $YxYxyXyXyxYxYXyXyX$\\
    $4/9$ & $yX|YxyXYxyXyxYXyxYX$ & $YxyXYxyXyxYXyxYXyX$\\
    $5/9$ & $yxYXyxYXYxyXYxyX|YX$ & $YXyxYXyxYXYxyXYxyX$\\
    $7/9$ & $yxyxYXYXYxyxyX|YXYX$ & $YXYXyxyxYXYXYxyxyX$\\
    $8/9$ & $yxyxyxyxyX|YXYXYXYX$ & $YXYXYXYXyxyxyxyxyX$\\%
    \midrule
    $1/2$ & $y|xYX$ & $YxyX$\\
    $1/4$ & $yXy|xYxYX$ & $YxYxyXyX$\\
    $3/4$ & $yxy|xYXYX$ & $YXYxyxyX$\\
    $1/6$ & $yXyXy|xYxYxYX$ & $YxYxYxyXyXyX$\\
    $5/6$ & $yxyxy|xYXYXYX$ & $YXYXYxyxyxyX$\\
    $1/8$ & $yXyXyXy|xYxYxYxYX$ & $YxYxYxYxyXyXyXyX$\\
    $3/8$ & $yXYxYXy|xYxyXyxYX$ & $YxyXyxYxyXYxYXyX$\\
    $5/8$ & $yxYXYxy|xYXyxyXYX$ & $YXyxyXYxyxYXYxyX$\\
    $7/8$ & $yxyxyxy|xYXYXYXYX$ & $YXYXYXYxyxyxyxyX$\\%
    \bottomrule
  \end{tabular}
\end{table}

\begin{rem}
  Note that the number of reduced words of length at most $n$ in $\langle X,Y\rangle$ grows like $4\cdot3^{n-1}$. The word length of $W_{p/q}$ is $2q$. Hardy and Wright prove \autocite[Theorem 330 p.268]{hardywright}
  that the length of the Farey sequence of order $2q$, that is the sequence of all reduced fractions between $0$ and $1$ with denominator at most $2q$, is asymptotic to $\frac{3}{\pi^2}(2q)^2$.  Thus given a
  group $ \Gamma_z $ which is discrete but not free, \cref{fareythm2} is a powerful tool for finding relators of length $q$ or less.  We need only examine the approximately $q^2$ values $\Phi^{\infty,\infty}_{p/q}(z_0)$
  of the (roughly) $4\cdot3^{2q-1}$ possibilities for the relator.  Note that it is not claimed this is a complete presentation (and sometimes it is not) but once we know $ W_{p/q}$ is of finite order then
  this is often enough to identify the group as a Dehn surgery on a knot or link complement. This has proved a useful tool in the enumeration of arithmetic groups of $\PSL(2,{\mathbb C})$ generated by two
  elements. Conjecturally there are only finitely many such groups,  and it is known that there are only finitely many such groups of $\PSL(2,{\mathbb C}) $ generated by two parabolic or elliptic generators
  \autocite{maclachlan99}. For instance the groups generated by two parabolic elements are known, and there are only four of them \autocite{gehring98}. All of these groups are knot or link groups  and lie
  in the exterior of the  Riley slice.
\end{rem}

\subsection{Symmetries of the Farey words}

There are various symmetries visible in the Farey words; for instance, they are alternating products of $ X^{\pm 1} $ and $ Y^{\pm 1} $ which always end in $ X $ (this is
obvious from the definition). We study first some periodicity in $ p/q $.
\begin{lem}\label{lem:periodicity_symmetries}
  Let $ p/q \in \Q $ be non-negative and given in lowest terms.
  \begin{enumerate}
    \item If $ p = nq + p' $, then in case $ n $ is even $ W_{p/q} = W_{p'/q} $ and in case $ n $ is odd $ W_{p/q} $ is obtained from $ W_{p'/q} $ by replacing $ Y \leftrightarrow Y^{-1} $.
    \item $ W_{-p/q} $ is obtained from $ W_{p/q} $ by the following process: swap the sign of the exponent of every letter except for the $ q$th and $ 2q$th letters.
    \item If $ p = nq - p' $, then in case $ n $ is even $ W_{p/q} $ is obtained from $ W_{p'/q} $ by replacing $ X \leftrightarrow X^{-1} $ and $ Y \leftrightarrow Y^{-1} $ in every position
          except the $q$th and $ 2q$th position and in case $ n $ is odd $ W_{p/q} $ is obtained from $ W_{p'/q} $ by replacing $ X \leftrightarrow X^{-1} $ in every position except the $q$th and $2q$th position,
          as well as swapping $ Y \leftrightarrow Y^{-1} $ if it occurs in $q$th or the $2q$th position.
  \end{enumerate}
\end{lem}
\begin{proof}
  The first part is obtained using the fact that translating the intersection points of the cutting sequence vertically by even integers does not change the labels while translating
  by odd integers swaps the labels.

  To prove the second part, for all $ k $ such that $ kp/q $ is non-integral, we have $ \ceil -kp/q = -\floor kp/q = 1-\ceil (kp/q) $.
  In particular, the parity of the ceiling is swapped and so the sign of the exponent is swapped. For $ k $ such that $ p/q $ is integral (i.e. $ k $ is $ q $ or $ 2q $), due to our
  convention on the ceiling function, we have $ \ceil (-kp/q) = 1-kp/q $
  and $ \ceil (kp/q) = kp/q + 1 $, so the parities of $ \ceil (-kp/q) $ and $ \ceil (kp/q) $ are equal and the exponent of the corresponding letter is unchanged.

  The third part then follows by directly combining the first and second parts.
\end{proof}
By this lemma the words $ W_{p/q} $ for $ p/q \geq 0 $ are periodic in $ p/q $ with period $ 2 $.

\begin{lem}\label{lem:top_bottom_reflection}
  Let $ p/q \in (0,1) $ and let $ n \in \Z $. Then the word $ W_{n - p/q} $ is a cyclic permutation of $ W_{n+p/q} $ (if $ q $ is odd)
  or of $ W_{n+p/q}^{-1} $ (if $ q $ is even).
\end{lem}
The vertical bars in \cref{tab:words} show the necessary cyclic permutation: the words are rotated left until the vertical bar is at the front. The point of this
lemma is that $ W_{\pm p/q} $ represent the same free homotopy class of unoriented curves on $ S_{0,4} $.
\begin{proof}
  By part (1) of \cref{lem:periodicity_symmetries} it suffices to prove the result when $ n = 0 $.

  If $ q $ is odd, then let $ m $ be the unique even element of $ \{0,...,2q-1\} $ which solves the equation $ mp \equiv -1 \mod q $ (there are
  exactly two solutions to this equation, one even and one odd). Let $ J $ be the initial word of $ W_{p/q} $ of length $ m $. Then $ V = J^{-1} W_{p/q} J $
  is a cyclic permutation of $ W_{p/q} $, and since we know the parity of $ m $ we know it begins with some power of $ Y $. Note that $ mp/q = (lq - 1)/q = l-1/q $
  for some $ l \in \Z $; since $ m $ is even, $ l $ must be odd, and therefore the $ m$th letter of $ W_{p/q} $ is $ X $ and the $ (m+1)$th letter
  of $ W_{p/q} $ (the first letter of $ V $) is $ Y $. In general, comparing $ (m+k)p/q $ to $ kp/q $ gives us $ (m+k)p/q - kp/q = l - 1/q $; since $l$ is odd,
  increasing a height\footnote{We will always use the word `height' in the context of cutting sequences to mean the $ y$-ordinate at a particular integral $ x$-value.}
  by $ l $ always swaps the label $ X \leftarrow x $ or $ Y \leftarrow y $; subtracting $ 1/q $ does not change this unless the original height
  was $ kp/q $ for $ k \equiv 0 \mod q $. In other words, $ V $ is obtained   from $ W_{p/q} $ by swapping $ X \leftrightarrow x $ and $ Y \leftrightarrow y $ at
  every position except $ q $ and $ 2q $; by part (2) of \cref{lem:periodicity_symmetries} we have $ V = W_{-p/q} $.

  If $ q $ is even, then there are no even solutions to $ mp \equiv -1 \mod q $. In this case we let $ m = q-1 $. As above, let $ J $ be the $m$-initial
  segment of $ W_{p/q} $: we claim that $ J^{-1} W_{p/q} J = W_{-p/q}^{-1} $. By \cref{lem:periodicity_symmetries}, $ W_{-p/q}^{-1} $ is obtained from $ W_{p/q} $
  by swapping the exponents in the $ q$th and $ 2q$th positions and then reversing the word. We therefore want to show that the cutting sequence letter at $ (m + k)p/q $ is the
  same as that at $ (2q + 1 - k)p/q $ except when $ k-1 $ is a multiple of $ q $. Noting that everything is defined mod $ 2q $, we ask when $ (m+k)p/q $ and $ (1-k)p/q $ cut the
  same letter. Note, $ (m+k)p/q = (q-1+k) p/q = p + (k-1)p/q $.  By the same discussion as the proof of (2) of \cref{lem:periodicity_symmetries}, $ (k-1)p/q $ and $ (1-k)p/q $
  cut letters of opposite exponent iff $ (k-1)p/q $ is non-integral; shifting the height by the odd integer $ p $ always swaps the cutting sequence, hence the exponents swap
  twice giving no net change if $ k-1 $ is not a multiple of $ q $ and swap only once if $ k-1 $ is a multiple of $ q $.
\end{proof}

\begin{lem}\label{lem:wirtinger_symmetry}
  Let $ W_{p/q} $ be a Farey word; then the word consisting of the first $ 2q-1 $ letters of $ W_{p/q} $ is conjugate
  to $ X $ or $ Y $ according to whether the $ q$th letter of $ W_{p/q} $ is $ X^{\pm 1} $ or $ Y^{\pm 1} $ (i.e. according to
  whether $ q $ is even or odd respectively).
\end{lem}
\begin{proof}
  This identity comes from considering the rotational symmetry of the line of slope $ p/q $ about the point $ (q,p) $; it is clear from the symmetry of the picture
  that the first $ p-1 $ letters of $ W_{p/q} $ are obtained from the $ (p+1)$th to $ (2p-1)$th letters by reversing the order and swapping the case (imagine rotating the line
  by 180 degrees onto itself and observe the motion of the labelling).
\end{proof}
\begin{rem}
  An alternative interpretation of this, which originates with Riley \autocite{riley72}, is that one can write down a presentation for the $ q/p $ 2-bridge
  link group of the form $ \langle X, Y : W_{p/q} = 1 \rangle $
  by taking the 2-bridge diagram, letting $ X $ and $ Y $ represent the generators of the fundamental group corresponding to loops around the two bridges, and then writing down a conjugation relation
  of the form $ X = VXV^{-1} $ or $ Y = VXV^{-1} $ that comes from starting at $ X $, walking around the link building up conjugations as you walk under or over crossings, and stopping when you get either
  to the other bridge (in the case of a knot) or back to the bridge you started with (in the case of a two-component link); then $ W_{p/q} $ is a cyclic permutation of $ V XV^{-1} X^{-1} $ for a knot
  or $ V XV^{-1} Y^{-1} $ for a two-component link. This $ V $ is called the $ p/q $ \textbf{Riley word}.
\end{rem}

\subsection{Farey polynomials}

The \textbf{Farey polynomial of slope $ p/q $} is defined by $ \Phi_{p/q} \coloneq \tr W_{p/q} $; this is a polynomial in $ z $ of degree $ q $, with coefficients rational
functions of $ \alpha $ and $ \beta $. If we wish to emphasise the dependence on $ a $ and $ b $, we write $ \Phi^{a,b}_{p/q} $.

\begin{rem}\label{rem:riley}
  The Farey polynomials are not to be confused with the so-called \textbf{Riley polynomials} $ \Lambda_{p/q} $ defined by Riley \autocite[Proposition 1]{riley72} and studied e.g.
  by Chesebro \autocite{chesebro19}; there, $ \Lambda_{p/q} $ is the polynomial in $ z $ which is the top-left entry of the word consisting of the first $ q-1 $ letters
  of $ W_{p/q} $ (i.e. the conjugating word in \cref{lem:wirtinger_symmetry}). See also Remark~5.3.13 of \autocite{akiyoshi}.
\end{rem}

\begin{table}
  \centering
  \caption{Farey polynomials for small $ q $.\label{tab:fareys}}
  \begin{tabular}[h]{rl}
    \toprule
    $p/q$ & $\Phi^{a,b}_{p/q}(z)$\\\midrule
    $0/1$&$ \frac\alpha\beta + \frac\beta\alpha - z$\\
    $1/1$&$\alpha\beta + \frac{1}{\alpha\beta} +z$\\
    $1/2$&$2 + \left(\alpha\beta -\frac\alpha\beta - \frac\beta\alpha + \frac{1}{\alpha\beta}\right)z + z^2$\\
    $1/3$&$\begin{aligned}[t]\frac{1}{\alpha\beta} + \alpha\beta &+ \left( 3 -\frac{1}{\alpha^2} - \alpha^2 - \frac{1}{\beta^2} - \beta^2 + \frac{\alpha^2}{\beta^2} + \frac{\beta^2}{\alpha^2} \right)z\\&+ \left(\alpha\beta -2\frac\alpha\beta - 2\frac\beta\alpha + \frac{1}{\alpha\beta}\right)z^2 + z^3 \end{aligned}$\\
    $2/3$&$\begin{aligned}[t]\frac\alpha\beta + \frac\beta\alpha &+ \left(-3 + \alpha^2 + \frac{1}{\alpha^2}  - \frac{1}{\alpha^2 \beta^2} - \alpha^2 \beta^2 + \beta^2 + \frac{1}{\beta^2}\right)z\\&+ \left(-2 \alpha \beta -\frac{2}{\alpha\beta} + \frac\alpha\beta + \frac\beta\alpha\right)z^2 - z^3\end{aligned}$\\
    $1/4$&$\begin{aligned}[t]2&+\left(\frac{\alpha}{\beta^3} - \frac{\alpha^3}{\beta^3} +  \frac{2}{\alpha\beta} - 3\frac\alpha\beta + \frac{\alpha^3}{\beta} + \frac{\beta}{\alpha^3} - 3\frac\beta\alpha + 2 \alpha \beta - \frac{\beta^3}{\alpha^3} + \frac{\beta^3}{\alpha}\right)z\\&+ \left(6 - \frac{2}{\alpha^2} - 2 \alpha^2 - \frac{2}{\beta^2} + 3 \frac{\alpha^2}{\beta^2} - 2 \beta^2 + 3\frac{\beta^2}{\alpha^2}\right)z^2\\&+ \left(\frac{1}{\alpha \beta} - 3\frac\alpha\beta - 3\frac\beta\alpha + \alpha \beta\right)z^3 + z^4\end{aligned}$\\
    $3/4$&$\begin{aligned}[t]2&+\left(\frac{1}{\alpha^3 \beta^3} - \frac{1}{\alpha \beta^3} - \frac{1}{\alpha^3 \beta} + \frac{3}{\alpha \beta} - 2\frac{\alpha}{\beta} - 2\frac\beta\alpha +  3 \alpha \beta - \alpha^3 \beta - \alpha \beta^3 + \alpha^3 \beta^3\right)z\\&+\left(6 - \frac{2}{\alpha^2} - 2 \alpha^2 - \frac{2}{\beta^2} +     \frac{3}{\alpha^2 \beta^2} - 2 \beta^2 + 3 \alpha^2 \beta^2\right)z^2\\&+\left(\frac{3}{\alpha \beta} - \frac\alpha\beta - \frac\beta\alpha +     3 \alpha \beta\right)z^3 + z^4\end{aligned}$\\\bottomrule
  \end{tabular}
\end{table}

\begin{table}
  \centering
  \caption{Farey polynomials indexed by the Fibonacci fractions.\label{tab:fareysfib}}
  \begin{tabular}[h]{r>{\small}l}\toprule
    $\frac{\fib(q-1)}{\fib(q)}$ & $ \Phi^{\infty,\infty}_{\fib(q-1)/\fib(q)}(z) $ \\\midrule
    $0/1$ & $ 2-z $\\
    $1/1$ & $ 2+z $\\
    $1/2$ & $ 2+z^2 $\\
    $2/3$ & $ 2-z-2z^2-z^3$\\
    $3/5$ & $ 2+z+2z^2+3z^3+2z^4+z^5$\\
    $5/8$ & $ 2 + 4 z^4 + 8 z^5 + 8 z^6 + 4 z^7 + z^8 $\\
    $8/13$ & $ \begin{aligned}[t]2 &-z - 2 z^2 - 5 z^3 - 12 z^4 - 22 z^5 - 32 z^6 - 44 z^7 - 54 z^8\\&- 53 z^9 - 38 z^{10} - 19 z^{11} - 6 z^{12} - z^{13}\end{aligned}$\\
    $13/21$ & $ \begin{aligned}[t]2 &+ z + 2 z^2 + 7 z^3 + 14 z^4 + 31 z^5 + 64 z^6 + 124 z^7 + 214 z^8 \\
    &+339 z^9 + 498 z^{10} + 699 z^{11} + 936 z^{12} + 1148 z^{13} + 1216 z^{14}\\
    &+ 1064 z^{15} + 746 z^{16} + 409 z^{17} + 170 z^{18} + 51 z^{19} + 10 z^{20} + z^{21} \end{aligned}$\\
    $21/34$ & $ \begin{aligned}[t]2 &+ z^2 + 8 z^4 + 24 z^5 + 68 z^6 + 192 z^7 + 516 z^8 +1256 z^9 + 2834 z^{10}\\
    &+ 5912 z^{11} + 11460 z^{12} + 20816 z^{13} +  35598 z^{14} + 57248 z^{15} \\
    &+ 86446 z^{16} + 122560 z^{17} + 163199 z^{18} + 203952 z^{19} + 238564 z^{20}\\
    &+ 259704 z^{21} + 260686 z^{22} + 238320 z^{23} + 195694 z^{24} + 142328 z^{25}\\
    &+ 90451 z^{26} + 49552 z^{27} +  23058 z^{28} + 8952 z^{29} + 2831 z^{30} + 704 z^{31}\\
    &+ 130 z^{32} + 16 z^{33} + z^{34} \end{aligned}$\\
    $34/55$ & $ \begin{aligned}[t] 2 &-z - 4 z^2 - 10 z^3 - 34 z^4 - 103 z^5 - 286 z^6 - 791 z^7\\
    &- 2078 z^8 - 5221 z^9 - 12680 z^{10} - 29824 z^{11} -  67872 z^{12}\\
    &- 149896 z^{13} - 321800 z^{14} - 671896 z^{15} - 1364228 z^{16}\\
    &- 2692102 z^{17} - 5158232 z^{18} - 9587668 z^{19} - 17273444 z^{20}\\
    &- 30141702 z^{21} - 50903644 z^{22} - 83138942 z^{23} - 131230688 z^{24} \\
    &- 200056876 z^{25} - 294348624 z^{26} - 417663240 z^{27} -  571010576 z^{28} \\
    &- 751328456 z^{29} - 950188464 z^{30} - 1153232920 z^{31} - 1340813030 z^{32} \\
    &- 1490107333 z^{33} - 1578696308 z^{34} - 1589182962 z^{35} - 1513960786 z^{36} \\
    &- 1358696535 z^{37} - 1142850158 z^{38} - 896137319 z^{39} - 651440922 z^{40} \\
    &- 436582355 z^{41}- 268228504 z^{42} - 150207744 z^{43} - 76207672 z^{44} \\
    &- 34797892 z^{45} - 14193584 z^{46} - 5125756 z^{47} - 1621110 z^{48} \\
    &- 442809 z^{49} - 102556 z^{50} - 19630 z^{51} - 2990 z^{52}\\
    &- 341 z^{53} - 26 z^{54} - z^{55} \end{aligned} $\\\bottomrule
  \end{tabular}
\end{table}

\begin{ex}
  We list the Farey polynomials $\Phi^{a,b}_{p/q} $ with $ q \leq 4 $ in \cref{tab:fareys}. This illustrates some of the difficulty in studying these polynomials:
  they are clearly very symmetric, but quickly become too unwieldy to write explicitly and so actually guessing what the symmetries \emph{are} in general is hard.
  We also list the first few `Fibonacci' Farey polynomials $ \Phi^{\infty,\infty}_{\fib(q-1)/\fib(q)} $ (as usual, $ \fib(1) = 1 $, $ \fib(2) = 1 $,
  and $ \fib(n) \coloneq \fib(n-1)+\fib(n-2) $) in \cref{tab:fareysfib}.
\end{ex}

Once the structure of the Riley slice is known to be given by the Farey polynomials in the manner outlined in \cref{sec:context}, the four-fold symmetry of the Riley
slice can be deduced directly from the following symmetries of the Farey polynomials.

\begin{prp}
  Let $ p/q \in \Q $ and $ n \in \Z $. Let
  \begin{displaymath}
    c = \alpha\beta^{-1} + \beta\alpha^{-1} - \alpha\beta - \alpha^{-1}\beta^{-1} = 4\sin\frac{\pi}{a} \sin \frac{\pi}{b}.
  \end{displaymath}
  Then:
  \begin{enumerate}
    \item The Farey polynomials are symmetric around integer slopes: $ \Phi^{a,b}_{n-p/q}(z) = \Phi^{a,b}_{n+p/q}(z) $.
    \item The Farey polynomials are periodic of period $2$, and periodic of period $1$ up to a substitution: if $ n $ is odd then $ \Phi^{a,b}_{n+p/q}(z) = \Phi^{a,b}_{p/q}(c-z) $,
          and if $ n $ is even then $ \Phi^{a,b}_{n+p/q}(z) = \Phi^{a,b}_{p/q}(z) $.
  \end{enumerate}
\end{prp}
\begin{proof}
  Part (1) follows directly from  \cref{lem:top_bottom_reflection}.
  Part (2) follows directly from (1) of \cref{lem:periodicity_symmetries} when $ n $ is even. When $ n $ is odd,
  by the same lemma we see that $\Phi^{a,b}_{n+p/q}(z)$ is obtained from $ \Phi^{a,b}_{p/q}(z)$ by swapping $ Y $ and $ Y^{-1} $, and this is
  equivalent to swapping $ z \leftrightarrow -z $ and $ \beta \leftrightarrow \beta^{-1} $. By Lemma~3.5.1 of \autocite{maclauchlanreid}, $ \tr W_{p/q} $ is
  an integer polynomial in $ \tr X = \alpha + \alpha^{-1} $, $ \tr Y = \beta + \beta^{-1} $, and $ \tr XY = \alpha\beta + \alpha^{-1}\beta^{-1} + z $. Write
  $ P $ for this polynomial, so
  \begin{displaymath}
    \Phi^{a,b}_{p/q}(z) = P(\tr X, \tr Y, \tr XY) \text{ and } \Phi^{a,b}_{n+p/q}(z) = P(\tr X, \tr Y^{-1}, \tr XY^{-1}).
  \end{displaymath}
  Observe that $ \tr XY^{-1} = \alpha \beta^{-1} + \alpha^{-1}\beta - z $. Since $ \tr Y = \tr Y^{-1} $, we are simply comparing
  \begin{displaymath}
    P(\alpha+\alpha^{-1}, \beta+\beta^{-1}, \alpha\beta + \alpha^{-1}\beta^{-1} + z) \text{ and } P(\alpha+\alpha^{-1}, \beta+\beta^{-1}, \alpha \beta^{-1} + \alpha^{-1}\beta - z)
  \end{displaymath}
  and the two expressions are equal under the (involutive) change of variables
  \begin{displaymath}
    z \mapsto \left(\alpha\beta^{-1} + \beta\alpha^{-1} - \alpha\beta - \alpha^{-1}\beta^{-1} - z\right),
  \end{displaymath}
  i.e. $ z \mapsto c-z $.
\end{proof}

\section{Farey theory}\label{sec:farey}
Our recursion formula, like that for the Maskit slice in \autocite[283--285]{indras_pearls}, is a recursion down the Farey diagram. We recall some of the notation and ideas, which can be found for instance in  \autocite[\S 4.5]{knuthCM} or \autocite[Chapter III]{hardywright}.

The subgroup $ \Gamma_1 $ of $ \PSL(2,\Z) $ generated by the two matrices
\begin{displaymath}
  L = \begin{bmatrix} 1 & 0 \\ 1 & 1 \end{bmatrix} \quad\text{and}\quad R = \begin{bmatrix} 1 & 1 \\ 0 & 1 \end{bmatrix}.
\end{displaymath}
is the orientation-preserving half of the mapping class group of the four-times punctured sphere and hence are the natural group which
acts on the complex of curves on that surface (for more information on this perspective see the discussion in \autocite{akiyoshi}
and \autocite[Chapter 12]{burde}). The action of this group by matrix multiplication on vectors with coprime integer coordinates (i.e. elements of $ \Z^2 $
visible from the origin) is equivalent to the action of this group on $ \hat{\Q} $ via fractional linear transformations, and the system of coding
curves which we described in the previous section is exactly the method of assigning a bijection between curves and rational numbers
such that the two actions ``$\PSL(2,\Z) $ on $ \hat{\Q} $'' and ``$ \Mod(S_{0,4}) $ on curves'' are isomorphic.

We have more structure here. The group $ \SL(2,\Z) $ acts on itself by left multiplication. This action has an interesting preserved
set: if the columns of $ B \in \SL(2,\Z) $ are both visible from the origin of $ \Z^2 $, then its image $ AB $ under multiplication by
any matrix $ A \in \SL(2,\Z) $ has the same property. This preservation property descends to $ \PSL(2,\Z) $, so the group $ \Gamma_1 $ acts
on ordered pairs $ p/q, r/s \in \hat{\Q} $ such that $ \abs{ps-rq} = 1 $. Such a pair is called a pair of \textbf{Farey neighbours}. The
graph with vertices $ \hat{\Q} $ and edges joining Farey neighbours is called the \textbf{Farey diagram} and it induces a triangulation of $\H^2$
shown in \cref{fig:farey_tess}. We denote this complex by $ \mathcal{D} $.

\begin{figure}
  \centering
  \includegraphics[width=0.8\textwidth]{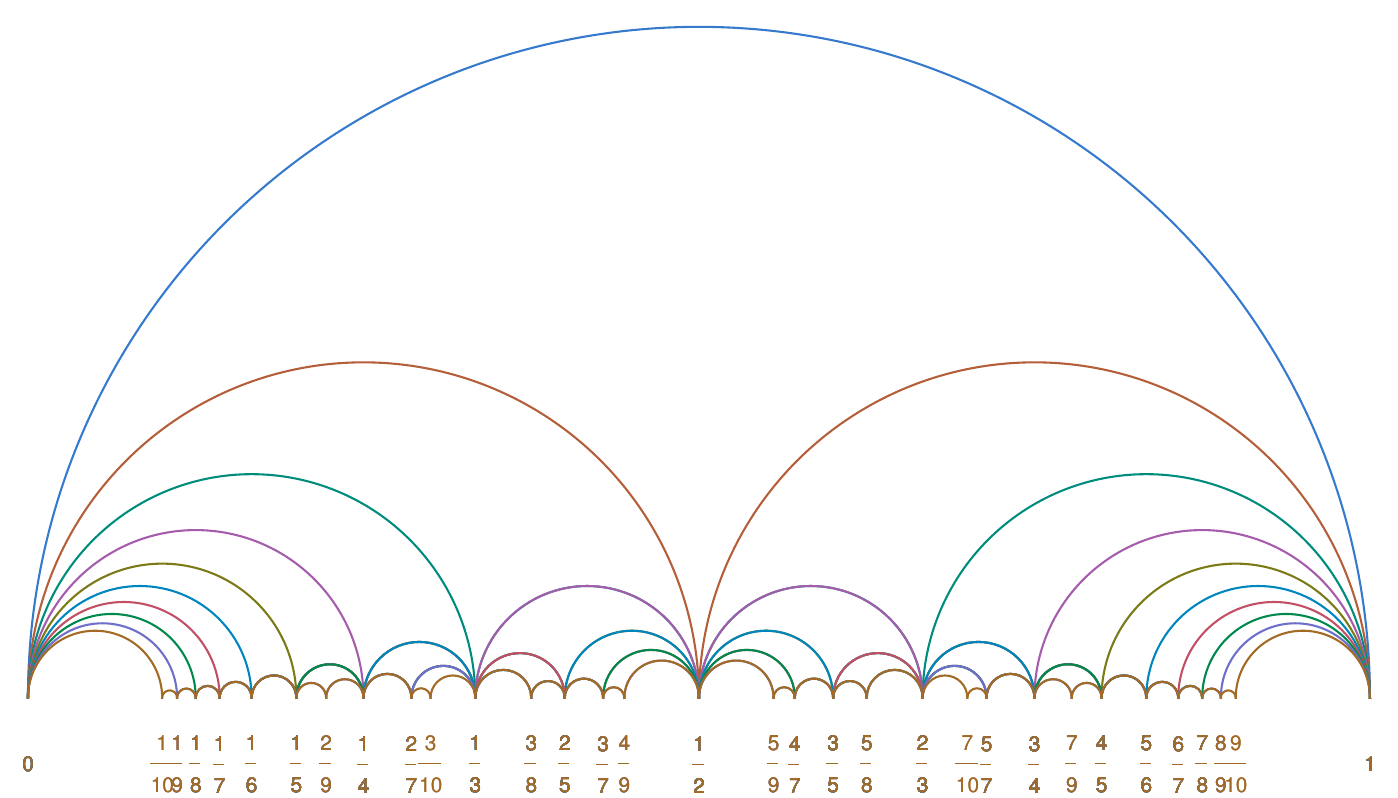}
  \caption{The subset of the Farey triangulation of $\H^2$ with vertices $ p/q \in [0,1]\inter\Q $ such that $ q \leq 10 $.\label{fig:farey_tess}}
\end{figure}

We now define a useful binary operation---this can be viewed either as a partial operation on $ \hat{\Q} $, or as a function from edges of the Farey diagram to $ \hat{\Q} $.
If $ p/q $ and $ r/s $ are Farey neighbours written in least terms, then we define
\begin{displaymath}
  \frac{p}{q} \oplus \frac{r}{s} \coloneq \frac{p+r}{q+s}.
\end{displaymath}
It is called the \textbf{mediant} or \textbf{Farey addition} operation. We have already mentioned that the Farey diagram induces a triangulation of $ \H^2 $, and in fact
for every edge joining two Farey neighbours we get a triangle consisting of those Farey neighbours together with their Farey sum; the triangles correspond to homology bases
for $ S_{0,4} $. The \textbf{Farey graph} is the digraph with vertices $ \hat{\Q} $ and directed edges from $ p/q $ and $ r/s $
to $ p/q \oplus r/s $ (\cref{fig:farey}). This is just the 1-skeleton of $ \mc{D} $ with an added choice of orientation for each edge.

\begin{figure}
  \centering
  \includegraphics[width=\textwidth]{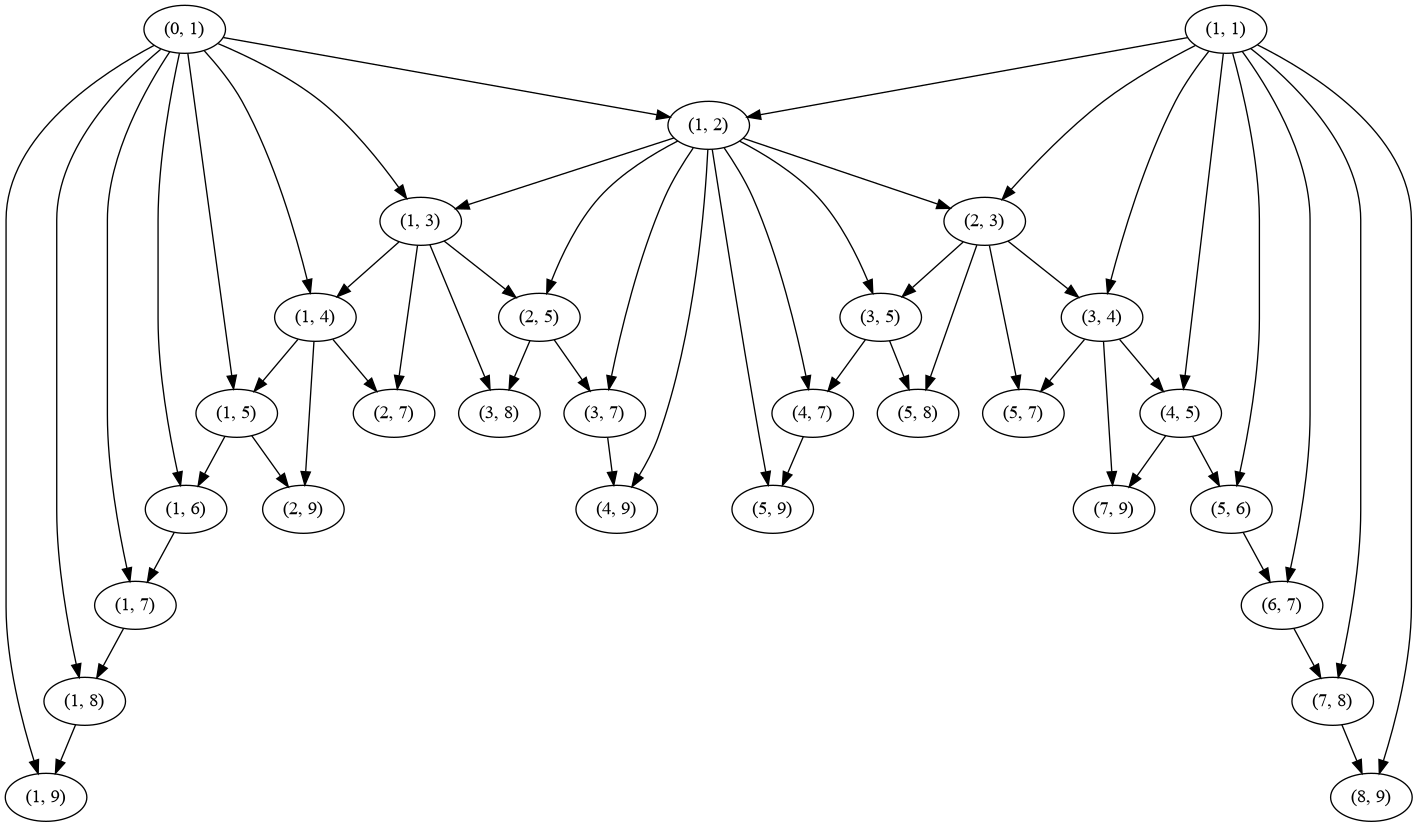}
  \caption{The Farey addition graph.\label{fig:farey}}
\end{figure}

Later on, we will need the following result which is a generalisation of the observation `the Farey sum of two rational numbers lies between them'.
\begin{lem}[{\autocite[\S 3.3]{hardywright}}]\label{lem:farey_closeness}
  If $ p/q $ and $ r/s $ are Farey neighbours with $ p/q < r/s $, then $ p/q < (p+r)/(q+s) < r/s $, and $ (p+r)/(q+s) $ is the unique fraction of minimal
  denominator between $ p/q $ and $ r/s $. More precisely, let $ u/v $ be any fraction in $ (p/q,r/s) $; then there exist two positive integers $ \lambda, \mu $
  such that
  \begin{displaymath}
    u = \lambda p + \mu r \text{ and } v = \lambda r + \mu s
  \end{displaymath}
  (so the minimal denominator is obtained when $ \lambda = \mu = 1 $). \qed
\end{lem}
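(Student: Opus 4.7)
The plan is to prove the strengthened ``more precisely'' statement first, from which the first sentence follows as a special case. Throughout I assume, without loss of generality, that all denominators are positive; the hypothesis $p/q < r/s$ combined with $ps-qr = \pm 1$ then forces $qr-ps = 1$ (since $ps < qr$).

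First, I would set up the linear system
\begin{displaymath}
  \begin{pmatrix} p & r \\ q & s \end{pmatrix} \begin{pmatrix} \lambda \\ \mu \end{pmatrix} = \begin{pmatrix} u \\ v \end{pmatrix}
\end{displaymath}
for an arbitrary $u/v \in (p/q, r/s)$. Since the coefficient matrix has determinant $ps - qr = -1$, Cramer's rule gives
\begin{displaymath}
  \lambda = vr - us, \qquad \mu = uq - vp,
\end{displaymath}
which are automatically integers.

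The key step is to check that $\lambda$ and $\mu$ are \emph{positive}. This is where the bracketing hypothesis enters: the inequality $u/v > p/q$ is, after clearing denominators, exactly $uq - vp > 0$, i.e.\ $\mu > 0$; similarly, $u/v < r/s$ translates to $vr - us > 0$, i.e.\ $\lambda > 0$. Being positive integers, both are at least $1$.

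From $v = \lambda q + \mu s \geq q + s$, with equality iff $\lambda = \mu = 1$, I read off that the minimal denominator is achieved exactly by $u/v = (p+r)/(q+s)$, and moreover this fraction is already in lowest terms (the determinant condition guarantees any common factor of $p+r$ and $q+s$ must divide $\pm 1$). Setting $(u,v) = (p+r, q+s)$ one sees it lies strictly between $p/q$ and $r/s$, which gives the first sentence of the lemma. The whole argument is essentially bookkeeping with the determinant identity, so the only genuine ``obstacle'' is being careful about signs at the outset when fixing the orientation of the determinant from $p/q < r/s$; once that is pinned down, the rest is automatic.
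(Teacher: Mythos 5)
Your proof is correct. The paper in fact gives no proof of this lemma at all --- it is stated with a tombstone and attributed to Hardy and Wright, \S 3.3 --- and your argument (invert the matrix $\begin{psmallmatrix} p & r \\ q & s\end{psmallmatrix}$, note $\lambda = vr-us$ and $\mu = uq-vp$ are integers because the determinant is $\pm1$, get positivity from the two bracketing inequalities, and conclude $v = \lambda q + \mu s \geq q+s$ with equality exactly at the mediant) is precisely the standard argument from that source, so there is nothing to object to. One incidental point worth recording: your write-up silently corrects a typo in the statement as printed, which says $v = \lambda r + \mu s$ where it should say $v = \lambda q + \mu s$; your version is the correct one, and it is the one actually used later in the proof of \cref{lem:product_word}.
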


It will be convenient, finally, to have the notation $ p/q \ominus r/s $ for the fraction $ (p-r)/(q-s) $; we will only use this when it is known that $ (p-r)/(q-s) $
and $ r/s $ are Farey neighbours (which is implied by neighbourliness of $ p/q $ and $ r/s $). If $ (p/q, r/s) $ is an edge in the Farey triangulation, then it forms
the boundary between the two triangles $ (p/q,p/q\oplus r/s,r/s) $ and $ (p/q,p/q\ominus r/s,r/s) $. Note that $ \ominus $ is commutative.

\section{A recursion formula to generate Farey polynomials}\label{sec:recursion}
In this section, we will give a recursion formula for the Farey polynomials. This recursion will be a recursion `down the Farey graph', in the sense that
its input will be the Farey polynomials at the vertices of a triangle $ (p/q,p/q \ominus r/s,r/s) $ and its output will be the Farey polynomial
at $ p/q \oplus r/s $:
\begin{displaymath}
  \vcenter{\hbox{\begin{tikzpicture}
    \node[shape=circle, fill=black, minimum size=5pt, inner sep=0pt, outer sep=0pt, label={above:$p/q \ominus r/s$}] (A) at (0,1/2) {};
    \node[shape=circle, fill=black, minimum size=5pt, inner sep=0pt, outer sep=0pt, label={below:$p/q \oplus r/s$}] (D) at (0,-1/2) {};
    \node[shape=circle, fill=black, minimum size=5pt, inner sep=0pt, outer sep=0pt, label={left:$p/q$}] (B) at (-1/2,0) {};
    \node[shape=circle, fill=black, minimum size=5pt, inner sep=0pt, outer sep=0pt, label={right:$r/s$}] (C) at (1/2,0) {};

    \draw[-stealth'] (A)--(B);
    \draw[-stealth'] (A)--(C);
    \draw[-stealth'] (B)--(C);
    \draw[-stealth',dotted] (B)--(D);
    \draw[-stealth',dotted] (C)--(D);
  \end{tikzpicture}}}
\end{displaymath}

We begin by finding a similar recurrence for the Farey words; we will then produce a recurrence for the polynomials using standard trace identities and elbow grease.
One might guess, for instance by analogy with the Maskit slice \autocite[277]{indras_pearls}, that $ W_{p/q} W_{r/s} = W_{p/q \oplus r/s} $ when $ p/q < r/s $ are Farey neighbours. It is easy to check whether
or not this is true:
\begin{gather*}
  W_{1/2} = yxYX, \qquad W_{1/1} = YX,\\
  W_{1/2} W_{1/1}  = yx{\color{red} Y}XYX, \text{ and}\\
  W_{1/2 \oplus 1/1} = W_{2/3} = yx{\color{red} y}XYX.
\end{gather*}
\begin{gather*}
  W_{1/3} = yXYxYX, \qquad W_{2/5} = yXYxyXyxYX,\\
  W_{1/3} W_{2/5} = yXYxYXy{\color{red} X}YxyXyxYX, \text{ and}\\
  W_{1/3 \oplus 2/5} = W_{3/8} = yXYxYXy{\color{red} x}YxyXyxYX.
\end{gather*}

These two examples suggest that our guess is almost correct, and that we need to make a correction in only one position.
\begin{lem}\label{lem:product_word}
  Let $ p/q $ and $ r/s $ be Farey neighbours with $ p/q < r/s $. Then $ W_{p/q \oplus r/s} $ is equal to:
  \begin{enumerate}
    \item $ W_{p/q} W_{r/s} $ with the sign of the $ (q+s)$th exponent swapped;
    \item $ W_{r/s} W_{p/q} $ with the sign of the $ (2s)$th exponent swapped.
  \end{enumerate}
\end{lem}
\begin{proof}
  We prove (1); the proof of (2) is done in the same way, and we briefly indicate the difference at the end.
  The situation is diagrammed in \cref{fig:concatenation} for convenience. To simplify notation, in this proof we write $ h(i) $ for
  the height $ (\frac{p}{q}\oplus \frac{r}{s}) i $. Observe that $ h(i) $ is integral only at $ i = 0 $ and $ i = 2q+2s $ (at both positions trivially the letters
  in $ W_{(p/q)\oplus(r/s)} $ and $ W_{p/q}W_{r/s} $ are identical) and at $ i = q+s $. The lemma will follow once we check that that at the positions $ i \not\in \{0,2q+2s\} $,
  \begin{displaymath}
    \text{if } 0 < i \leq 2q  \text{ then } (p/q) i < h(i) < \ceil (p/q) i
  \end{displaymath}
  and
  \begin{displaymath}
    \text{if } 0 < i < 2s \text{ then } (r/s)i + 2q < h(i + 2p) < \ceil [(r/s)i + 2p]:
  \end{displaymath}
  indeed, these inequalities show that at every integral horizontal distance the height of the line corresponding to $ W_{p/q \oplus r/s} $ is meeting the same vertical line segment
  as the line corresponding to $ W_{p/q} $ or $ W_{r/s} $, and so the letter chosen is the same except at $ i = q+s $ since at this position the height of the line of
  slope $ (p/q)\oplus(r/s) $, being integral, is rounded up to $ h(i)+1 $ while the height of the line of slope $ r/s $ is non-integral so is rounded up to the integer $ h(i) $.

  Observe now that the two displayed inequalities are equivalent to the following: there is no integer between $ (p/q)i $ and $ (p/q \oplus r/s)i $ (exclusive)
  if $ 0 < i \leq 2q $, and there is no integer between $ (r/s)i + 2q $ and $ h(i + 2p) $ if $ 0 < i < 2s $. But these follow from \cref{lem:farey_closeness}. Indeed,
  the lemma shows that no integer lies between $ p/q $ and $ (p/q) \oplus (r/s) $; suppose $ a/b $ is a rational between $ (p/q)i $ and $ h(i) $,
  then $ a = i(\lambda p + \mu (p+r)) $ and $ b = (\lambda q + \mu (q+s)) $ for some positive $ \lambda, \mu $; suppose $ a/b \in \Z $, so $ \lambda q + \mu (q+s) $ divides
  $ i(\lambda p + \mu (p+r)) $. By the case $ i = 1 $, $ \lambda p + \mu (p+r) $ and $ \lambda q + \mu (q+s) $ are coprime, so $ \lambda q + \mu (q+s) $ divides
  $ i $; but $ \lambda q + \mu (q+s) \geq 2q $. The case of $ (r/s)i + 2q $ and $ h(i + 2p) $ is proved in a similar way.

  The proof of part (2) differs only in the relative position of the `kink' between the concatenated $ p/q $ and $ r/s $ segments and the $ (p+r)/(q+s)$ segment: the
  kink is at position $ 2s $ horizontally and lies above the $ (p+r)/(q+s) $ line rather than below, so it is at the $(2s)$th position that the `modified ceiling'
  makes a nontrivial appearance.
\end{proof}

\begin{figure}
  \centering
  \includegraphics[width=\textwidth]{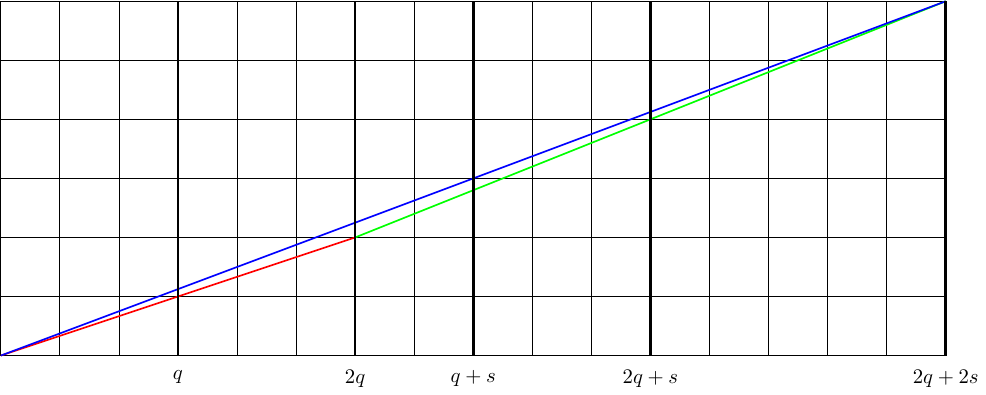}
  \caption{Farey addition versus word multiplication for $ W_{p/q} $ (red) and $ W_{r/s} $ (green).\label{fig:concatenation}}
\end{figure}

\Cref{lem:product_word} is not new. Part (1) appears as Propostion~4.2.4 of \autocite{zhang10}, and in the remark at the end of \S 4 of \autocite{keen94} Keen and Series indicate that they had
found an identity like part (2). However, the following consequences for the Farey polynomials are require different techniques to prove and have, to the best of our knowledge, not appeared before in the literature.

\begin{lem}[Product lemma]\label{lem:traceid}
  Let $ p/q $ and $ r/s $ be Farey neighbours with $ p/q < r/s $. Then the following trace identity holds:
  \begin{displaymath}
    \tr W_{p/q} W_{r/s} + \tr W_{p/q \oplus r/s} =
    \begin{cases}
      2 + \alpha^2 + \frac{1}{\alpha^2} & \text{if $ q+s $ is even,} \\
      \alpha\beta + \frac{\alpha}{\beta} + \frac{\beta}{\alpha} + \frac{1}{\alpha\beta} & \text{if $ q+s $ is odd.}
    \end{cases}
  \end{displaymath}
\end{lem}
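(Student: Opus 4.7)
The plan is to derive the identity by applying the standard $\SL(2,\mathbb{C})$ trace relation
\[
\tr(AB) + \tr(AB^{-1}) = \tr(A)\tr(B)
\]
to the factorisation supplied by \cref{lem:product_word}. I will write $W_{p/q}W_{r/s} = U\cdot L\cdot V$, where $L$ is the $(q+s)$-th letter and $U$, $V$ are the surrounding prefix and suffix; by \cref{lem:product_word}, $W_{p/q\oplus r/s} = U\cdot L^{-1}\cdot V$. Cyclicity of trace combined with the identity above (applied with $A = VU$ and $B = L$) then yields
\[
\tr W_{p/q}W_{r/s} + \tr W_{p/q\oplus r/s} = \tr(L)\,\tr(VU),
\]
so it remains only to evaluate each factor.

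The factor $\tr(L)$ can be read off directly from the cutting-sequence construction: the $(q+s)$-th letter of $W_{p/q\oplus r/s}$ sits at the lattice point $(q+s, p+r)$, and the parity argument behind \cref{lem:obvious_symmetry} shows that this letter is of type $X^{\pm 1}$ when $q+s$ is even and of type $Y^{\pm 1}$ when $q+s$ is odd. Hence $\tr(L) = \alpha+\alpha^{-1}$ in the even case and $\tr(L) = \beta+\beta^{-1}$ in the odd case.

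The core of the argument is showing that $\tr(VU) = \alpha+\alpha^{-1}$ unconditionally. The $180$-degree rotation about $(q+s,\, p+r)$ that underlies \cref{lem:obvious_symmetry} pairs the letter at position $q+s-k$ with the letter at position $q+s+k$ as mutual inverses, producing a symmetric factorisation
\[
W_{p/q\oplus r/s} \;=\; a_1\cdots a_{q+s-1}\; L^{-1}\; a_{q+s-1}^{-1}\cdots a_1^{-1}\; X,
\]
in which the trailing $X$ at position $2(q+s)$ is the sole letter left unpaired by the rotation. Consequently $UV$, the word obtained by deleting the middle letter, is $a_1\cdots a_{q+s-1}\,a_{q+s-1}^{-1}\cdots a_1^{-1}\,X$, which freely reduces to $X$; and since $VU$ is conjugate to $UV$, we get $\tr(VU)=\tr(UV)=\tr X=\alpha+\alpha^{-1}$.

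Multiplying the two factors then yields exactly the two cases of the statement: $(\alpha+\alpha^{-1})^2 = 2+\alpha^2+\alpha^{-2}$ when $q+s$ is even, and $(\alpha+\alpha^{-1})(\beta+\beta^{-1}) = \alpha\beta + \frac{\alpha}{\beta} + \frac{\beta}{\alpha} + \frac{1}{\alpha\beta}$ when $q+s$ is odd. I expect the principal bookkeeping effort to lie in justifying the symmetric factorisation cleanly---that is, in verifying that the rotation argument sketched in the proof of \cref{lem:obvious_symmetry} really does provide the pairing $b_k = a_{q+s-k}^{-1}$ for all $k$, with due care for the trailing $X$ that sits outside the symmetry. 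Everything else is a short calculation.
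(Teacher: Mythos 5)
Your proposal is correct and takes essentially the same route as the paper's proof: cycle the word so the flipped $(q+s)$-th letter $L$ is isolated, apply $\tr(AB)+\tr(AB^{-1})=\tr(A)\tr(B)$ with $A=VU$, $B=L$, and use the rotational pairing underlying \cref{lem:obvious_symmetry} (together with \cref{lem:product_word}) to see that the complementary word is conjugate to $X$, so the sum equals $\tr(X)\tr(L)$. Your direct expansion of $(\alpha+\alpha^{-1})^2$ and $(\alpha+\alpha^{-1})(\beta+\beta^{-1})$ is a marginally cleaner finish than the paper's trigonometric computation with $2\Re\alpha$, but the substance is identical.
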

\begin{proof}
  Trace is invariant under cyclic permutations, thus (applying \cref{lem:product_word}) we can write
  \begin{displaymath}
    \tr W_{p/q} W_{r/s} = \tr AB \text{ and } \tr W_{p/q \oplus r/s} = \tr AB^{-1},
  \end{displaymath}
  where $ B $ is the $ (q+s)$th letter of $ W_{p/q} W_{r/s} $ and $ A $ is the remainder of the word but with the final letters cycled to the front. Now
  we know that $ \tr AB = \tr A \tr B - \tr AB^{-1} $ (see the useful list of trace identities found in Section~3.4 of \autocite{maclauchlanreid}), so it
  suffices to check that $ \tr A \tr B = 2 + \alpha^2 + \frac{1}{\alpha^2} $ if $ q+s $ is even
  and $ \alpha\beta + \frac{\alpha}{\beta} + \frac{\beta}{\alpha} + \frac{1}{\alpha\beta} $ otherwise.

  \paragraph{Case I. $q+s$ is even.}
  Observe now that $ B $ is $ X^{\pm 1} $ if $ q+s $ is even; then $ \tr B = 2\Re \alpha $. The identity to show is therefore $ \tr A = ( 2 + \alpha^2 + \frac{1}{\alpha^2})/(2\Re\alpha) $;
  recalling that $ \abs{\alpha} = 1 $ and using the double angle formulae we have
  \begin{displaymath}
    \frac{2}{2\Re \alpha} + \frac{\alpha^2}{2\Re\alpha} + \frac{\alpha^{-2}}{2\Re\alpha} = \frac{1}{\cos\theta} + \left( \alpha - \frac{1}{2\cos\theta} \right) + \left( \overline\alpha - \frac{1}{2\cos\theta} \right) = 2\cos\theta
  \end{displaymath}
  where $ \theta = \arg \alpha $. Thus we actually just need to show $ \tr A = 2\cos\theta $.

  \paragraph{Case II. $q+s$ is odd.}
  In this case, $ B $ is $ Y^{\pm 1} $ and so $ \tr B = 2\Re\beta $; we therefore wish to show that $ \tr A = (\alpha\beta + \frac{\alpha}{\beta} + \frac{\beta}{\alpha} + \frac{1}{\alpha\beta})/(2\Re\beta) $;
  again using trigonometry we may simplify the right side,
  \begin{displaymath}
    \frac{\alpha\beta}{2\Re\beta} + \frac{\alpha/\beta}{2\Re\beta} + \frac{\beta/\alpha}{2\Re\beta} + \frac{1/(\alpha\beta)}{2\Re\beta}
      = 2\cos\theta
  \end{displaymath}
  and so again we need only show that $ \tr A = 2\cos\theta $ where $ \theta = \arg \alpha $.

  Both cases then reduce to the identity $ \tr A = \tr X $. It will be enough to show that $ A $ is conjugate to $ X $; by construction of $ A $, this is equivalent
  to showing that in $ W_{p/q \oplus r/s} $ the $ (q+s+1)$th to $ (2q+2s-1)$th letters are obtained from the first $ q+s-1 $ letters by reversing the order and swapping the case.
  But this is just \cref{lem:wirtinger_symmetry}.
\end{proof}

In the case that $ X $ and $ Y $ are parabolics and $ \alpha = \beta = 1 $, the two formulae unify to become:
\begin{displaymath}
  \tr W_{p/q} W_{r/s} = 4 - \tr W_{p/q \oplus r/s}.
\end{displaymath}

We may similarly prove the following `quotient lemma':
\begin{lem}[Quotient lemma]\label{lem:traceid2}
  Let $ p/q $ and $ r/s $ be Farey neighbours with $ p/q < r/s $. Then the following trace identity holds:
  \begin{displaymath}
    \tr W_{p/q} W_{r/s}^{-1} + \tr W_{p/q \ominus r/s} =
    \begin{cases}
      2 + \beta^2 + \frac{1}{\beta^2} & \text{if $ q-s $ is even,} \\
      \alpha\beta + \frac{\alpha}{\beta} + \frac{\beta}{\alpha} + \frac{1}{\alpha\beta} & \text{if $ q-s $ is odd.}
    \end{cases}
  \end{displaymath}
\end{lem}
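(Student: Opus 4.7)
The plan is to mimic the reduction in the proof of \cref{lem:traceid} but for the quotient $W_{p/q}W_{r/s}^{-1}$, organising the algebra so that applying \cref{lem:obvious_symmetry} to both $W_{p/q}$ and $W_{r/s}$ makes the bulky conjugating factors cancel inside the trace. Assume first that $q > s$ (the case $q < s$ is handled symmetrically below). By \cref{lem:product_word} applied to the Farey neighbours $p/q \ominus r/s = (p-r)/(q-s)$ and $r/s$, whose mediant is $p/q$, the words $W_{p/q}$ and $W_{p/q \ominus r/s}W_{r/s}$ agree everywhere except at position $q$, where the letters are inverses of one another. Writing $W_{p/q} = \tilde X L_q \tilde Y$ with $|\tilde X| = q-1$, $L_q$ the $q$-th letter and $|\tilde Y| = q$, we get $W_{p/q \ominus r/s}W_{r/s} = \tilde X L_q^{-1} \tilde Y$, so that
\begin{displaymath}
  W_{p/q}W_{r/s}^{-1} = \tilde X L_q \tilde Y W_{r/s}^{-1}, \qquad W_{p/q \ominus r/s} = \tilde X L_q^{-1} \tilde Y W_{r/s}^{-1}.
\end{displaymath}

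Set $D = \tilde Y W_{r/s}^{-1} \tilde X$; cyclic invariance of the trace together with the standard $\SL(2,\C)$ identity $\tr(MN) + \tr(MN^{-1}) = \tr M \cdot \tr N$ collapse the left-hand side of the lemma to $\tr L_q \cdot \tr D$. By Farey-word alternation, $\tr L_q$ is $\tr X$ or $\tr Y$ according to the parity of $q$. To compute $\tr D$, apply \cref{lem:obvious_symmetry} to $W_{p/q}$ to obtain the factorisation $W_{p/q} = H^{-1} L_q H X$ with $H = L_{q+1} \cdots L_{2q-1}$; this identifies $\tilde X = H^{-1}$ and $\tilde Y = HX$, so that $D = HXW_{r/s}^{-1}H^{-1}$ is conjugate to $XW_{r/s}^{-1}$. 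Applying the same lemma to $W_{r/s} = K^{-1} N_s K X$ gives $W_{r/s}^{-1} = X^{-1}K^{-1}N_s^{-1}K$; the leading $X$ then cancels, $XW_{r/s}^{-1} = K^{-1}N_s^{-1}K$ is conjugate to $N_s^{-1}$, and hence $\tr D = \tr N_s$, which is $\tr X$ or $\tr Y$ according to the parity of $s$.

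Assembling, $\tr(W_{p/q}W_{r/s}^{-1}) + \tr(W_{p/q \ominus r/s}) = \tr L_q \cdot \tr N_s$. The Farey neighbour condition $ps - qr = \pm 1$ precludes $q$ and $s$ from both being even, so $q-s$ is even precisely when $q, s$ are both odd — yielding $(\tr Y)^2 = 2 + \beta^2 + \beta^{-2}$ — and $q-s$ is odd precisely when exactly one of them is even — yielding $\tr X \cdot \tr Y = \alpha\beta + \alpha/\beta + \beta/\alpha + 1/(\alpha\beta)$, matching the two cases of the statement. The symmetric case $q < s$ is handled by applying \cref{lem:product_word} instead to $p/q$ and $(r-p)/(s-q) = p/q \ominus r/s$, whose mediant is $r/s$: then $W_{r/s}$ and $W_{p/q}W_{p/q \ominus r/s}$ differ at position $s$, and the analogous algebra (now using \cref{lem:obvious_symmetry} on $W_{r/s}$ first) produces the same identity $\tr L_q \cdot \tr N_s$. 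The main conceptual obstacle is spotting the key cancellation that makes all this work: the terminal $X$ appearing in the \cref{lem:obvious_symmetry} factorisation of $W_{r/s}$ annihilates the leading $X^{-1}$ of $W_{r/s}^{-1}$, so what survives inside the trace is only a conjugate of the single middle letter $N_s$.
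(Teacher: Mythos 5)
Your proposal is correct and follows essentially the same route as the paper's proof: use \cref{lem:product_word} (applied to $p/q\ominus r/s$ and $r/s$, whose mediant is $p/q$) to realise the two words as differing only in the inverted middle letter, cyclically permute and apply $\tr(MN)+\tr(MN^{-1})=\tr M\,\tr N$, identify the remaining factor as a conjugate of the $s$th letter of $W_{r/s}$ via \cref{lem:obvious_symmetry}, and finish with the same parity analysis. The only difference is cosmetic notation, plus your explicit treatment of the $q<s$ case, which the paper's write-up silently suppresses.
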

\begin{proof}
  We begin by setting up notation. By \cref{lem:wirtinger_symmetry} we may write $ W_{p/q} = UAuX $ for some word $ U $ (recall our convention that, when
  writing words, $ u \coloneq U^{-1} $) with $ A = X^{\pm 1} $ if $ q $ is even and $ A = Y^{\pm 1} $ if $ q $ is odd; similarly, write $ W_{r/s} = VBvX $ for some word $ V $ and with $ B $
  equal to one of $ X^{\pm 1} $ or $ Y^{\pm 1} $. Then
  \begin{displaymath}
    W_{p/q} W_{r/s}^{-1} = UAuX xVbv = UAuVbv;
  \end{displaymath}
  by \cref{lem:product_word}, we have also that $ W_{r/s} W_{p/q \ominus r/s} $ is $ W_{p/q} $ with the sign of the exponent of the $q$th letter swapped; explicitly,
  \begin{displaymath}
    W_{p/q \ominus r/s} VBvX = UauX \implies W_{p/q \ominus r/s} = UauX xVbv = UauVbv.
  \end{displaymath}

  Our goal is therefore to compute $ \tr UAuVbv + \tr UauVbv $; performing a cyclic permutation again, this is equivalent to $ \tr A(uVbvU) + \tr a(uVbvU) $. In this form, this
  becomes
  \begin{displaymath}
     \tr A(uVbvU) + \tr a(uVbvU) = \tr A \tr uVbvU = \tr A \tr b.
  \end{displaymath}

  Consider now the cases for the product $ \tr A \tr b $:

  \begin{center}
    \begin{tabular}{ccc}
      \toprule
        & $ q $ odd & $ q $ even\\\midrule
      $s$ odd & $ \tr^2 Y $ & $ \tr X \tr Y $\\
      $s$ even & $ \tr X \tr Y $ & $ \tr^2 X $.\\\bottomrule
    \end{tabular}
  \end{center}

  If $ p/q, r/s $ are Farey neighbours then it is not possible for both $ q $ and $ s $ to be even since $ ps - rq \equiv 1 \pmod{2} $.
  Further, $ q-s $ is odd iff exactly one of $ p $ and $ q $ is odd, otherwise $ q-s $ is even. Thus we see that if $ q-s $ is even then
  \begin{displaymath}
    \tr W_{p/q} W_{r/s}^{-1} + \tr W_{p/q \ominus r/s} = \tr^2 Y = (\beta + 1/\beta)^2
  \end{displaymath}
  and if $ q-s $ is odd then
  \begin{displaymath}
    \tr W_{p/q} W_{r/s}^{-1} + \tr W_{p/q \ominus r/s} = \tr X \tr Y = (\alpha + 1/\alpha)(\beta + 1/\beta)
  \end{displaymath}
  which are the claimed formulae.
\end{proof}

Using the product and quotient lemmata, we can prove the desired recursion formula for the trace polynomials.

\begin{thm}[Recursion formulae]\label{thm:recursion}
  Let $ p/q $ and $ r/s $ be Farey neighbours. If $ q + s $ is even, then
  \begin{displaymath}
    \Phi_{p/q} \Phi_{r/s} + \Phi_{p/q \oplus r/s} + \Phi_{p/q \ominus r/s} = 4 + \frac{1}{\alpha^2} + \alpha^2 + \frac{1}{\beta^2} + \beta^2.
  \end{displaymath}
  Otherwise if $ q+s $ is odd, then
  \begin{displaymath}
    \Phi_{p/q} \Phi_{r/s} + \Phi_{p/q \oplus r/s} + \Phi_{p/q \ominus r/s} = 2\left( \alpha\beta + \frac{\alpha}{\beta} + \frac{\beta}{\alpha} + \frac{1}{\alpha\beta}\right).
  \end{displaymath}
\end{thm}
\begin{proof}
  Suppose $ q+s $ is even; then $ q-s $ is also even, so
  \begin{align*}
    &\Phi_{p/q} \Phi_{r/s} + \Phi_{(p+r)/(q+s)} + \Phi_{(p-r)/(q-s)}\\
      &\hspace*{5em} {}= \tr W_{p/q} \tr W_{r/s} + \tr W_{p/q \oplus r/s} + \tr W_{p/q \ominus r/s}\\
      &\hspace*{5em} {}= \tr W_{p/q} W_{r/s} + \tr W_{p/q} W_{r/s}^{-1} + \tr W_{p/q \oplus r/s} + \tr W_{p/q \ominus r/s}\\
      &\hspace*{5em} {}= 2 + \alpha^2 + \frac{1}{\alpha^2} + 2 + \beta^2 + \frac{1}{\beta^2}
  \end{align*}
  where in the final step we used \cref{lem:traceid} and \cref{lem:traceid2}. Similarly, when $ q-s $ is odd then $ q+s $ is also odd and
  \begin{align*}
    &\Phi_{p/q} \Phi_{r/s} + \Phi_{(p+r)/(q+s)} + \Phi_{(p-r)/(q-s)} \\
    &\hspace*{5em} {}= \tr W_{p/q} \tr W_{r/s} + \tr W_{p/q \oplus r/s} + \tr W_{p/q \ominus r/s}\\
    &\hspace*{5em} {}= \tr W_{p/q} W_{r/s} + \tr W_{p/q} W_{r/s}^{-1} + \tr W_{p/q \oplus r/s} + \tr W_{p/q \ominus r/s}\\
    &\hspace*{5em} {}= \alpha\beta + \frac{\alpha}{\beta} + \frac{\beta}{\alpha} + \frac{1}{\alpha\beta} + \alpha\beta + \frac{\alpha}{\beta} + \frac{\beta}{\alpha} + \frac{1}{\alpha\beta}
  \end{align*}
  as desired.
\end{proof}

\begin{cor}\label{cor:parabolic_identity}
  When both generators are parabolic, or in general whenever $ \alpha = \beta $, the recursion identity becomes
  \begin{displaymath}
    \Phi_{p/q} \Phi_{r/s} + \Phi_{(p+r)/(q+s)} + \Phi_{(p-r)/(q-s)} = 8
  \end{displaymath}
  with no parity dependence.\qed
\end{cor}
In the parabolic case this formula has been known to experts for a while but proofs of it go via either passing to a common
cover of the four-punctured sphere and the punctured torus and using the analogous identity for the latter \autocite{bowditch98,akiyoshi}, or by using directly some version of Bowditch's theory on Markoff triples \autocite{mpt15}, which we discuss further in the next section. A combinatorial
group theory approach as described above which allows the definition of the family of polynomials in a non-geometric abstract context does not appear in the literature.

The following corollary is useful when trying to draw moduli spaces of Riley groups on two elliptic generators, where it is possible that the boundary of the deformation
space is detected by nontrival substitutions of Farey words going parabolic while the Farey word in $ X $ and $ Y $ is still hyperbolic:
\begin{cor}[Recursion formula under substitution]\label{cor:substitution}
  Let $ \Gamma = \Gamma^{a,b}_z $ be endowed with the finite generating set $ \langle X, Y \rangle $ and let $ X', Y' \in \Gamma $. Suppose that there is an induced substitution
  homomorphism $ \sigma : \Gamma \to \Gamma $ obtained by extending the map defined on the generators by $ X \mapsto X' $ and $ Y \mapsto Y' $ to all words in $ X $ and $ Y $ (for
  instance, this is always true if $ \Gamma $ is free on $ X $ and $ Y $, or more generally if $ \Gamma \simeq \langle X \rangle * \langle Y \rangle $). For $ p/q \in \Q $ define
  a polynomial $ \sigma(\Phi) \in \C[z] $ to be
  \begin{displaymath}
    \sigma\left(\Phi(z)\right) = \tr \sigma\left(W_{p/q}\right)
  \end{displaymath}
  where $ \sigma $ is acting by substitution on the Farey word. Then for all Farey neighbours $ p/q $ and $ r/s $, if $ q+s $ is even then
  \begin{displaymath}
    \sigma\left(\Phi_{p/q}\right) \sigma\left(\Phi_{r/s}\right) + \sigma\left(\Phi_{p/q \oplus r/s}\right) + \sigma\left(\Phi_{p/q \ominus r/s}\right) = \tr^2 X' + \tr^2 Y'
  \end{displaymath}
  and if $ q+s $ is odd then
  \begin{displaymath}
    \sigma\left(\Phi_{p/q}\right) \sigma\left(\Phi_{r/s}\right) + \sigma\left(\Phi_{p/q \oplus r/s}\right) + \sigma\left(\Phi_{p/q \ominus r/s}\right) = 2\tr X' \tr Y'.
  \end{displaymath}
\end{cor}
\begin{proof}
  Observe that the proofs of \cref{lem:traceid} and \cref{lem:traceid2} are entirely combinatorial and one can simply
  substitute $ X \mapsto X' $ and $ Y \mapsto Y' $ to see that $ \tr W_{p/q} W_{r/s} + \tr W_{p/q \oplus r/s} $ is $ \tr^2 X $ (in the even case)
  or $ \tr X \tr Y $ (in the odd case), and $ \tr W_{p/q} W_{r/s}^{-1} + \tr W_{p/q \ominus r/s} $ is $ \tr^2 Y $ (even) or $ \tr X \tr Y $ (odd).
  The proof of the recursion formula \cref{thm:recursion} goes through directly with these new values.
\end{proof}

As an aside, \cref{fig:farey_multipliers} shows the picture we get when we just draw the edges of the Farey graph corresponding to Farey neighbours which appear as products in some iterate of the recursion:
it is a nice colouring of the Stern-Brocot tree.\footnote{There is a standard bijection in computer science between finite binary strings and vertices of the infinite binary tree; the Stern-Brocot tree
is the tree structure on $ I = (0,1] \inter \Q $ obtained by writing every $ x \in I $ as the image of $ 1/1 $ under a sequence of $ R$'s and $L$'s, where $ R $ and $ L $ are the generators of $ \Gamma_1 $,
the orientation-preserving part of $ \PSL(2,\Z) $ defined at the beginning of \cref{sec:farey}. For a more explicit discussion, see \autocite[Section~6.7]{knuthCM}.}

\begin{figure}
  \centering
  \includegraphics[width=\textwidth]{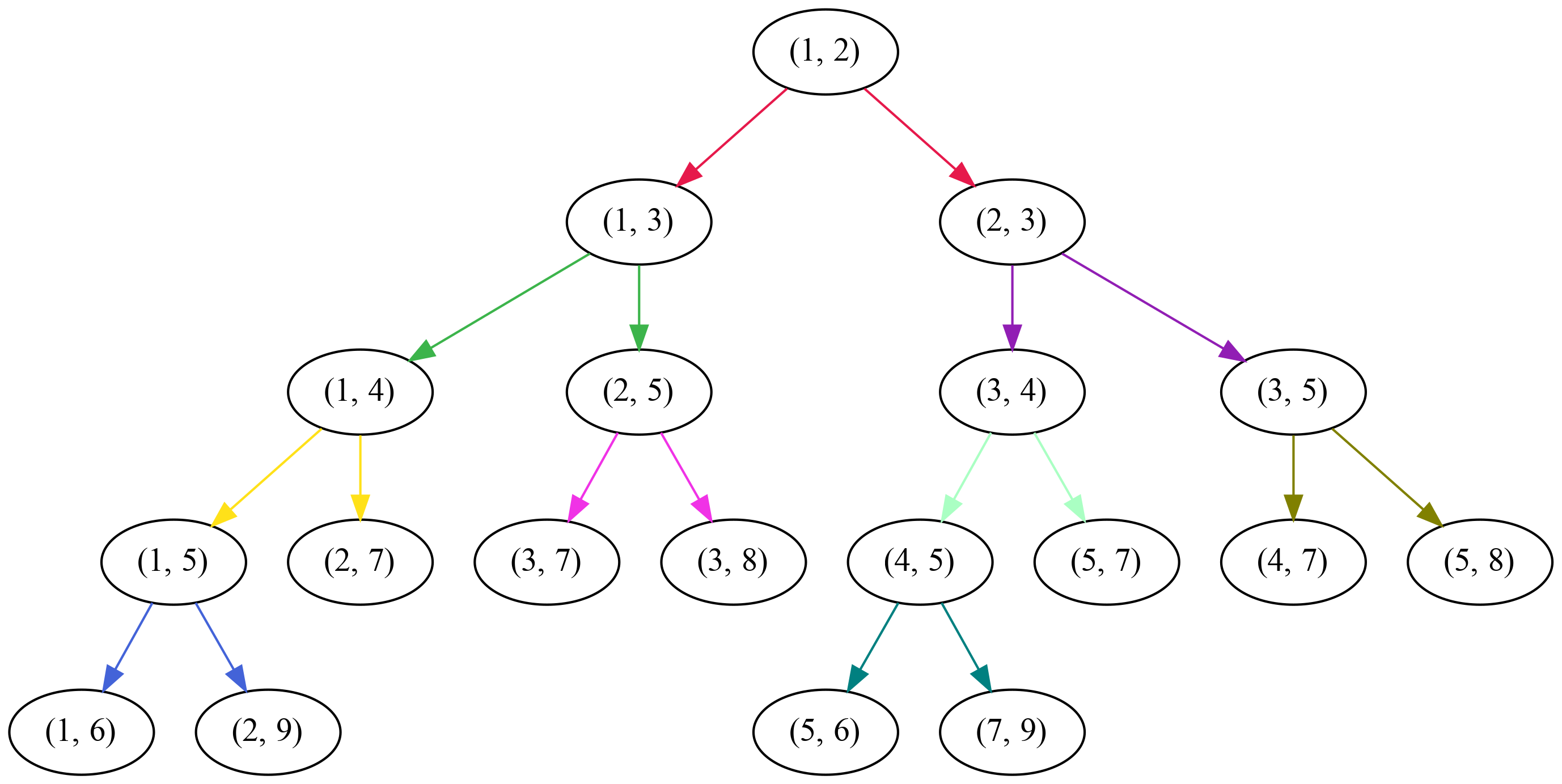}
  \caption{The induced colouring of the Stern-Brocot tree.\label{fig:farey_multipliers}}
\end{figure}

Observe that $ 0/1 $ and $ 1/0 $ are Farey neighbours in $ \hat{\Q} $. Thus, applying the recursion rule \emph{formally} to the diamond
\begin{center}
  \begin{tikzpicture}
    \node[shape=circle, fill=black, minimum size=5pt, inner sep=0pt, outer sep=0pt, label={above:$1/0$}] (A) at (0,1/2) {};
    \node[shape=circle, fill=black, minimum size=5pt, inner sep=0pt, outer sep=0pt, label={below:$1/2$}] (D) at (0,-1/2) {};
    \node[shape=circle, fill=black, minimum size=5pt, inner sep=0pt, outer sep=0pt, label={left:$0/1$}] (B) at (-1/2,0) {};
    \node[shape=circle, fill=black, minimum size=5pt, inner sep=0pt, outer sep=0pt, label={right:$1/1$}] (C) at (1/2,0) {};

    \draw[-stealth'] (A)--(B);
    \draw[-stealth'] (A)--(C);
    \draw[-stealth'] (B)--(C);
    \draw[-stealth'] (B)--(D);
    \draw[-stealth'] (C)--(D);
  \end{tikzpicture}
\end{center}
we obtain
\begin{displaymath}
  \Phi_{0/1} \Phi_{1/1} + \Phi_{1/0} + \Phi_{1/2} = 4 + \frac{1}{\alpha^2} + \alpha^2 + \frac{1}{\beta^2} + \beta^2;
\end{displaymath}
substituting for $ \Phi_{1/1} $, $ \Phi_{1/2} $, and $ \Phi_{0/1} $ from \cref{tab:fareys} we get the following formal expression for $ \Phi_{1/0} $:
\begin{align*}
  \Phi_{1/0} &= 4 + \frac{1}{\alpha^2} + \alpha^2 + \frac{1}{\beta^2} + \beta^2 - \left(\frac\alpha\beta + \frac\beta\alpha - z\right) \left(\alpha\beta + \frac{1}{\alpha\beta} +z\right) - 2&\\
                & {\phantom{{}=4 + \frac{1}{\alpha^2} + \alpha^2 + \frac{1}{\beta^2}+ \beta^2}} - \left(\alpha\beta -\frac\alpha\beta - \frac\beta\alpha + \frac{1}{\alpha\beta}\right)z - z^2\\
    &= 2.
\end{align*}
Observe that $ \Phi_{1/0}^{-1}\left( (-\infty,-2] \right) = \emptyset $, so this is compatible with the Keen--Series theory; it is also a polynomial of degree $ q $ (here, $ q = 0 $)
with constant term $ 2 $, which all agrees with the properties of the higher-degree polynomials. On the other hand, it is not monic!

\section{Commutators of Farey words}\label{sec:commutators}
Recall \textbf{Fricke's identity} for $ A, B \in \PSL(2,\C) $ \autocite[(3.15)]{maclauchlanreid}:
\begin{displaymath}
  \tr [A,B] = \tr^2 A + \tr^2 B + \tr^2 AB - \tr A \tr B \tr AB - 2.
\end{displaymath}
As a direct consequence of \cref{lem:traceid}, we obtain the following special case for Farey words:
\begin{lem}
  If $p/q $ and $ r/s $ are Farey neighbours with $ p/q < r/s $, then
  \begin{multline*}
    \tr [W_{p/q},W_{r/s}] = \Phi_{p/q}^2 + \Phi_{r/s}^2 +\Phi_{(p/q)\oplus(r/s)}^2 + \Phi_{p/q}\Phi_{r/s}\Phi_{(p/q)\oplus(r/s)}\\
    + \kappa(p/q,r/s)^2 -\kappa(p/q,r/s)\left(\Phi_{p/q}\Phi_{r/s} + 2\Phi_{(p/q)\oplus(r/s)}\right) -2;
  \end{multline*}
  where
  \begin{displaymath}
    \kappa(p/q,r/s) \coloneq   \begin{cases}
      2 + \alpha^2 + \frac{1}{\alpha^2} & \text{if $ q+s $ is even,} \\
      \alpha\beta + \frac{\alpha}{\beta} + \frac{\beta}{\alpha} + \frac{1}{\alpha\beta} & \text{if $ q+s $ is odd.}
    \end{cases}
  \end{displaymath}

  In particular, if $ \alpha = \beta = 1 $, then
  \begin{multline}\label{eq:complicated_positive}
    \tr [W_{p/q},W_{r/s}] = \Phi_{p/q}^2 + \Phi_{r/s}^2 +\Phi_{(p/q)\oplus(r/s)}^2 + \Phi_{p/q}\Phi_{r/s}\Phi_{(p/q)\oplus(r/s)}\\ -4\left(\Phi_{p/q}\Phi_{r/s} + 2\Phi_{(p/q)\oplus(r/s)}\right) +14.
  \end{multline}
\end{lem}

In the case of the Maskit slice, the commutator of the equivalent of neighbouring Farey words (`slope words') is always $ -2 $, since in that case the commutator represents a closed curve
about the puncture on the torus. Combining the Fricke identity with the equivalent to \cref{lem:traceid} in this case gives that Farey triplets $ (f,g,h)$ of the trace polynomials of the slope words on the punctured torus
satisfy the \textbf{Markoff equation}
\begin{displaymath}
  f^2 + g^2 + h^2 = 3fgh.
\end{displaymath}
A little experimentation shows that no nice identity holds for the commutators of Farey words in a Riley group---nor should we expect any, since the product of two Farey words is usually
strictly loxodromic for any fixed $ \rho $. There are Markoff-like theories for the 4-times punctured sphere, and one is the theory of Beardon which depends on picking hyperbolic representatives
for the simple closed curves \autocite{beardon86}, but the more relevant to us is the theory of Maloni, Palesi, and Tan \autocite{mpt15} which (as we briefly mentioned above) recovers our
recursion: in \S 2.8 \textit{op. cit.}, take $ \boldsymbol{\mu} = (p,q,r,s) = (8,8,8,-28) $ to recover the recursion (their Equations (5)--(7)) and so obtain the $\boldsymbol{\mu}$-Markoff equation
\begin{displaymath}
  f^2 + g^2 + h^2 + fgh = 8f + 8g + 8h - 28.
\end{displaymath}
In other words we apply their theory to representations of the four-holed sphere group on the four generators $ \alpha,\beta,\gamma,\delta $ into $ \SL(2,\C) $ such that the first two generators go
to a parabolic $ X $ and its inverse and the second go to a parabolic $ Y $ and its inverse ($\tr X = \tr Y = 2 $); this is defined geometrically by taking the topological loops corresponding to the
generators of the four-holed sphere group and `pushing' them into the interior of the 3-manifold $ \H^3/\Gamma_\rho $.

In any case we can generalise in a combinatorial manner the process used to compute the commutator in the Maskit case to obtain a second expression for the commutator (and bypass the Markoff theory)
in the Riley slice case by using both parts of \cref{lem:product_word} together. By that lemma,
\begin{displaymath}
  [W_{p/q},W_{r/s}] = W_{p/q} W_{r/s} (W_{r/s} W_{p/q})^{-1} = \widehat{W}_{(p/q) \oplus (r/s)} \cdot \tilde{W}_{(p/q) \oplus (r/s)}^{-1}
\end{displaymath}
where the hat here indicates that the $ (q+s)$th exponent is flipped in sign and the tilde indicates a flip in the $ (2s)$th exponent. We therefore have two cases: either $ q + s < 2s $,
or $ 2s < q + s $. Without loss of generality, assume that $ q + s < 2s $ and write $ W_{p/q \oplus r/s} = u_1 t u_2 t' u_3 $ where $ u_1 $ is the word made up of the first $ (q+s-1) $ letters,
$ t $ is the $ (q+s)$th letter, $ u_2 $ is the $ (q+s+1)$th to $ (2s-1)$th letters, $ t' $ is the $ (2s)$th letter, and $ u_3 $ is the remainder
of the word; then
\begin{multline*}
  \tr [W_{p/q}, W_{r/s}] = \tr u_1 T u_2 t' u_3 (u_1 t u_2 T' u_3)^{-1}\\= \tr u_1 T u_2 t' u_3 U_3 t' U_2 T U_1 = \tr u_1 T u_2 (t')^2 U_2 T U_1.
\end{multline*}
By rotational symmetry, this last trace is equal to $ \tr u_2 (t')^2 U_2 T^2 $. By assumption, $ t $ and $ t' $ are parabolic so there exists some transformation $ M \in \PSL(2,\C) $ (perhaps the identity) such
that $ mt'M = t $. Writing $ u = u_2 m $, the trace becomes $ \tr u t^2 U T^2 = \tr[u,t^2] $. We now use the Fricke identity again, to see that
\begin{align*}
  \tr [u,t^2] &= \tr^2 u + \tr^2 t^2 + \tr^2 ut^2 - \tr u \tr t^2 \tr ut^2 - 2\\
              &= \tr^2 u + \tr^2 ut^2 - 2\tr u \tr ut^2 + 2\\
              &= (\tr u - \tr ut^2)^2 + 2.
\end{align*}

This proves the following result, which may be known to experts:
\begin{lem}
  If $ p/q $ and $ r/s $ are Farey neighbours, then $ \tr [W_{p/q}(z), W_{r/s}(z)] -2 $ is a square in $ \Z[z] $. \qed
\end{lem}
As an amusing corollary, we see that the complicated expression to the right of \cref{eq:complicated_positive} is a positive function.

\section{Closed-form solutions for some cases}\label{sec:properties}
Recall that the \textbf{Chebyshev polynomials} (of the first kind) are the family of polynomials $ T_n $ defined via the recurrence relation
\begin{gather*}
  T_0(x) = 1\\
  T_1(x) = x\\
  T_{n+1}(x) = 2x T_n(x) - T_{n-1}(x).
\end{gather*}
It is well-known that these polynomials satisfy the product relation
\begin{displaymath}
  2T_m(x) T_n(x) = T_{m+n}(x) + T_{\abs{m-n}}(x)
\end{displaymath}
for $ m,n \in \Z_{\geq 0} $. Compare this relation with the parabolic recurrence relation of \cref{cor:parabolic_identity} (but note that the Chebyshev product rule
holds for all $ m,n $ and the identities for the Farey polynomials hold only for Farey neighbours).

We may apply the theory of `Farey recursive functions' \autocite{chesebro19,chesebro20} in order to explain this analogy.
The following diagram may be useful for translating the notation of that paper (right: $ \gamma \oplus^2 \alpha $ is their
notation for $ \gamma \oplus \alpha \oplus \alpha $) into the notation we use here (left):
\begin{center}
  \begin{tikzpicture}
    \node[shape=circle, fill=black, minimum size=5pt, inner sep=0pt, outer sep=0pt, label={above:$\beta \ominus \alpha$}] (A) at (-1,1/2) {};
    \node[shape=circle, fill=black, minimum size=5pt, inner sep=0pt, outer sep=0pt, label={below:$\beta \oplus \alpha$}] (D) at (-1,-1/2) {};
    \node[shape=circle, fill=black, minimum size=5pt, inner sep=0pt, outer sep=0pt, label={left:$\alpha$}] (B) at (-3/2,0) {};
    \node[shape=circle, fill=black, minimum size=5pt, inner sep=0pt, outer sep=0pt, label={right:$\beta$}] (C) at (-1/2,0) {};

    \draw[-stealth'] (A)--(B);
    \draw[-stealth'] (A)--(C);
    \draw[-stealth'] (B)--(C);
    \draw[-stealth'] (B)--(D);
    \draw[-stealth'] (C)--(D);

    \node[shape=circle, fill=black, minimum size=5pt, inner sep=0pt, outer sep=0pt, label={above:$\gamma$}] (A') at (1,1/2) {};
    \node[shape=circle, fill=black, minimum size=5pt, inner sep=0pt, outer sep=0pt, label={below:$\gamma \oplus^2 \alpha$}] (D') at (1,-1/2) {};
    \node[shape=circle, fill=black, minimum size=5pt, inner sep=0pt, outer sep=0pt, label={left:$\alpha$}] (B') at (1/2,0) {};
    \node[shape=circle, fill=black, minimum size=5pt, inner sep=0pt, outer sep=0pt, label={right:$\gamma \oplus \alpha$}] (C') at (3/2,0) {};

    \draw[-stealth'] (A')--(B');
    \draw[-stealth'] (A')--(C');
    \draw[-stealth'] (B')--(C');
    \draw[-stealth'] (B')--(D');
    \draw[-stealth'] (C')--(D');
  \end{tikzpicture}
\end{center}

Chesebro et. al. \autocite[Definition 3.1]{chesebro20} define the notion of a Farey-recursive function: this is a function on $ \Q $ which satisfies a recurrence on the Farey diagram of a
particular form. In order to incorporate our polynomials into their framework, we need to make a slight generalisation in order to allow for constant terms in the recurrence
relation.

\begin{defn}
  Let $ R $ be a (commutative) ring, and suppose $ d_1,d_2,d_3 : \hat{\Q} \to R $. A function $ \mc{F} : \hat{\Q} \to R $ is a \textbf{$(d_1,d_2)$-Farey recursive function} if, whenever
  $ \alpha,\beta \in \hat{\Q} $ are Farey neighbours,
  \begin{displaymath}
    \mc{F}(\beta\oplus \alpha) = -d_1(\alpha) \mc{F}(\beta\ominus\alpha) + d_2(\alpha) \mc{F}(\beta) + d_3(\alpha).
  \end{displaymath}
  If $ d_3 $ is the zero function, we say that the recurrence relation is homogeneous.
\end{defn}

\begin{ex}\label{ex:farey_are_recursive}
  The parabolic Farey polynomials are Farey recursive functions. To make this clearer, we rewrite our recurrence relation slightly as
  \begin{displaymath}
    \Phi(\beta \oplus \alpha) = 8 - \Phi(\beta \ominus \alpha) - \Phi(\alpha) \Phi(\beta)
  \end{displaymath}
  (where we set $ \beta = p/q $, $ \alpha = r/s $, and changed from subscript notation to functional notation). In our case, then,
  $ d_1(\alpha) $ is constantly 1 and $ d_2 = -\Phi $.
\end{ex}

One easily obtains the relevant generalisations of the existence-uniqueness results of \autocite[Section 4]{chesebro20} (the same proofs work, with the usual
property that the space of non-homogeneous solutions is the sum of a particular solution and the space of homogeneous solutions).

There is an obvious explicit solution to the non-homogeneous Farey polynomial recursion in \cref{ex:farey_are_recursive}: namely, the
map $ \Phi $ which sends every $ \alpha \in \Q $ to the constant polynomial $ 2 \in \Z[z] $. It therefore remains to solve the
corresponding homogeneous equation in any case of interest.

\subsection{A Fibonacci-like sequence of homogeneous Farey polynomials}\label{sec:diagonalise}

\begin{notation}
  In this section, we work exclusively with the parabolic Farey polynomials, $ \Phi^{\infty,\infty}_{p/q} $. We will reuse the symbols
  $ \alpha $ and $ \beta $ for rational numbers (since the roots of unity used to define $ X $ and $ Y $ become 1 in this situation and
  so there is no need for special notation for them).
\end{notation}

In \cref{tab:farey_homog}, we list the first few polynomials $ \Phi^h $ which solve the homogeneous recursion relation
\begin{equation}\label{eqn:identity_frf_homogeneous}
  \Phi^h(\beta \oplus \alpha) = - \Phi^h(\beta \ominus \alpha) - \Phi^h(\alpha) \Phi^h(\beta)
\end{equation}
with the initial values $ \Phi^h(0/1) = 2-z $, $ \Phi^h(1/0) = 2 $, and $ \Phi^h(1/1) = 2+z $.

\begin{table}
  \centering
  \caption{Selected polynomials $ \Phi^h $ satisfying the homogeneous recursion relation for small $ q $, with the initial values as given.\label{tab:farey_homog}}
  \begin{tabular}{ccp{0.8\textwidth}}
    \toprule
    $p$ & $q$ & $ \Phi^h_{p/q} $ \\\midrule
    1&0&$2$\\
    0&1&$-z+2$\\
    1&1&$z+2$\\
    1&2&$z^2-6$\\
    1&3&$z^3-2z^2-7z+10$\\
    2&3&$-z^3-2z^2+7z+10$\\
    1&4&$z^4 - 4 z^3 - 4 z^2 + 24 z - 14$\\
    3&4&$z^4 + 4 z^3 - 4 z^2 - 24 z - 14$\\
    1&5&$z^5 - 6 z^4 + 3 z^3 + 34 z^2 - 55 z + 18$\\
    2&5&$-z^5 + 2 z^4 + 13 z^3 - 22 z^2 - 41 z + 58$\\
    3&5&$z^5 + 2 z^4 - 13 z^3 - 22 z^2 + 41 z + 58$\\
    4&5&$-z^5 - 6 z^4 - 3 z^3 + 34 z^2 + 55 z + 18$\\
    1&6&$z^6-8 z^5+14 z^4+32 z^3-119 z^2+104 z-22$\\
    1&7&$z^7-10 z^6+29 z^5+10 z^4-186 z^3+308 z^2-175 z+26$\\
    1&8&$z^8-12 z^7+48 z^6-40 z^5-220 z^4+648 z^3-672 z^2+272 z-30$\\
    1&9&$z^9 - 14 z^8 + 71 z^7 - 126 z^6 - 169 z^5 + 1078 z^4 - 1782 z^3 +  1308 z^2 - 399 z + 34$\\
    1&10&$z^{10} - 16 z^9 + 98 z^8 - 256 z^7 + 35 z^6 +  1456 z^5 - 3718 z^4 + 4224 z^3 - 2343 z^2 + 560 z - 38$\\\bottomrule
 \end{tabular}
\end{table}
The polynomials with numerator 1 listed in the table have very nice properties: immediately one sees that the constant terms alternate in sign and increase in magnitude by 4 each time;
also, we have that $ \Phi^h_{1/q}(1) $ cycles through the values $ 3, -5, 2 $, $ \Phi^h_{1/q}(2) $ cycles through the values $ 4, -2, -4, 2 $; and when we evaluate at 3 and 4
we get a 6-cycle and an arithmetic sequence of step 4 respectively. When we consider $ \Phi^h(1/q)(5) $, though, we obtain more interesting behaviour: this is OEIS
sequence A100545\footnote{\url{http://oeis.org/A100545}} and satisfies the Fibonacci-type relation
\begin{displaymath}
  \Phi^h_{1/q}(5) = 3\Phi^h_{1/(q-1)}(5) - \Phi^h_{1/(q-2)}(5) \quad \text{ with } \Phi^h_{1/1}(5)=7, \Phi^h_{1/2}(5)=19.
\end{displaymath}
Of course, from the way that we defined the $ \Phi^h $ such types of relations ought to be expected. In this section, we use the standard diagonalisation technique to explain the behaviour
of the sequence $ a_q \coloneq \Phi^h(1/q)(z) $ for fixed $ z \in \C $. From \cref{eqn:identity_frf_homogeneous}, we have that
\begin{displaymath}
  a_q = -(2-z)a_{q-1} - a_{q-2}.
\end{displaymath}
We may rewrite this equation in matrix form as the following:
\begin{displaymath}
  \begin{bmatrix}
    0 & 1\\
    -1 & z-2
  \end{bmatrix}
  \begin{bmatrix} a_{q-2} \\ a_{q-1} \end{bmatrix} = \begin{bmatrix} a_{q-1}  \\ a_q \end{bmatrix}.
\end{displaymath}
One easily computes that the eigenvalues of the transition matrix are
\begin{displaymath}
  \lambda^{\pm} = \frac{z-2\pm \alpha}{2}
\end{displaymath}
(where $ \alpha = \sqrt{z^2-4z} $) with respective eigenvectors
\begin{displaymath}
  v^\pm = \begin{bmatrix} z-2 \mp \alpha \\ 2 \end{bmatrix}
\end{displaymath}
(note the alternated sign). Thus the transition matrix may be diagonalised as
\begin{equation}\label{eqn:diagonalised}
  \frac{-1}{2\alpha}
  \begin{bmatrix} z-2 - \alpha & z-2 + \alpha \\ 2 & 2 \end{bmatrix}
  \begin{bmatrix} \frac{z-2 + \alpha}{2} & 0 \\ 0 & \frac{z-2 - \alpha}{2} \end{bmatrix}
  \begin{bmatrix} 2 & 2-z - \alpha \\ -2 & z-2 - \alpha \end{bmatrix}
\end{equation}
and so $ a_q $ is the first coordinate of
\begin{displaymath}
  \frac{-1}{2\alpha}
  \begin{bmatrix} z-2 - \alpha & z-2 + \alpha \\ 2 & 2 \end{bmatrix}
  \begin{bmatrix} \left(\frac{z-2 + \alpha}{2}\right)^q & 0 \\ 0 & \left(\frac{z-2 - \alpha}{2}\right)^q \end{bmatrix}
  \begin{bmatrix} 2 & 2-z - \alpha \\ -2 & z-2 - \alpha \end{bmatrix}
  \begin{bmatrix}
    a_0\\
    a_1
  \end{bmatrix};
\end{displaymath}
expanding out, we get the general solution
\begin{displaymath}
  \begin{aligned}
  a_q &= 2^{-1-q} \alpha^{-1} \bigg( \left(a_0\left(z-2+\alpha\right) -2a_1\right) \left(z-2 - \alpha\right)^q\\
      &\hspace{6em}{}+ \left(a_0\left(2-z+\alpha\right) + 2a_1\right) \left(z-2 + \alpha\right)^q\bigg).
  \end{aligned}
\end{displaymath}

We may also characterise the $ z $ for which $ \Phi^h_{1/q}(z) $ is cyclic: this occurs precisely when the diagonal matrix
of \cref{eqn:diagonalised} is of finite order, i.e. whenever both $ (z-2 \pm \alpha)/2 $ are roots of unity.

\begin{ex}
  As an application of the theory above, we have seen that the Chebyshev polynomials also satisfy a second-order recurrence relation with transition matrix
  \begin{displaymath}
    \begin{bmatrix} 0 & 1 \\ -1 & 2x \end{bmatrix}.
  \end{displaymath}
  If we set $ z = 2x + 2 $, then we get back the transition matrix we derived for our homogeneous recurrence. Thus our sequence $ \Phi^h_{1/q}(z) $ for
  fixed $ z $ is of the form $ W_q(\frac{z-2}{2}) $ where $ W_q $ is the $q$th Chebyshev polynomial in the sequence beginning with $ W_0 = 2x $ and $ W_1 = 2x + 4 $.
\end{ex}

Finally, we consider the solution of the non-homogeneous equation for $ \Phi_{1/q} $. Above, we observed that there is a constant solution to the global recursion
relation on the entire Stern-Brocot tree; we therefore guess that there is a similar solution to this recursion. Such a solution $ f $ will satisfy
\begin{displaymath}
  8 = f(z) + (2-z)f(z) + f(z)
\end{displaymath}
and arithmetic gives $ f(z) = 8/(4-z) $. Combining this with the general solution above gives us the following general solution to the non-homogeneous relation:
\begin{displaymath}
  \begin{aligned}
  a_q &= \frac{8}{4-z} + 2^{-1-q}\alpha^{-1} \bigg((\lambda(z-2+\alpha) -2\mu) (z-2 - \alpha)^q\\
  &\hspace{10em}{}+ (\lambda(2-z+\alpha) + 2\mu) (z-2 + \alpha)^q\bigg).
  \end{aligned}
\end{displaymath}
In our case, we have $ a_0 = \Phi_{1/0}(z) = 2 $ and $ a_1 = \Phi_{1/1}(z) = 2+z $. Solving the resulting system of equations gives
\begin{displaymath}
  (\lambda,\mu) = \left(\frac{2z}{z-4}, \frac{2z-z^2}{z-4}\right)
\end{displaymath}
and hence
\begin{displaymath}
  a_q = \frac{8}{4 - z} + \frac{2^{-q}z}{z-4} \left( (-2 + z - \sqrt{z^2 -4 z})^q + (-2 + z + \sqrt{z^2 -4 z})^q \right)
\end{displaymath}

\subsection{Solving the homogeneous recursion relation in general}
In the previous section, we computed a closed form formula for $ \Phi^h_{1/q}(n) $ using standard techniques from the theory of second-order
linear recurrences. We now tackle the general problem of finding a closed-form formula for $ \Phi^h_{p/q}(n) $; in order to do this, we use
the theory of Section 6 of \autocite{chesebro20} but with a slight modification: in that paper, the authors define a special case of Farey recursive function,
a Farey recursive function of determinant $ d $ (where $ d : \hat{\Q} \to R $), to be a Farey recursive function $ \mc{F} $ with $ d_1 = d $ and $ d_2 = \mc{F} $.
Our situation is very similar, except that instead of $ d_2 = \mc{F} $ we have $ d_2 = -\mc{F} $. To reflect this, we will call a Farey recursive function satisfying
a relation of the form
\begin{equation}\label{eqn:antidenom}
  \mc{F}(\beta\oplus \alpha) = -d(\alpha) \mc{F}(\beta\ominus\alpha) - \mc{F}(\alpha) \mc{F}(\beta).
\end{equation}
a Farey recursive function of \emph{anti}-determinant $ d $. We will work for the time being in this
setting (i.e. we will work with the general function $ \mc{F} $ rather than the particular
example $ \Phi $) in order to restate in sufficient generality the theorem which we need (Theorem 6.1 of \autocite{chesebro20}).

Let $ \alpha \in \Q $. The boundary sequence $ \partial(\alpha) $ is defined inductively by the process of `continuing to expand down the Farey graph by constant steps'.
More precisely, let $ \beta \oplus^k \gamma $ denote $ ((\beta \underbrace{\oplus \gamma) \oplus \cdots) \oplus \gamma}_{k\text{ iterates}} $ for $ \beta,\gamma \in \hat{\Q} $
and let $ \gamma_L,\gamma_R $ be the unique Farey neighbours such that $ \alpha = \gamma_L \oplus \gamma_R $; then we set
\begin{displaymath}
  \partial(\alpha) \coloneq \{ \gamma_L \oplus^k \alpha : k \in \Z_{\geq 0} \} \cup \{ \gamma_R \oplus^k \alpha : k \in \Z_{\geq 0} \}.
\end{displaymath}
If we allow the Farey graph to embed in the Euclidean upper halfplane by sending $ \Q \ni p/q \mapsto (p/q,1/q) \in \H^2 $ (note: this is not the Farey triangulation), then except for the exceptional cases $ \alpha = 1/0 $
and $ \alpha = n/1 $ for $ n \in \Z $ the subgraph spanned by $ \partial(\alpha) $ corresponds to a Euclidean triangle containing $ \alpha $ in its interior, see
Figure 3 of \autocite{chesebro20}; for example, the triangle spanned by $ \partial(1/n) $ is the triangle with vertices $ 0, (1/2,1/2), 1 $.

It will be useful to have specific names for the terms in each of the two subsequences and so we set, for $ k \in \Z $,
\begin{equation}\label{eqn:betas}
  \beta_{k} \coloneq
  \begin{cases}
    \gamma_L \oplus^{-k-1} \alpha & \text{if $ k < -1 $}\\
    \gamma_L & \text{if $ k = -1 $}\\
    \gamma_R & \text{if $ k = 0 $}\\
    \gamma_R \oplus^k \alpha & \text{if $ k > 0 $}.
  \end{cases}
\end{equation}

For every $ \alpha \in \Q $, define
\begin{equation}\label{eqn:bdry_matrix}
  M_\alpha = \begin{bmatrix} 0 & 1 \\ -d(\alpha) & \mc{F}(\alpha) \end{bmatrix}.
\end{equation}
Given any Farey neighbour $ \beta $ of $ \alpha $, we have
\begin{displaymath}
  M_\alpha^n \begin{bmatrix} \mc{F}(\gamma \oplus^0 \alpha)\\\mc{F}(\gamma\oplus^1\alpha) \end{bmatrix} = \begin{bmatrix} \mc{F}(\gamma \oplus^n \alpha)\\\mc{F}(\gamma\oplus^{n+1}\alpha) \end{bmatrix}
\end{displaymath}
and so the recursion \cref{eqn:antidenom} is equivalent to a family of second-order linear recurrences, one down $ \partial(\alpha) $ for each $ \alpha $.

We may now state the following theorem:
\begin{thm}[Adaptation of Theorem 6.1 of \autocite{chesebro20}]\label{thm:round_the_triangles}
  Let $ d : \hat{\Q} \to R $ be a multiplicative function (in the sense that $ d(\gamma\oplus\beta)=d(\gamma)d(\beta)$ for all pairs of Farey neighbours $\beta,\gamma\in\Q $) to a
  commutative ring $ R $, such that $ d(\hat{\Q}) $ contains no zero divisors. Suppose that $ \mc{F} $ is a Farey recursive function with anti-determinant $ d $. Given $ \alpha \in \Q $,
  define $ M_\alpha $ as in \cref{eqn:bdry_matrix} and $ (\beta_k)_{k\in\Z} $ as in \cref{eqn:betas}.

  Then, for all $ n \in \Z $,
  \begin{displaymath}
    M_\alpha^n  \begin{bmatrix} \mc{F}(\beta_0) \\ \mc{F}(\beta_1) \end{bmatrix} =
    \begin{cases}
      \begin{bmatrix} \mc{F}(\beta_n)\\\mc{F}(\beta_{n+1})\end{bmatrix} & n \geq 0,\\[1.5em]
      \begin{bmatrix} \frac{1}{d(\beta_{-1})} \mc{F}(\beta_{-1}) \\ \mc{F}(\beta_0) \end{bmatrix} & n = -1,\text{ and}\\[1.5em]
      \begin{bmatrix} \frac{1}{d(\beta_{-1}) d_{\alpha}^{-n-1}} \mc{F_{\beta_n}} \\ \frac{1}{d(\beta_{-1}) d_{\alpha}^{-n-2}} \mc{F_{\beta_{n+1}}} \end{bmatrix} & n < -1.
    \end{cases}
  \end{displaymath}
\end{thm}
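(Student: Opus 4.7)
The plan is to follow the proof strategy of Theorem~6.1 of \autocite{chesebro20}, modifying it to accommodate the sign convention for the anti-determinant and the asymmetry in the boundary sequence near its midpoint. The argument naturally splits along the three cases appearing in the statement, and in each case the key is to pair the abstract recurrence encoded by $M_\alpha$ with the concrete anti-determinant recurrence \cref{eqn:antidenom}.

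For $n \geq 0$ I would argue by induction on $n$, with the base case $n = 0$ a tautology. For the inductive step, the key observation is that for $k \geq 0$ the pair $(\beta_k, \alpha)$ are Farey neighbours with $\beta_k \oplus \alpha = \beta_{k+1}$, and moreover $\beta_k \ominus \alpha = \beta_{k-1}$; the borderline case $k = 0$ uses the identity $\gamma_R \ominus \alpha = \gamma_L$, which is immediate from $\gamma_L \oplus \gamma_R = \alpha$ by taking coordinates. Substituting into \cref{eqn:antidenom} yields the second order linear recurrence $\mathcal{F}(\beta_{k+1}) = -d(\alpha)\mathcal{F}(\beta_{k-1}) - \mathcal{F}(\alpha)\mathcal{F}(\beta_k)$, which is precisely what iteration of $M_\alpha$ computes, and the claim follows by standard induction.

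For $n = -1$ I would compute $M_\alpha^{-1}$ explicitly. Since $\det M_\alpha = d(\alpha)$ and $d(\hat{\Q})$ contains no zero divisors, the inverse exists in the ring of fractions. Applying $M_\alpha^{-1}$ to the seed vector and invoking the $k = 0$ instance of the recurrence solved for $\mathcal{F}(\beta_{-1}) = -d(\alpha)^{-1}(\mathcal{F}(\beta_1) + \mathcal{F}(\alpha)\mathcal{F}(\beta_0))$ identifies the first coordinate, up to a scalar, as $\mathcal{F}(\beta_{-1})$. The specific factor $1/d(\beta_{-1})$ appearing in the statement then emerges from multiplicativity of $d$ on Farey neighbours, via $d(\alpha) = d(\beta_{-1}) d(\beta_0)$.

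For $n < -1$ I would induct on $|n|$, and this is the step I expect to be the main obstacle. The subtlety is conceptual: the recurrence governing $\mathcal{F}(\beta_n)$ for $n < -1$ really lives in a shifting sequence of Farey triangles $(\alpha,\, \gamma_L \oplus^{j}\alpha,\, \gamma_L \oplus^{j+1}\alpha)$, so the forward form of the recurrence built into $M_\alpha$ does not immediately apply. Each backward step therefore introduces a further factor of $d(\alpha)^{-1}$ in the first coordinate while the second coordinate lags by one; multiplicativity of $d$ along each Farey neighbour pair then collects these into the exponents $d_\alpha^{-n-1}$ and $d_\alpha^{-n-2}$ stated in the theorem, with no corrections required beyond the universal $1/d(\beta_{-1})$ inherited from the $n = -1$ case. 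Carefully tracking which triangle each step lives in, and verifying that multiplicativity exactly cancels the inconvenient intermediate factors, is where most of the bookkeeping effort goes.
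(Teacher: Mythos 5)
Your structure (forward induction for $n \geq 0$, explicit inverse at the seed, downward induction thereafter) matches the paper's, and the $n \geq 0$ case is fine. The genuine gap is in how you cross the initial triangle, i.e.\ the cases $n=-1$ and $n=-2$. You invoke the relation $\mc{F}(\beta_1) = -d(\alpha)\mc{F}(\beta_{-1}) - \mc{F}(\alpha)\mc{F}(\beta_0)$, with coefficient $d(\alpha)$, obtained by reading \cref{eqn:antidenom} with $\alpha$ in the ``added'' slot. But in the Chesebro-style convention that this theorem adapts (and that makes the statement non-trivial), the defining relation attached to the diamond with bottom vertex $\beta_1=\gamma_R\oplus\alpha$ is the one evaluated at the parent of smaller denominator, namely $\gamma_R=\beta_0$: it reads $\mc{F}(\beta_1) = -d(\beta_0)\mc{F}(\beta_{-1}) - \mc{F}(\alpha)\mc{F}(\beta_0)$, and similarly the diamond below $\beta_{-2}=\gamma_L\oplus\alpha$ carries the coefficient $d(\beta_{-1})$, not $d(\alpha)$. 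This is exactly what \cref{lem:round_the_triangles_prelim} uses, together with $d(\alpha)=d(\beta_{-1})d(\beta_0)$. With your $d(\alpha)$-form of the crossing relation, the first coordinate of $M_\alpha^{-1}$ applied to the seed comes out as plainly $\mc{F}(\beta_{-1})$, with no factor at all, and multiplicativity cannot subsequently manufacture the $1/d(\beta_{-1})$ of the statement; the factor arises only because the correct crossing relation produces $d(\beta_0)/d(\alpha)=1/d(\beta_{-1})$. (If one really imposed \cref{eqn:antidenom} for both orderings of each neighbour pair, the two readings would force $(d(\alpha)-d(\beta_0))\mc{F}(\beta_{-1})=0$ on every triangle, collapsing the role of $d$ entirely; so the symmetric reading cannot be the intended hypothesis.)

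The same issue propagates into your $n<-1$ induction: you assume that every backward step contributes a uniform $d(\alpha)^{-1}$, but the very first such step, the one computing $\mc{F}(\beta_{-2})$, is governed by the $d(\beta_{-1})$-relation, not the $d(\alpha)$-relation; only from $\beta_{-3}$ downward does the fan around $\alpha$ obey the uniform recurrence $\mc{F}(\beta_{n-1}) = -d(\alpha)\mc{F}(\beta_{n+1}) - \mc{F}(\alpha)\mc{F}(\beta_n)$. This is why the paper treats both $n=-1$ and $n=-2$ as separate base cases in \cref{lem:round_the_triangles_prelim} and only then inducts for $n<-2$ by left-multiplying by $M_\alpha^{-1}$. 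To repair your argument you must (i) state and use the asymmetric form of the defining recurrence at the two diamonds adjacent to the seed triangle (the analogues of Equations (8)--(10) of \autocite{chesebro20}), and (ii) begin the downward induction at $n=-2$ rather than $n=-1$; as written, your plan derives a formula for $n\le -1$ that disagrees with the one in the statement.
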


We proceed to prove \cref{thm:round_the_triangles} by exactly the same argument as given in \autocite{chesebro20}. The key point is the following lemma, which is the analogue of the discussion
directly preceeding the statement of Theorem 6.1 in that paper.
\begin{lem}\label{lem:round_the_triangles_prelim}
  With the setup of \cref{thm:round_the_triangles}, we have
  \begin{gather*}
    M_\alpha^{-1} \begin{bmatrix} \mc{F}(\beta_0) \\ \mc{F}(\beta_1) \end{bmatrix} = \begin{bmatrix} \frac{d(\beta_0)}{d(\alpha)} \mc{F}(\beta_{-1}) \\ \mc{F}(\beta_0) \end{bmatrix}\\
    M_\alpha^{-2} \begin{bmatrix} \mc{F}(\beta_0) \\ \mc{F}(\beta_1) \end{bmatrix} = \begin{bmatrix} \frac{1}{d(\alpha) d(\beta_{-1})} \mc{F}(\beta_{-2}) \\ \frac{1}{d(\beta_{-1})} \mc{F}(\beta_{-1}) \end{bmatrix}
  \end{gather*}
\end{lem}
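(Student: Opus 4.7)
The plan is a direct linear-algebra computation, invoking only the standard formula for the inverse of a $2\times 2$ matrix together with a single application of the Farey recursion \eqref{eqn:antidenom} for each of the two identities.

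First, write down $M_\alpha^{-1}$ explicitly: since $\det M_\alpha = d(\alpha)$ and $d(\hat{\Q})$ contains no zero divisors, we may work over the fraction field of $R$ and obtain
\[
  M_\alpha^{-1} = \frac{1}{d(\alpha)} \begin{bmatrix} \mc{F}(\alpha) & -1 \\ d(\alpha) & 0 \end{bmatrix}.
\]
Applying this to $[\mc{F}(\beta_0),\mc{F}(\beta_1)]^{T}$, the second coordinate collapses immediately to $\mc{F}(\beta_0)$, matching the claim, so the work is entirely in identifying the first coordinate. Since $\beta_0 = \gamma_R$ and $\beta_1 = \gamma_R \oplus \alpha$, and the pair $(\alpha,\gamma_R)$ are Farey neighbours with $\gamma_R \ominus \alpha = \gamma_L = \beta_{-1}$ (an immediate check from the formal definition of $\ominus$), the recursion \eqref{eqn:antidenom} expresses
\[
  \mc{F}(\beta_1) = -d(\alpha)\mc{F}(\beta_{-1}) - \mc{F}(\alpha)\mc{F}(\beta_0).
\]
Substituting this into the numerator $\mc{F}(\alpha)\mc{F}(\beta_0) - \mc{F}(\beta_1)$ and using the multiplicativity $d(\alpha) = d(\beta_{-1})d(\beta_0)$ produces the claimed first identity.

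For the second identity, compose $M_\alpha^{-1}$ with itself and apply to the same starting vector; equivalently, apply $M_\alpha^{-1}$ to the output of the first identity. The second coordinate is automatic and equals $\frac{d(\beta_0)}{d(\alpha)}\mc{F}(\beta_{-1}) = \frac{1}{d(\beta_{-1})}\mc{F}(\beta_{-1})$ by multiplicativity. The first coordinate, after analogous simplification, requires the recursion \eqref{eqn:antidenom} a second time---now applied to the neighbour pair $(\alpha,\gamma_L)$, which satisfies $\gamma_L \ominus \alpha = \gamma_R = \beta_0$---to give
\[
  \mc{F}(\beta_{-2}) = \mc{F}(\gamma_L \oplus \alpha) = -d(\alpha)\mc{F}(\beta_0) - \mc{F}(\alpha)\mc{F}(\beta_{-1}),
\]
which then rearranges into the form required by the lemma.

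The computation is mechanical. The only conceptual point worth highlighting is that the two iterations of $M_\alpha^{-1}$ correspond to traversing the two edges of the Farey triangle at $\alpha$ leaving the vertex $\alpha$, so the recursion is applied once to each of the two distinct neighbour pairs $(\alpha,\gamma_R)$ and $(\alpha,\gamma_L)$. The main technical obstacle is careful bookkeeping of the multiplicative $d$-factors; with this in hand, the identities fall out exactly as stated.
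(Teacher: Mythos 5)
Your overall strategy---write $M_\alpha^{-1}$ explicitly, apply it to the vector, and simplify using one instance of the recursion together with multiplicativity of $d$---is the same as the paper's, but the instance of \cref{eqn:antidenom} you invoke is the wrong one, and with it the claimed right-hand sides do not come out. You expand $\mc{F}(\beta_1)$ with $d(\alpha)$ as the coefficient, i.e.\ $\mc{F}(\beta_1) = -d(\alpha)\mc{F}(\beta_{-1}) - \mc{F}(\alpha)\mc{F}(\beta_0)$. Substituting this into the first coordinate $\left(\mc{F}(\alpha)\mc{F}(\beta_0)-\mc{F}(\beta_1)\right)/d(\alpha)$ gives $\left(2\mc{F}(\alpha)\mc{F}(\beta_0)+d(\alpha)\mc{F}(\beta_{-1})\right)/d(\alpha)$, which is not $\tfrac{d(\beta_0)}{d(\alpha)}\mc{F}(\beta_{-1})$; and even after reconciling the sign of the lower-right entry of $M_\alpha$ with the anti-determinant recurrence (the cancellation the paper's proof alludes to), your relation produces $\mc{F}(\beta_{-1})$ with coefficient $1$, and no bookkeeping with $d(\alpha)=d(\beta_{-1})d(\beta_0)$ can manufacture the missing factor $1/d(\beta_{-1})$. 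The same defect recurs in your second step, where you expand $\mc{F}(\beta_{-2})$ with coefficient $d(\alpha)$ instead of $d(\beta_{-1})$, so the factor $1/(d(\alpha)d(\beta_{-1}))$ in the statement cannot appear.

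The point being missed is that the recursion cannot be applied with either endpoint of an edge serving as the $d$-argument: in the quadrilateral with diagonal $\{\alpha,\beta_0\}$ the vertex $\alpha=\beta_{-1}\oplus\beta_0$ has the largest denominator, so $\beta_1=\alpha\oplus\beta_0$ arises from the triangle $\{\beta_{-1},\beta_0,\alpha\}$ by stepping across $\beta_0$, not across $\alpha$; this is how the definition is used in \autocite{chesebro20} and in the paper's own proof. The relations one must feed into the computation are the analogues of Equations (8) and (9) of that paper, namely $\mc{F}(\beta_1) = -d(\beta_0)\mc{F}(\beta_{-1}) - \mc{F}(\alpha)\mc{F}(\beta_0)$ and $\mc{F}(\beta_{-2}) = -d(\beta_{-1})\mc{F}(\beta_0) - \mc{F}(\alpha)\mc{F}(\beta_{-1})$, combined with $d(\alpha)=d(\beta_{-1})d(\beta_0)$; it is exactly these pivoted relations that generate the correction factors $d(\beta_0)/d(\alpha)=1/d(\beta_{-1})$ and $1/(d(\alpha)d(\beta_{-1}))$. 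Indeed the whole content of \cref{lem:round_the_triangles_prelim} is that above $\alpha$ the $\oplus\,\alpha$-recursion ceases to apply and must be replaced by the relations pivoted at $\beta_0$ and $\beta_{-1}$; your write-up erases precisely that distinction, which is why your ``mechanical'' simplification does not close.
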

\begin{proof}
  The formula involving $ M_\alpha^{-1} $ comes directly from computing the product on the left via the definition and simplifying with the formula
  \begin{displaymath}
    \mc{F}(\beta_1) = -d(\beta_0)\mc{F}(\beta_{-1}) - \mc{F}(\alpha)\mc{F}(\beta_0)
  \end{displaymath}
  which is almost exactly the same as Equation (8) of \autocite{chesebro20}---the single sign change cancels exactly with the sign change
  between the `determinant' and `anti-determinant' recurrences so we get the same overall formula for the $ M_\alpha^{-1} $ product as they do in Equation (11) of their paper.

  The formula for $ M_\alpha^{-2} $ comes from applying the analogues of Equations (9) and (10) of their paper,
  \begin{gather*}
    \mc{F}(\beta_{-2}) = -d(\beta_{-1}) \mc{F}(\beta_0) - \mc{F}(\alpha)\mc{F}(\beta_{-1})\\
    d(\alpha) = d(\beta_{-1}) d(\beta_0)
  \end{gather*}
  and simplifying; again the minus signs cancel and we get the same formula.
\end{proof}

\begin{proof}[Proof of \cref{thm:round_the_triangles}]
  The formula for $ n \geq 0 $ holds for all Farey recursive formulae as noted above; the formulae for $ n = -1 $ and $ n = -2 $ are just
  the formulae of \cref{lem:round_the_triangles_prelim}; and we proceed to prove the formula for $ n < -2 $ by induction. Assume that the formula
  holds for some fixed $ n \leq -2 $; then from the definitions we have
  \begin{displaymath}
    \mc{F}(\beta_{n-1}) = -\mc{F}(\alpha)\mc{F}(\beta_n) - d(\alpha)\mc{F}(\beta_{n+1})
  \end{displaymath}
  and so we can compute
  \begin{align*}
    M_\alpha^{n-1} \begin{bmatrix} F(\beta_0)\\F(\beta_1) \end{bmatrix}
      &= M_{\alpha}^{-1} M_\alpha^n \begin{bmatrix} F(\beta_0)\\F(\beta_1) \end{bmatrix}\\
      &= \frac{1}{d(\alpha)} \begin{bmatrix} -\mc{F}(\alpha) & -1 \\ d(\alpha) & -0 \end{bmatrix}
          \begin{bmatrix} \frac{1}{d(\beta_{-1}) d(\alpha)^{-n-1}}F(\beta_n)\\ \frac{1}{d(\beta_{-1}) d(\alpha)^{-n-2}}F(\beta_{n+1}) \end{bmatrix}\\
      &= \frac{1}{d(\alpha)} \begin{bmatrix} -\frac{1}{d(\beta_{-1}) d(\alpha)^{-n-1}} \left( \mc{F}(\alpha)\mc{F}(\beta_n) + d(\alpha) \mc{F}(\beta_{n+1}) \right)\\
                                             \frac{1}{d(\beta_{-1}) d(\alpha)^{-n-2}} \mc{F}(\beta_{n}) \end{bmatrix}\\
      &= \begin{bmatrix} -\frac{1}{d(\beta_{-1}) d(\alpha)^{-n}} \mc{F}(\beta_{n-1})\\
                                             \frac{1}{d(\beta_{-1}) d(\alpha)^{-n-1}} \mc{F}(\beta_{n}) \end{bmatrix}
  \end{align*}
  which is the desired result.
\end{proof}

\begin{cor}[Adaptation of Corollary 6.2 of \autocite{chesebro20}]\label{cor:determinant1}
  Let $ \Phi^h $ be a family of homogeneous Farey polynomials (i.e. a family solving \cref{eqn:identity_frf_homogeneous} for some starting values). Then,
  for some $ \alpha \in \Z $, if $ M_\alpha $ is the matrix
  \begin{displaymath}
    \begin{bmatrix}0&1\\-1&\Phi^h(\alpha)\end{bmatrix}
  \end{displaymath}
  and if $ (\beta_k)_{k\in\Z} $ are the boundary values about $ \alpha $ as in \cref{eqn:betas}, then for all $ n \in \Z $ we have
  \begin{displaymath}
    M^n_\alpha \begin{bmatrix} \Phi^h(\beta_0) \\ \Phi^h(\beta_1) \end{bmatrix} = \begin{bmatrix} \Phi^h(\beta_n) \\ \Phi^h(\beta_{n+1}) \end{bmatrix}.
  \end{displaymath}
\end{cor}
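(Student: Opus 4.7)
The plan is to derive this as an immediate specialisation of \cref{thm:round_the_triangles}. The homogeneous Farey polynomial recursion \cref{eqn:identity_frf_homogeneous} is precisely the anti-determinant recursion of \cref{eqn:antidenom} with $d \equiv 1$, since the coefficient multiplying $\Phi^h(\beta\ominus\alpha)$ in \cref{eqn:identity_frf_homogeneous} is the constant $1$. Thus $\Phi^h$ is a Farey recursive function of anti-determinant equal to the constant function $1 \in \Z[z]$.

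First I would verify the hypotheses of \cref{thm:round_the_triangles} in this setting: the constant function $1$ is trivially multiplicative (since $1 \cdot 1 = 1$), and its image $\{1\} \subset \Z[z]$ contains no zero divisors. With $d \equiv 1$, the matrix $M_\alpha$ of \cref{eqn:bdry_matrix} reduces to exactly the matrix displayed in the statement of the corollary.

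Then I would substitute $d \equiv 1$ into the three cases of \cref{thm:round_the_triangles}. The $n \geq 0$ case is unchanged and already gives the desired formula. For $n = -1$, the prefactor $1/d(\beta_{-1}) = 1$, so the right-hand side collapses to $\begin{bmatrix}\Phi^h(\beta_{-1})\\ \Phi^h(\beta_0)\end{bmatrix}$, matching the general formula. For $n < -1$, both prefactors $1/(d(\beta_{-1}) d(\alpha)^{-n-1})$ and $1/(d(\beta_{-1}) d(\alpha)^{-n-2})$ evaluate to $1$, again collapsing to the desired clean form.

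There is no real obstacle here; the only thing to be careful about is recognising that our sign conventions in \cref{eqn:identity_frf_homogeneous} match the \emph{anti}-determinant form of \cref{eqn:antidenom} (rather than the determinant form of \autocite{chesebro20}), which is precisely why \cref{thm:round_the_triangles} was stated in that adapted generality in the first place. Once the identification is made, the corollary is a two-line specialisation.
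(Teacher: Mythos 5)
Your proposal is correct and follows exactly the paper's own route: the paper likewise observes that $\Phi^h$ is a Farey recursive function with anti-determinant the constant function $1$ and then cites \cref{thm:round_the_triangles} directly. Your extra checks (multiplicativity of the constant $1$, no zero divisors, the collapse of the prefactors in the $n\leq -1$ cases) merely spell out what the paper leaves implicit.
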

\begin{proof}
  This follows directly from \cref{thm:round_the_triangles} with the observation that the anti-determinant of $ \Phi^h $ is the constant function $ d(\gamma) = 1 $ for all $ \gamma \in \Q $.
\end{proof}

Thus to determine $ \Phi^h_{\alpha} $ for all $ \alpha \in \Q $ it suffices to compute and diagonalise the $ M_\alpha $ matrices, using the techniques
of \cref{sec:diagonalise}. (Of course, we need to diagonalise in the ring of rational functions over $ \Q $ rather than the ring of polynomials over $ \Z $.)
More precisely, we need to compute $ M_{\alpha_i} $ for some family $ (\alpha_i) $ of rationals with the property that the boundary sets $ \partial(\alpha_i) $
cover $ \Q $. (In \cref{sec:diagonalise}, we did this computation for $ \partial(0/1) $.)

In any case, from \cref{cor:determinant1} we immediately have a qualitative result:
\begin{thm}\label{thm:qualitative_chebyshev}
  For any $ \gamma \in \Q $, there exists a sequence $ \ldots,\gamma_{-1},\gamma_0 = \gamma, \gamma_1, \gamma_2,\ldots $ of rational numbers such that $ \Phi^h_{\gamma_n}(z) $
  is a sequence of Chebyshev polynomials $ W_n(\Phi^h_\gamma(z)/2) $.  (Namely, let $ \gamma_{-1} $ be a neighbour in the Stern-Brocot tree of $ \gamma $ and take the sequence $ (\gamma_k) $
  to be precisely the sequence $ (\beta_k) $ of \cref{eqn:betas} with $ \alpha \coloneq \gamma \ominus \gamma_{-1} $.) \qed
\end{thm}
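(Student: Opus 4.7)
The plan is to feed the setup of \cref{cor:determinant1} directly into the Chebyshev three-term recurrence. First I will fix $\gamma \in \Q$, choose any Farey neighbour $\gamma_{-1}$ of $\gamma$ in the Stern-Brocot tree, and set $\alpha := \gamma \ominus \gamma_{-1}$. A direct check from the definition of $\ominus$ on Farey neighbours shows that $\alpha$ is itself a Farey neighbour of both $\gamma_{-1}$ and $\gamma$, and that $\alpha \oplus \gamma_{-1} = \gamma$. In particular, $\gamma_{-1}$ and $\gamma$ are the two Farey parents summing to $\alpha$ in the boundary construction, so after labelling $\gamma_L := \gamma_{-1}$ and $\gamma_R := \gamma$ I define $\gamma_k := \beta_k$ for all $k \in \Z$, where $(\beta_k)$ is the boundary sequence of \cref{eqn:betas} about $\alpha$; this ensures $\gamma_0 = \gamma$ and $\gamma_{-1}$ is our chosen neighbour.

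The second step will be to apply \cref{cor:determinant1} to $\Phi^h$ (whose anti-determinant is the constant function $1$), yielding, for every $n \in \Z$,
\begin{displaymath}
M_\alpha^n \begin{bmatrix} \Phi^h(\gamma_0) \\ \Phi^h(\gamma_1) \end{bmatrix} = \begin{bmatrix} \Phi^h(\gamma_n) \\ \Phi^h(\gamma_{n+1}) \end{bmatrix}, \qquad M_\alpha = \begin{bmatrix} 0 & 1 \\ -1 & \Phi^h(\alpha) \end{bmatrix}.
\end{displaymath}
Reading off the lower row converts this into the scalar three-term recurrence
\begin{displaymath}
\Phi^h(\gamma_{n+1}) = \Phi^h(\alpha)\,\Phi^h(\gamma_n) - \Phi^h(\gamma_{n-1}).
\end{displaymath}

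Finally, setting $x := \Phi^h(\alpha)(z)/2$ for fixed $z \in \C$ turns this into precisely the Chebyshev recurrence $W_{n+1}(x) = 2xW_n(x) - W_{n-1}(x)$; hence the sequence $\bigl(\Phi^h(\gamma_n)(z)\bigr)_n$ is the $n$-th entry in the Chebyshev-type family with initial data $W_0(x) = \Phi^h(\gamma)(z)$ and $W_1(x) = \Phi^h(\gamma_1)(z)$, which is the conclusion sought (the recurrence coefficient being naturally $\Phi^h(\alpha)/2$). The only subtle point is the bookkeeping in the first paragraph, namely verifying that $\gamma$ and $\gamma_{-1}$ occupy the slots $\beta_0$ and $\beta_{-1}$ rather than being shifted, and checking the Farey-neighbour condition needed to form $\ominus$; once this is in place the remainder is a direct appeal to \cref{cor:determinant1} together with the recursive characterisation of Chebyshev polynomials, so I do not expect any serious obstacle.
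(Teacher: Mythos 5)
Your overall route is the same as the paper's: \cref{thm:qualitative_chebyshev} is deduced there directly from \cref{cor:determinant1}, exactly as you do, by reading off the constant-coefficient three-term recurrence encoded in $M_\alpha$ and matching it with the Chebyshev recurrence (and your closing remark that the natural Chebyshev argument is $\Phi^h_\alpha/2$ is consistent with the quantitative theorem that follows in the paper, which uses $\Phi^h_\alpha$ rather than the $\Phi^h_\gamma$ appearing literally in the statement). However, the one step you yourself single out as the ``only subtle point'' is handled incorrectly. From $\alpha = \gamma \ominus \gamma_{-1}$ you get $\gamma = \gamma_{-1} \oplus \alpha$, so $\gamma$ is the \emph{mediant} of $\gamma_{-1}$ and $\alpha$; it is not true that $\gamma_{-1}$ and $\gamma$ are the two Farey parents of $\alpha$, i.e.\ $\gamma_{-1} \oplus \gamma \neq \alpha$ in general (writing $\gamma = p/q$, $\gamma_{-1} = r/s$, the mediant has denominator $q+s$ while $\alpha$ has denominator $q-s$). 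Hence the labelling $\gamma_L := \gamma_{-1}$, $\gamma_R := \gamma$ is inadmissible in \cref{eqn:betas}, and with your indexing $\beta_0 = \gamma_R$ would not equal $\gamma$. Concretely, for $\gamma = 1/2$, $\gamma_{-1} = 1/1$ one has $\alpha = 0/1$, whose boundary sequence along the right branch runs $1/0, 1/1, 1/2, 1/3, \ldots$, so $\gamma$ sits at position $2$, not $0$.

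The gap is easily repaired, and the repair is what the paper implicitly relies on. Since $\gamma_{-1}$ is a Farey neighbour of $\alpha$ (a computation you essentially performed), it lies in $\partial(\alpha)$, say $\gamma_{-1} = \beta_j$, and $\gamma = \gamma_{-1} \oplus \alpha$ is the adjacent term further along that branch; so $\gamma$ and $\gamma_{-1}$ occupy consecutive slots of the boundary sequence, just not slots $0$ and $-1$. Re-index (and, on the left branch, reverse the orientation) so that $\gamma$ sits at index $0$. Because the transition matrix $M_\alpha$, and hence the recurrence $\Phi^h(\gamma_{n+1}) = \Phi^h(\alpha)\Phi^h(\gamma_n) - \Phi^h(\gamma_{n-1})$ furnished by \cref{cor:determinant1}, is the same at every step of $\partial(\alpha)$ and is reversible, the shifted sequence satisfies the identical recurrence, and your identification with the Chebyshev-type family determined by $2x = \Phi^h(\alpha)$ and the initial data $\Phi^h(\gamma_0), \Phi^h(\gamma_1)$ then goes through unchanged. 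With that correction your argument coincides with the paper's, which states the theorem as an immediate consequence of \cref{cor:determinant1} and describes the sequence $(\gamma_k)$ as a geodesic in the Stern--Brocot tree only in the remark that follows.
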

\begin{rem}
  Of course, the boundary sequence $ (\gamma_k) $ constructed here is just a geodesic line $ \Lambda $ in the Stern-Brocot tree rooted at $ \gamma $, defined by choosing
  one vertical half-ray in the tree starting from $ \gamma $ (where `vertical' refers to the embedding of \cref{fig:farey_multipliers}) and then extending that in the Farey graph in the obvious
  way by repeated Farey arithmetic with the same difference. There are clearly two such natural choices for $ \Lambda $ given a fixed $ \gamma $ ($ \gamma $ has three neighbours,
  but two correspond to the same geodesic), and a single natural choice is obtained by taking the unique neighbour of $ \gamma $ which lies above.
\end{rem}

We easily compute that the eigenvalues of $ M_\alpha $ are
\begin{displaymath}
  \lambda^{\pm} = \frac{1}{2}\left(\Phi^h_\alpha \pm \sqrt{(\Phi^h_\alpha)^2 - 4} \right)
\end{displaymath}
Let $ x = \Phi^h_\alpha $ and $ \kappa = \sqrt{x^2 - 4} $ (this is the analogue of the constant $ \alpha $ from \cref{sec:diagonalise}); then the respective eigenvectors are
\begin{displaymath}
  v^{\pm} = \begin{bmatrix} x \mp \kappa \\ 2 \end{bmatrix}.
\end{displaymath}
We therefore may diagonalise $ M_\alpha $ as
\begin{displaymath}
  M_\alpha = -\frac{1}{4\kappa} \begin{bmatrix} x-\kappa & x+\kappa \\ 2 & 2 \end{bmatrix}
                                \begin{bmatrix} \frac{1}{2}(x+\kappa) & 0 \\ 0 & \frac{1}{2}(x-\kappa) \end{bmatrix}
                                \begin{bmatrix} 2 & -x-\kappa \\ -2 & x-\kappa \end{bmatrix};
\end{displaymath}
in particular, $ \Phi^h(\beta_n) $ is the first component of
\begin{align*}
  &M_\alpha^n \begin{bmatrix} \Phi^h(\beta_0) \\ \Phi^h(\beta_1) \end{bmatrix}\\
    &\quad{}= -\frac{1}{4\kappa} \begin{bmatrix} x-\kappa & x+\kappa \\ 2 & 2 \end{bmatrix}
                         \begin{bmatrix} \frac{1}{2^n}(x+\kappa)^n & 0 \\ 0 & \frac{1}{2^n}(x-\kappa)^n \end{bmatrix}
                         \begin{bmatrix} 2 & -x-\kappa \\ -2 & x-\kappa \end{bmatrix}
                         \begin{bmatrix} \Phi^h(\beta_0) \\ \Phi^h(\beta_1) \end{bmatrix}
\end{align*}
computing this, we have
\begin{displaymath}
  \Phi^h(\beta_n) = \frac{\splitfrac{(\Phi^h(\beta_0) (x+\kappa) - 2\Phi^h(\beta_1)) (x - \kappa)^n}{ {}+ (\Phi^h(\beta_0) (\kappa-x) +2\Phi^h(\beta_1)) (x + \kappa)^n}}{2^{1+n}\kappa}.
\end{displaymath}

In particular, we have proved the following quantitative improvement of \cref{thm:qualitative_chebyshev}:
\begin{thm}
  Let $ \beta_0 $ and $ \beta_1 $ be Farey neighbours, and let $ \alpha = \beta_1 \ominus \beta_0 $. Then we have a closed form formula for $ \Phi^h(\beta_n) $ ($ n \in \Z $), namely
  \begin{displaymath}
    \Phi^h_{\beta_n} = \frac{\left(\Phi^h_{\beta_0} \left(\Phi^h_{\alpha}+\kappa\right) - 2\Phi^h_{\beta_1}\right) \left(\Phi^h_{\alpha} - \kappa\right)^n + \left(\Phi^h_{\beta_0} \left(\kappa-\Phi^h_{\alpha}\right) +2\Phi^h_{\beta_1}\right) \left(\Phi^h_{\alpha} + \kappa\right)^n}{2^{1+n}\kappa}.
  \end{displaymath}
  where $ \kappa = \sqrt{\left(\Phi^h_\alpha\right)^2 - 4} $. \qed
\end{thm}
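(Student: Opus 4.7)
The plan is to read the closed-form formula directly off of \cref{cor:determinant1} after explicitly diagonalising the transition matrix $M_\alpha$. By that corollary, for the choice $\alpha = \beta_1 \ominus \beta_0$ the Farey recursion down the boundary sequence is conjugate to iterated multiplication by
\begin{displaymath}
  M_\alpha = \begin{bmatrix} 0 & 1 \\ -1 & \Phi^h_\alpha \end{bmatrix},
\end{displaymath}
and the desired quantity $\Phi^h(\beta_n)$ is the first coordinate of $M_\alpha^n [\Phi^h(\beta_0),\Phi^h(\beta_1)]^{T}$ for every $n \in \Z$.

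First I would compute the characteristic polynomial of $M_\alpha$, which is $\lambda^2 - \Phi^h_\alpha \lambda + 1$, giving eigenvalues $\lambda^{\pm} = \tfrac{1}{2}(\Phi^h_\alpha \pm \kappa)$ with $\kappa = \sqrt{(\Phi^h_\alpha)^2 - 4}$, and corresponding eigenvectors $v^{\pm} = [\Phi^h_\alpha \mp \kappa, 2]^{T}$ (mirroring the computation in \cref{sec:diagonalise}). I would then write out the diagonalisation $M_\alpha = P D P^{-1}$ in the ring of rational functions (so that dividing by $\kappa$ is legitimate), and raise it to the $n$th power by raising $D$ to the $n$th power.

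The key computational step is to multiply out
\begin{displaymath}
  M_\alpha^n \begin{bmatrix} \Phi^h(\beta_0) \\ \Phi^h(\beta_1) \end{bmatrix}
    = -\tfrac{1}{4\kappa} P \begin{bmatrix} (\lambda^+)^n & 0 \\ 0 & (\lambda^-)^n \end{bmatrix} P^{-1} \begin{bmatrix} \Phi^h(\beta_0) \\ \Phi^h(\beta_1) \end{bmatrix}
\end{displaymath}
and read off the top entry; grouping terms by $(\Phi^h_\alpha + \kappa)^n$ and $(\Phi^h_\alpha - \kappa)^n$ yields exactly the stated formula. Since this argument proceeds purely by linear algebra after invoking \cref{cor:determinant1}, I do not expect any genuine obstacle; the only mild subtlety is that the formula as written involves division by $\kappa$, so it must be interpreted in the field of fractions of $\Z[z][\Phi^h_\alpha]$ (or equivalently one can clear denominators at the end and verify by induction in the polynomial ring, checking base cases $n=0,1$ to pin down the constants). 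The base cases $n=0$ and $n=1$ serve both as sanity checks and as the confirmation that no factor of $\kappa$ survives in the final answer, since $\kappa$ disappears from every polynomial $\Phi^h_{\beta_n}$ after combining the two conjugate terms.
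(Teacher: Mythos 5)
Your proposal is correct and follows essentially the same route as the paper: invoke \cref{cor:determinant1}, diagonalise $M_\alpha$ with eigenvalues $\tfrac{1}{2}(\Phi^h_\alpha\pm\kappa)$ and eigenvectors $[\Phi^h_\alpha\mp\kappa,2]^{T}$, apply $M_\alpha^n$ to the initial vector and read off the first coordinate, working in the ring of rational functions to allow division by $\kappa$. Nothing is missing; the base-case check at $n=0,1$ is a reasonable extra sanity check but not needed beyond the paper's computation.
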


This gives a `local' closed form solution for the recursion around any $ \alpha \in \Q $; a `global' solution corresponds to a collection of these solutions, each local to a particular
geodesic in the graph and which are compatible on intersections. Unfortunately, our original recurrence relied on knowing only three initial values globally in the graph; while this local formula relies
on knowing three initial values which are local on the particular geodesic.

\section{Final remarks on the Farey polynomials}\label{sec:contin_frac}
As we mentioned in the introduction to \autocite{ems21}, a version of this theory can be used to give approximations to irrational pleating rays. In order to do this,
we must deal with the theory of infinite continued fractions.

Every rational number can be expressed as a finite simple continued fraction: if $ (a_0,a_1,\ldots,a_k) $ is a finite sequence of integers, we define the simple continued fraction
\begin{displaymath}
  [a_0;a_1,\ldots,a_k] \coloneq a_0 + 1/\left(a_1 + 1/\left( 1/\left(a_2 + 1/\left( \cdots + 1/a_k\right)\right)\right)\right)
\end{displaymath}
and every every rational number $ p/q $ has exactly two expressions as such a continued fraction \autocite[Theorem 162]{hardywright}.
It is also a well-known result in classical number theory that every \emph{irrational} $ \lambda \in \R\setminus\Q $ has a unique simple continued fraction approximation of the form
\begin{displaymath}
  \lambda = [a_0;a_1,\ldots,a_n,\ldots]
\end{displaymath}
which can be computed efficiently by repeated application of the Euclidean algorithm (see, for example, \S 10.9 of \autocite{hardywright}). The following result,
which is obtained by combining Theorem~149 and Theorem~150 of \autocite{hardywright}, exhibits $ \lambda $ as a limit of a sequence of rationals `down the Farey tree'.

\begin{prp}\label{prp:farey_expansion}
  Suppose that $ p/q = [a_1,\ldots,a_{N-1},a_N,1] $; define
  \begin{displaymath}
    \frac{r_1}{s_1} = [a_1,\ldots,a_{N-1},a_N] \text{ and } \frac{r_2}{s_2} = [a_1,\ldots,a_{N-1}].
  \end{displaymath}
  Then $ r_1/s_1 $ and $ r_2/s_2 $ are Farey neighbours and $ p/q = (r_1/s_1) \oplus (r_2/s_2) $. \qed
\end{prp}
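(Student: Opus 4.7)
The plan is to recognise $r_1/s_1$ and $r_2/s_2$ as consecutive convergents of the continued fraction expansion of $p/q$, and then invoke the standard convergent recurrence together with the determinant identity for convergents.

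More precisely, I would begin by recalling that for any simple continued fraction $[a_1,a_2,\ldots]$ the convergents $h_n/k_n$ satisfy the well-known recurrences
\begin{align*}
  h_n &= a_n h_{n-1} + h_{n-2}, &  k_n &= a_n k_{n-1} + k_{n-2},
\end{align*}
with the standard initial values $h_{-1}=1,\ h_0=0,\ k_{-1}=0,\ k_0=1$ (see for instance \autocite[\S 10.2]{hardywright}). Applied to $p/q = [a_1,\ldots,a_N,1]$, this identifies $r_2/s_2 = h_{N-1}/k_{N-1}$ and $r_1/s_1 = h_N/k_N$, while $p/q$ itself is the $(N+1)$-th convergent.

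Next, I would apply the recurrence once more at step $N+1$ with $a_{N+1}=1$:
\begin{align*}
  p &= 1\cdot r_1 + r_2 = r_1+r_2, &  q &= 1\cdot s_1 + s_2 = s_1+s_2.
\end{align*}
This is precisely the statement $p/q = (r_1/s_1)\oplus(r_2/s_2)$, provided we know $r_1/s_1$ and $r_2/s_2$ are Farey neighbours. For that, I would invoke the classical determinant identity for consecutive convergents,
\begin{displaymath}
  h_n k_{n-1} - h_{n-1} k_n = (-1)^n,
\end{displaymath}
which gives $r_1 s_2 - r_2 s_1 = \pm 1$; this is exactly the Farey-neighbour condition as defined just after \cref{lem:farey_closeness}.

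The argument is essentially a bookkeeping exercise with the convergent recurrence; there is no real obstacle beyond matching conventions. The only subtlety is to make sure the indexing and the final ``$+1$'' in the continued fraction are handled correctly, and that the sign in the determinant identity is benign (which it is, since the paper admits either sign in its definition of Farey neighbours).
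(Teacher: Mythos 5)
Your proposal is correct and follows essentially the same route as the paper's own proof: identify $r_2/s_2$ and $r_1/s_1$ as the two convergents preceding $p/q$, apply the convergent recurrence (Hardy--Wright, Theorem~149) with final partial quotient $1$ to get $p = r_1 + r_2$, $q = s_1 + s_2$, and use the determinant identity for consecutive convergents (Theorem~150) for the Farey-neighbour condition. The only differences are notational (your $h_n/k_n$ indexing versus the paper's $p_n/q_n$), and your remark about the sign being benign matches the paper's convention that Farey neighbours require only $ps - qr = \pm 1$.
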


In the previous section, we indicated how to compute in closed form the sequence of Farey polynomials corresponding to the Farey fractions
\begin{displaymath}
  \frac{p_1}{q_1}, \frac{p_2}{q_2} = \frac{p_1}{q_1} \oplus \left(\frac{p_2}{q_2} \ominus \frac{p_1}{q_1}\right), \ldots, \frac{p_n}{q_n} = \frac{p_1}{q_1} \oplus^{n-1} \left(\frac{p_2}{q_2} \ominus \frac{p_1}{q_1}\right)
\end{displaymath}
where $ p_1/q_1 $ and $ p_2/q_2 $ are Farey neighbours. That is, we gave a way to compute the Farey polynomials down a branch of the Farey tree with constant difference
(for instance, we gave the example of $ \Phi_{1/q} $, where $\frac{1}{q} = \frac{1}{0} \oplus^q \frac{0}{1} $). The study of partial fraction decompositions here gives, in general,
different sequences: the constant addition sequence rooted at an element $ \xi $ in the tree is the sequence which constantly chooses the \emph{left} branch when moving down
from $ \xi $ (with respect to the embedding of \cref{fig:farey_multipliers}), while the sequence corresponding to continually adding the previous two items in the tree (and therefore
building a continued fraction decomposition) corresponds to the sequence which is eventually constantly moving \emph{rightwards}. Unfortunately we were not able to write down closed-form
expressions for sequences of Farey polynomials corresponding to finite convergents of infinite continued fraction decompositions in any more general way.

\begin{rem}
  The geometric meaning of all this comes from the theory of two-bridge links and is described in \autocite[Chapter 12]{burde}; one is really writing down a sequence of 2-bridge links
  which converge to the link of interest, increasing by one crossing in each step.
\end{rem}

The recursion relation of interest comes, as always, from applying the Farey polynomial operator to the recursion relation $ a_n = a_{n-1} \oplus a_{n-2} $; doing this we obtain
\begin{displaymath}
  \Phi_{a_n} = \Phi_{a_{n-1} \oplus a_{n-2}} = 8 - \Phi_{a_{n-1} \ominus a_{n-2}} - \Phi_{a_{n-1}} \Phi_{a_{n-2}}.
\end{displaymath}

Observe that $ a_{n-1} \ominus a_{n-2} $ is just $ a_{n-3} $ and replace the cumbersome notation $ \Phi_{a_k} $ with $ x_k $ to get the relation
\begin{equation}\label{eq:recrelation}
  x_n = 8 - x_{n-3} - x_{n-2}x_{n-1}.
\end{equation}
Unfortunately, this is a non-linear recurrence relation and we are unable to find a closed-form solution. (We believe that if one exists then it will be
in terms of combinatorial objects, e.g. Stirling numbers.) However, it does at least give a computationally feasible method for approximating irrational cusp points.

\begin{ex}[The Fibonacci polynomials]
  Let us compute an approximation to the pleating ray with asymptotic angle $ \pi/\phi $, where $ \phi $ is the golden
  ratio $ (\sqrt{5}+1)/2 $. It is well-known that
  \begin{displaymath}
    \phi = [1,1,\ldots] = 1 + \frac{1}{1 + \frac{1}{1 + \frac{1}{1 + \ddots}}}
  \end{displaymath}
  and so by \cref{prp:farey_expansion} we get that $ 1/\phi $ is approximated by
  \begin{displaymath}
    [1] = 1,\quad [1,1] = 1/2,\quad [1,1,1] = 2/3,\ldots;
  \end{displaymath}
  in other words, by the Fibonacci fractions $ \fib(q-1)/\fib(q) $. We have already computed the
  corresponding polynomials, which are listed in \cref{tab:fareysfib};
  the inverse images of $ -2 $ under the first 16 such polynomials are shown in \cref{fig:fibloci},
  where the $ 1/\phi$-cusp is approximated by the `corner' point, at the top-left of each picture.
  Quite a good approximation seems to be given after only a few pictures. Observe that the shape is somehow stable:
  \textit{a priori} it is just the cusps which must converge to a point, but we see that the entire set of roots seems
  to be converging to the solid outline of some fractal shape.
\end{ex}

\begin{figure}
  \centering
  \begin{subfigure}{.24\textwidth}
    \centering
    \includegraphics[width=\textwidth]{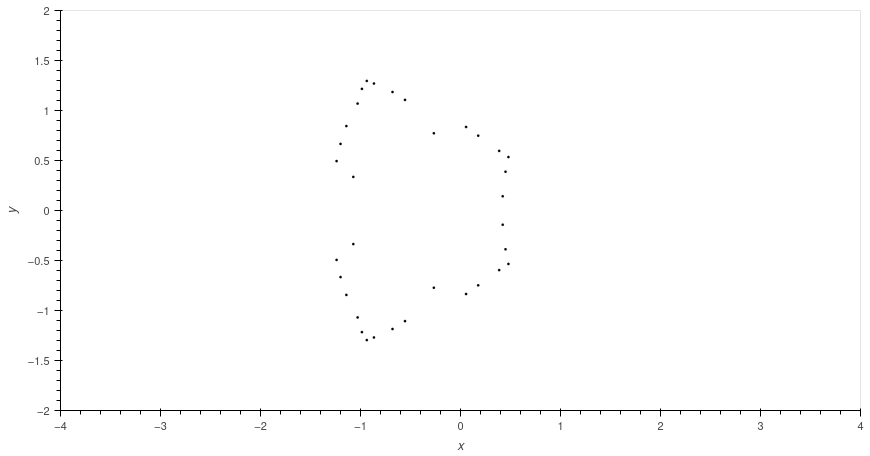}
    \caption{$21/34$}
  \end{subfigure}
  \begin{subfigure}{.24\textwidth}
    \centering
    \includegraphics[width=\textwidth]{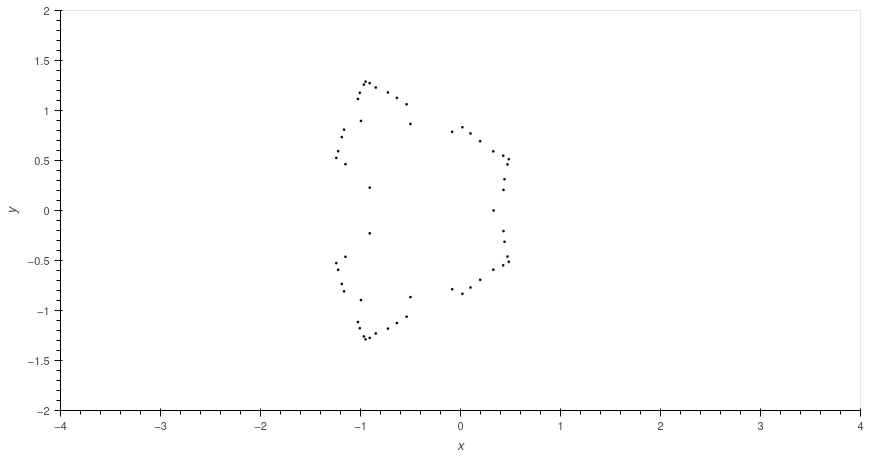}
    \caption{$34/55$}
  \end{subfigure}
  \begin{subfigure}{.24\textwidth}
    \centering
    \includegraphics[width=\textwidth]{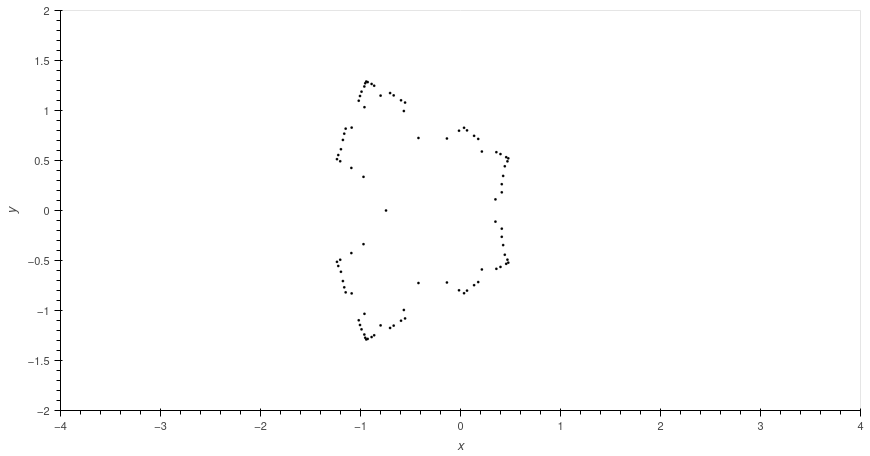}
    \caption{$55/89$}
  \end{subfigure}
  \begin{subfigure}{.24\textwidth}
    \centering
    \includegraphics[width=\textwidth]{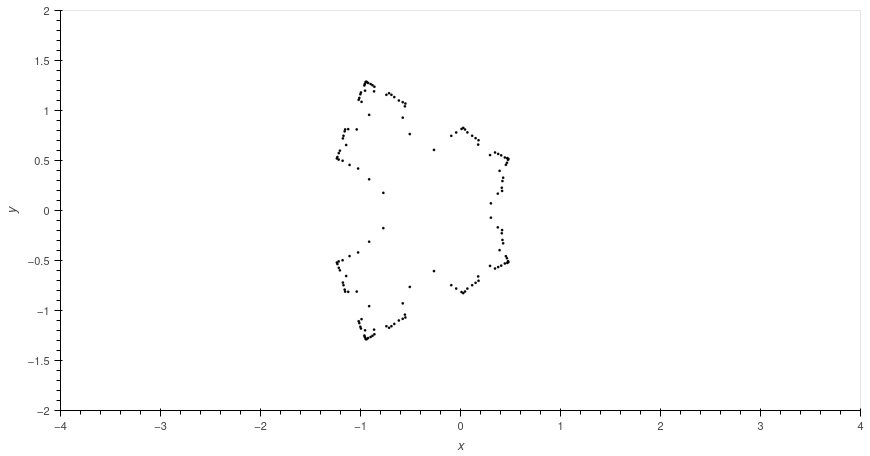}
    \caption{$89/144$}
  \end{subfigure}\\
  \begin{subfigure}{.24\textwidth}
    \centering
    \includegraphics[width=\textwidth]{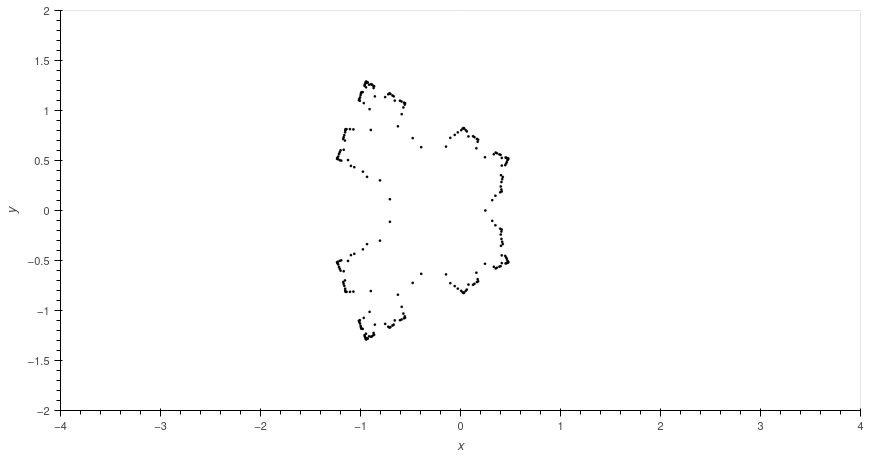}
    \caption{$144/233$}
  \end{subfigure}
  \begin{subfigure}{.24\textwidth}
    \centering
    \includegraphics[width=\textwidth]{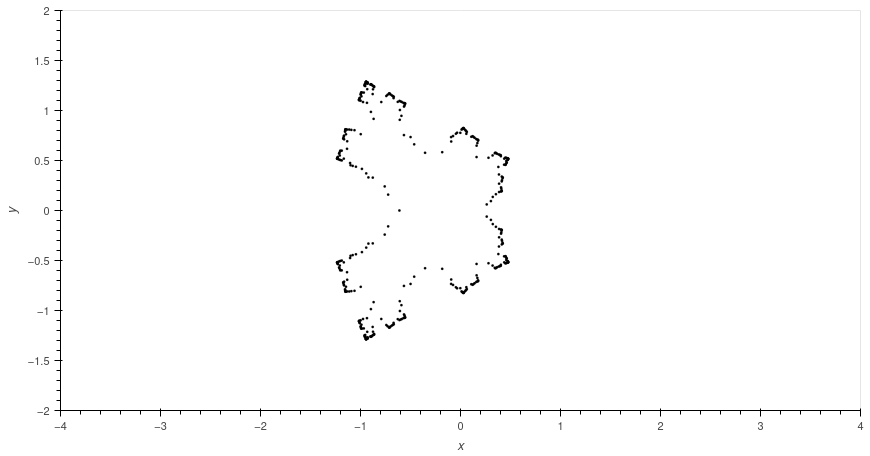}
    \caption{$233/377$}
  \end{subfigure}
  \begin{subfigure}{.24\textwidth}
    \centering
    \includegraphics[width=\textwidth]{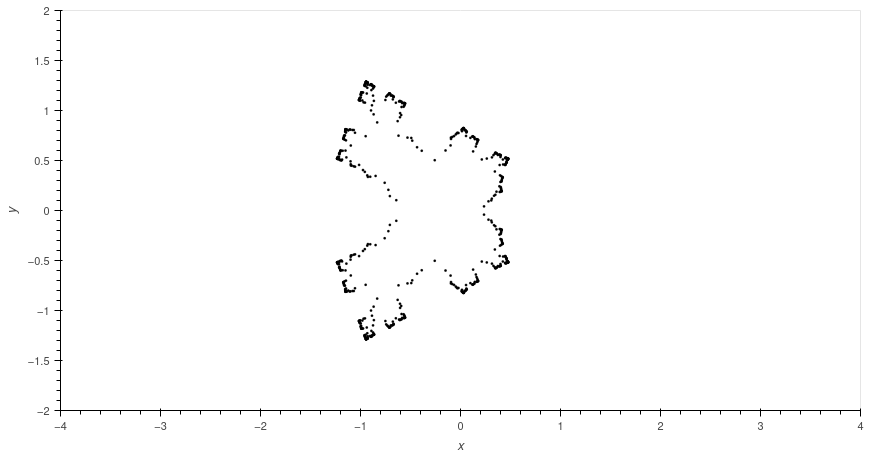}
    \caption{$377/610$}
  \end{subfigure}
  \begin{subfigure}{.24\textwidth}
    \centering
    \includegraphics[width=\textwidth]{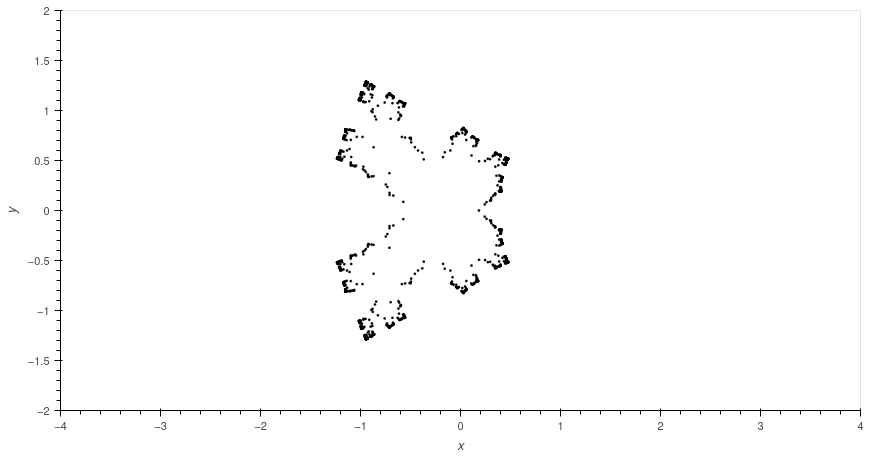}
    \caption{$610/987$}
  \end{subfigure}
  \caption{The zeros of $ \Phi_{\fib(p-1)/\fib(p)} $ for $ p \in \{8,\ldots,16\} $.\label{fig:fibloci}}
\end{figure}

\begin{ex}[$\sqrt{2}$]
  We consider another irrational number with a nice continued fraction decomposition:
  \begin{displaymath}
    \sqrt{2} = [1,2,2,\ldots,2,\ldots]
  \end{displaymath}
  so we may approximate $ 1/\sqrt{2} $ by the Farey sequence beginning
  \begin{displaymath}
    1, 2/3, 5/7, 7/10, 12/17, \ldots .
  \end{displaymath}
  The corresponding Farey polynomials are higher degree and so longer to write down than the Fibonacci polynomials, but it is easy enough
  for the computer to plot the preimages of $-2$ for the first 10 or so polynomials; we give the preimage for $ \Phi_{408/577} $ in \cref{fig:sqrtpic}.
\end{ex}

\begin{figure}
  \centering
  \begin{subfigure}{0.45\textwidth}
    \centering
    \includegraphics[width=\textwidth]{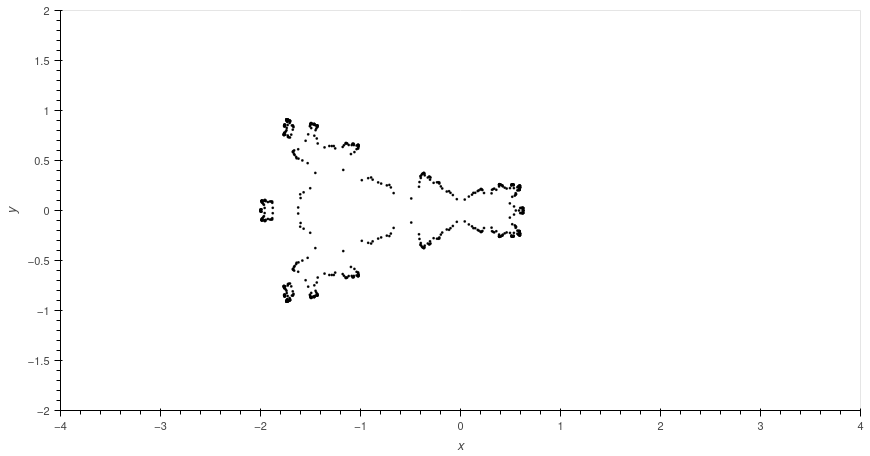}
    \caption{$ \left(\Phi^{\infty,\infty}_{408/577}\right)^{-1}(-2) $, approximating the $ 1/\sqrt{2} $ cusp point.\label{fig:sqrtpic}}
  \end{subfigure}%
  \begin{subfigure}{0.45\textwidth}
    \centering
    \includegraphics[width=\textwidth]{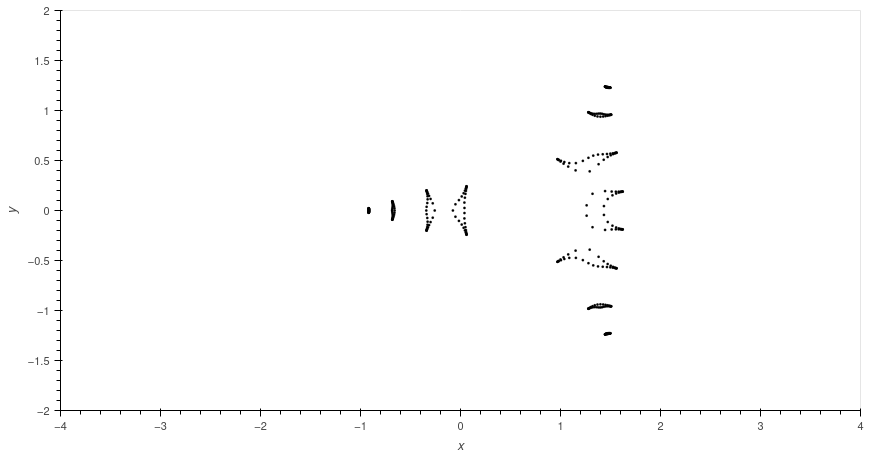}
    \caption{$ \left(\Phi^{\infty,\infty}_{113/355}\right)^{-1}(-2) $, approximating the $ 1/\pi $ cusp point.\label{fig:pipic}}
  \end{subfigure}
  \caption{Two more root sets arising from approximations of irrational cusps.}
\end{figure}

\begin{ex}[$\pi$]
  We give one final example since the root structure seems qualitatively quite different to the previous two. Considering the expansion
  \begin{displaymath}
    \pi = [3,7,15,1,292,1,1,1,2,\ldots]
  \end{displaymath}
  we may approximate $ 1/\pi $ by the Farey sequence beginning
  \begin{displaymath}
    1/3, 7/22, 106/333, 113/355, 33102/103993, 33215/104348, \ldots .
  \end{displaymath}
  We give the preimage for $ \Phi_{113/355} $ in \cref{fig:pipic}. We are unable to explain why the roots of the polynomial here do not seem to `fill out'
  the boundary of a fractal shape, like in the previous examples, but instead coalesce into the boundaries of loops.
\end{ex}

We end by describing a related family of polynomials. The recursion \cref{eq:recrelation} induces a dynamical system on a field $ k $, namely $ f : k^3 \to k^3 $ by
\begin{displaymath}
  f(x^1,x^2,x^3) = (x^2,x^3, 8 - x^1 - x^2 x^3).
\end{displaymath}

By replacing each $ x_i $ with $ y_i \coloneq x_i-2 $ (since $2$ is a fixed point), we may replace \cref{eq:recrelation} with the homogeneous relation
\begin{equation}\label{eq:homrel}
  y_i = -y_{i-3} - y_{i-2} y_{i-1} - 2(y_{i-2}+y_{i-1});
\end{equation}
with the initial conditions $ (y_1,y_2,y_3) = (-z,z,z^2) $, the result is a sequence of \textbf{reduced Farey polynomials} $ \phi_{p/q} := \Phi^{\infty,\infty}_{p/q} - 2 $
which we studied in \autocite{ems21} (in that paper, these polynomials were called $ Q_{p/q} $). Even a few minutes spent looking at these polynomials computationally shows
that they have a much nicer factorisation structure than the Farey polynomials, and characterising this factorisation structure precisely seems to be an interesting project.

For the case of two parabolic generators, one can predict a precise form for the reduced Farey polynomial in terms of the Riley polynomial (see remark on page \pageref{rem:riley}), which has an easier recursion rule: Chesebro \autocite[Section 5]{chesebro19} has introduced extensive vocabulary to deal with recursions of this type. Indeed, as explained in \autocite[Section 2.1]{akiyoshi}, the once-punctured torus $ T_* $ and the four-times punctured sphere are finitely covered by the $(0;2,2,2,\infty)$-orbifold $ O $: the diagram $ T_* \to O \leftarrow S_{0,4} $ is known as the Fricke diagram \autocite{sheingorn85}. By lifting a representation $\pi_1(T_*)\to \mathrm{PSL}(2,\C)$ to $\pi_1(O)$ and then descending it to $\pi_1(S_{0,4})$ one obtains the relationship between Farey and Riley polynomials (up to a factor, the reduced Farey polynomial $ \phi_{p/q} $ is a square root of the Riley polynomial $ \Lambda_{p/q} $). It would be an interesting extension of this work to give a similar proof for the case of elliptic generators: we have significant computational evidence suggesting that elliptic Farey polynomials also have a `square root'. There has been extensive work on the algebraic and number-theoretic interpretations of these polynomials in the punctured torus case---see for instance Riley \autocite{riley72,riley75}, Series \autocite{series85}, and Bowditch \autocite{bowditch98}---and we believe that trying to find versions of this theory which allow for torsion groups may have similarly pretty connections with other branches of mathematics.

\sloppy
\printbibliography

\end{document}